\newtheorem{thm}{Theorem}[section]
\newtheorem{lem}{Lemma}[section]
\newtheorem{cor}{Corollary}[section]
\newtheorem{prop}{Proposition}[section]
\newcommand{\R}{\mathbb{R}}
\newcommand{\Z}{\mathbb{Z}}
\def\XXint#1#2#3{{\setbox0=\hbox{$#1{#2#3}{\int}$ }
\vcenter{\hbox{$#2#3$ }}\kern-.6\wd0}}
  \let\div\relax
  \DeclareMathOperator{\div}{div}
\newcommand{\MSonehalfspacing}{%
  \setstretch{1.44}
  \ifcase \@ptsize \relax 
    \setstretch {1.448}%
  \or 
    \setstretch {1.399}%
  \or 
    \setstretch {1.433}%
  \fi
}
\newcommand{\MSdoublespacing}{%
  \setstretch {1.92}
  \ifcase \@ptsize \relax 
    \setstretch {1.936}%
  \or 
    \setstretch {1.866}%
  \or 
    \setstretch {1.902}%
  \fi
}
\begin{document}

	\title{Existence, Uniqueness and Regularity of the Fractional Harmonic Gradient Flow in General Target Manifolds}
	
	\author{Jerome Wettstein}
\maketitle
\date{ }
\begin{abstract}
In this paper, we continue to study the fractional harmonic gradient flow on $S^1$ taking values in a general closed manifold $N \subset \R^n$, addressing global existence and uniqueness of solutions of energy class with sufficiently small energy, adding to the existing body of knowledge pertaining to the half-harmonic gradient flow and expanding upon our previous work in \cite{wettstein}. We extend the techniques by Struwe in \cite{struwe} and Rivi\`ere in \cite{riv} to the non-local framework analogous to \cite{wettstein} to derive uniqueness, employ commutator estimates as in \cite{daliopigati} for regularity and follow \cite{struwe} for a general existence result.

\end{abstract}

\medskip

\tableofcontents

	\section{Introduction}
	
	The goal of this paper is to expand upon the findings of the author's previous work in \cite{wettstein}, where the half-harmonic gradient flow with values in $S^{n-1}$ was studied. More precisely, the following result was proven:
	\begin{thm}
	\label{mainresultsection1sn}
		Let $u_0 \in H^{1/2}(S^1;S^{n-1})$ be any initial data. There exists $\varepsilon > 0$, such that if:
		$$\| ( - \Delta)^{1/4} u_0 \|_{L^2(S^1)} \leq \varepsilon,$$
		then there exists a unique energy class solution $u: \R_{+} \times S^1 \to S^{n-1} \subset \R^n$ of the weak fractional harmonic gradient flow:
		$$u_t + (-\Delta)^{1/2} u = u | d_{1/2} u |^2,$$
		satisfying $u(0, \cdot) = u_0$  in the sense $u(t,\cdot) \to u_0$ in $L^2$, as $t \to 0$. Moreover, the solution fulfills the energy decay estimate:
		$$\| (-\Delta)^{1/4} u(t) \|_{L^2 (S^1)} \leq \| (-\Delta)^{1/4} u_0 \|_{L^{2}(S^1)}.$$
		In fact, $u \in C^{\infty}(]0,\infty[ \times S^1)$ and for an appropriate subsequence $t_k \to \infty$, the sequence $u(t_k)$ converges weakly in $H^{1}(S^1)$ to a point.
	\end{thm}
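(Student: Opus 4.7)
The plan is to adapt Struwe's scheme from \cite{struwe} for the classical harmonic map flow to the non-local framework, combining a penalized approximation with the commutator technology for three-term commutators developed in \cite{daliopigati}. The critical scaling of the nonlinearity $u|d_{1/2}u|^2$ relative to the principal part $\partial_t + (-\Delta)^{1/2}$ means that the smallness assumption on $\|(-\Delta)^{1/4} u_0\|_{L^2}$ is not merely technical: it makes the nonlinear term subcritical in the sense of a perturbation argument and drives an $\varepsilon$-regularity principle at every positive time.

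For existence, I would regularize the target-constraint via a Ginzburg-Landau potential, obtaining a sequence $u_\varepsilon$ of smooth solutions to the penalized flow on a short time interval via a contraction mapping in a parabolic space of $L^\infty_t H^{1/2}_x \cap L^2_t H^1_x$-type. The energy identity
\begin{equation*}
\frac{d}{dt}\,\frac{1}{2}\|(-\Delta)^{1/4} u(t)\|_{L^2(S^1)}^2 = -\|u_t(t)\|_{L^2(S^1)}^2
\end{equation*}
propagates the smallness forward in time and yields the integrated bound $\int_0^\infty \|u_t(t)\|_{L^2}^2\,dt \leq \tfrac12\|(-\Delta)^{1/4} u_0\|_{L^2}^2$. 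Combined with $\varepsilon$-regularity in the spirit of \cite{riv} adapted to the fractional setting, this allows passage to the limit $\varepsilon \to 0$ along a subsequence and produces a global energy-class solution satisfying the decay estimate.

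The uniqueness part is where the analysis becomes delicate and constitutes the main obstacle. Taking the difference $w = u - v$ of two energy-class solutions with the same initial datum, one must estimate the difference of the critical Schr\"odinger-type nonlinearities $u|d_{1/2}u|^2 - v|d_{1/2}v|^2$. Following Rivi\`ere's antisymmetric rewriting transported to the half-Laplacian and implemented in the author's prior work \cite{wettstein}, the strategy is to exhibit a commutator structure hidden in the right-hand side that gains integrability on the level of the difference $w$, and then close a Gr\"onwall-type inequality for a suitable norm of $w$ on short intervals, iterating to the whole time axis. The analogous elliptic commutator calculations of \cite{riv,daliopigati} must here be upgraded to the parabolic fractional setting, which is the genuinely new input.

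Regularity and the asymptotic statement then follow by bootstrap. Once smallness of the energy is available globally, the Da Lio-Pigati commutator estimates upgrade $H^{1/2}$-regularity in space step by step, and a standard parabolic iteration promotes the solution to $C^\infty(]0,\infty[ \times S^1)$. For the long-time behaviour, the time-integrability of $\|u_t(\cdot)\|_{L^2}^2$ forces $u_t(t_k) \to 0$ in $L^2$ along some sequence $t_k \to \infty$, so the limiting equation becomes $(-\Delta)^{1/2} u_\infty = u_\infty|d_{1/2} u_\infty|^2$ for the weak limit $u_\infty$; by the gap/classification theorem for small-energy half-harmonic maps $S^1 \to S^{n-1}$, $u_\infty$ must be constant, which is precisely the claim that $u(t_k)$ converges weakly to a point in $H^1(S^1)$.
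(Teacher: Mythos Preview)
Your overall architecture is sound, but two points deserve comment.

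\textbf{Existence.} Your Ginzburg--Landau penalization route is a genuine alternative to what the paper (via \cite{wettstein}) actually does. There, existence is obtained by an inverse function theorem argument in Banach spaces \`a la Hamilton \cite{hamilton}: one first solves the flow for smooth initial data by linearizing around the free fractional heat flow and showing the linearized operator is Fredholm of index zero with trivial kernel (the latter via a maximum-principle argument for $|u-\pi(u)|^2$, which simultaneously forces the solution to stay in the target). General $H^{1/2}$ initial data are then handled by approximation combined with Struwe-type local energy estimates. Your penalization approach should also work, but the step ``passage to the limit $\varepsilon\to 0$'' hides the nontrivial issue of recovering the constraint $|u|=1$ in the limit and of ruling out concentration; the paper's route sidesteps this by building the constraint into the nonlinearity from the start and verifying $u\in N$ a posteriori via a maximum principle.

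\textbf{Uniqueness.} Here your plan is slightly off structurally. You propose to exhibit a commutator structure directly in the difference equation for $w=u-v$ and then close a Gr\"onwall inequality, saying the ``elliptic commutator calculations of \cite{riv,daliopigati} must here be upgraded to the parabolic fractional setting.'' That is not how the argument runs. The commutator/antisymmetric-potential machinery is applied \emph{elliptically, at each fixed time}, to the equation $(-\Delta)^{1/2}u(t)=u|d_{1/2}u|^2-u_t(t)$ with $u_t(t)\in L^2$ treated as a forcing term; this yields the regularity gain $u(t)\in \dot F^{1/2}_{p,2}$ for all $p\ge 2$, and in particular $u\in L^2_{loc}(\R_+;H^1(S^1))$ once the small-energy absorption is performed. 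Only \emph{after} this improved regularity is in hand does one estimate the difference $w$, and at that stage no compensation is needed: a direct energy estimate plus the Ladyzhenskaya-type interpolation $\|(-\Delta)^{1/4}u\|_{L^4}^4\lesssim \|u\|_{\dot H^{1/2}}^2\|u\|_{\dot H^1}^2$ suffices to close Gr\"onwall. So the ``genuinely new input'' is not a parabolic commutator estimate but rather the observation that the elliptic regularity lemma applies time-slice by time-slice with $f=-u_t$.

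Your treatment of the energy identity and the asymptotic convergence is correct and matches the paper.
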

	Let us pause here for a moment and briefly recall what (fractional) harmonic maps are: Harmonic maps are the critical points of the following, nowadays quite standard Dirichlet energy which is given for all maps $u: M \to N \subset \R^n$ in $H^{1}(M;N)$ by:
	$$E(u) := \frac{1}{2} \int_{M} g^{\alpha \beta}(x) \gamma_{ij}(u(x)) \frac{\partial u^{i}}{\partial x_{\alpha}}(x) \frac{\partial u^{j}}{\partial x_{\beta}}(x) dx,$$ 
	where $(M,g), (N, \gamma)$ smooth Riemannian manifolds, $u = (u^{1}, \ldots, u^{n})$ and employing Einstein's summation convention. In case $M = \Omega \subset \R^m$ and $N \subset \R^{n}$ are isometrically embedded in $\R^m$ and $\R^n$ and equipped with the Riemannian metrics induced by the standard scalar product, this reduces to:
	$$E(u) = \frac{1}{2} \int_{\Omega} | \nabla u |^2 dx$$
	One naturally is lead to the following extension: We say that a map $u: S^1 \to N \subset \R^n$ is weakly $1/2$-harmonic, if it is a critical point of the following energy:
	\begin{equation}
	\label{halfenergysect1}
		E_{1/2}(u) := \frac{1}{2} \int_{S^1} | (-\Delta)^{1/4} u |^2 dx,
	\end{equation}
	with respect to variations in the following set:
	$$H^{1/2}(S^1;N) := \big{\{} v \in H^{1/2}(S^1; \mathbb{R}^{n})\ \big{|}\ u(x) \in N, \text{ for a.e. } x \in S^1 \big{\}}$$
	Namely, the criticality condition means that for every $\Phi \in \dot{H}^{1/2}(S^1;\mathbb{R}^{n}) \cap L^{\infty}(S^1)$, in particular all smooth $\Phi \in C^{\infty}(S^1;\R^n)$, we have:
	\begin{equation}
		\frac{d}{dt} E_{1/2} \left( \pi( u + t \Phi ) \right) \Big{|}_{t = 0} = 0,
	\end{equation}
	where $\pi$ is the orthogonal closest-point projection to $N$, which is defined in a sufficiently small neighbourhood of $N$ and smooth due to $N$ being smooth. As we shall see, this condition is equivalent to:
	\begin{equation}
	\label{1/2harmonicbyorthogpojection}
		d\pi(u) (-\Delta)^{1/2} u = 0 \quad \text{ in } \mathcal{D}'(S^1),
	\end{equation}
	which is sometimes also stated informally in the following form, observing that $d\pi(x)$ is the orthogonal projection to $T_{x} N$ for every $x \in N$:
	$$(-\Delta)^{1/2} u \perp T_{u}N$$
	It is clear that, in order to study the regularity of $1/2$-harmonic maps, the first step lies in the reformulation of \eqref{1/2harmonicbyorthogpojection}. Naturally, corresponding definitions for $\R$ instead of $S^1$ are possible. Such equations were first studied in \cite{dalioriv} and questions regarding regularity, bubbling and general properties of such maps have been adressed in the literature, see \cite{daliocomment1}, \cite{schikorradaliocomment}, \cite{dalioschikorra}, \cite{daliocomment2}, \cite{daliopigati} and \cite{daliolaurainriv}. $1/2$-harmonic maps are for example linked to free-boundaries of minimal surfaces.\\
	
	In this paper, we will study the associated evolution problem with the energy \eqref{halfenergysect1} for arbitrary closed manifolds $N \subset \mathbb{R}^n$. We shall see that this equation could be phrased as:
	\begin{equation}
	\label{maineqsect1v1}
		u_{t} + (-\Delta)^{1/2} u = (Id - d\pi(u)) (-\Delta)^{1/2} u,
	\end{equation}
	or:
	\begin{equation}
	\label{maineqsect1v2}
		u_t + (-\Delta)^{1/2} u = d_{1/2} u \cdot d_{1/2} \left( d\pi^{\perp}(u) \right) + \div_{1/2} \left( \frac{A^{i}_{u}(du, du)(x,y)}{| x-y |^{1/2}} d\pi^{\perp}(u(y))_{ij} \right),
	\end{equation}
	for $u$ being a function assuming values a.e. in $N$ with appropriate initial condition $u(0) = u_0$ with values in $N$. Other types of reformulations are possible and will appear later on. Similar problems in the local setup have been studied in \cite{struwe1} and \cite{struwe2} for the evolution problems associated with the harmonic map equation and found existence in an appropriate sense. However, the case of weak solutions was resolved by \cite{riv} in the case of small energy by means of uniqueness and \cite{freire} later on in general. As in \cite{wettstein}, we will focus here on the case of small energy for solutions in the weakest sense and study a class of general solutions for arbitrary energy much like in \cite{struwe1}. More precisely, we shall prove:
	
	\begin{thm}
	\label{mainresultsection1}
		Let $u_0 \in H^{1/2}(S^1;N)$ be any initial datum and $N$ be any closed manifold. There exists $\varepsilon > 0$, such that if:
		$$\| ( - \Delta)^{1/4} u_0 \|_{L^2(S^1)} \leq \varepsilon,$$
		then there exists a unique energy class solution $u: \R_{+} \times S^1 \to N \subset \R^n$ of the weak fractional harmonic gradient flow \eqref{maineqsect1v1}, \eqref{maineqsect1v2} satisfying $u(0, \cdot) = u_0$  in the sense $u(t,\cdot) \to u_0$ in $L^2$, as $t \to 0$. Moreover, the solution fulfills the energy decay estimate:
		$$\| (-\Delta)^{1/4} u(t) \|_{L^2 (S^1)} \leq \| (-\Delta)^{1/4} u_0 \|_{L^{2}(S^1)}.$$
		In fact, $u \in C^{\infty}(]0,\infty[ \times S^1)$ and for an appropriate subsequence $t_k \to \infty$, the sequence $u(t_k)$ converges weakly in $H^{1}(S^1)$ to a point. Without the small energy assumption, a unique solution $u \in C^{\infty}(]0,T[ \times S^1) \cap H^{1}([0,T] \times S^1; N)$ with non-increasing energy exists up to some time $T$ that can be bounded from below by the initial energy $\| (-\Delta)^{1/4} u_0 \|_{L^{2}(S^1)}$.
	\end{thm}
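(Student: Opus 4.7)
The plan is to follow the scheme of \cite{wettstein}, proved there for $N = S^{n-1}$, but with algebraic identities such as $|u|^2 \equiv 1$ replaced by arguments that treat $d\pi(u)$ as a general bounded matrix field and exploit the divergence-form reformulation \eqref{maineqsect1v2}. The four ingredients are: small-energy existence via Duhamel iteration with the half-heat semigroup $e^{-t(-\Delta)^{1/2}}$; uniqueness by a Rivi\`ere-type argument on the equation satisfied by the difference of two solutions; smoothness via iterated Da Lio--Pigati commutator estimates; and a Struwe-type continuation argument for arbitrary initial data.

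For the small-energy part, I would work on $[0,T] \times S^1$ in the space $L^\infty_t H^{1/2}_x \cap L^2_t H^1_x$, which plays the same critical role for the half-heat flow as $L^\infty_t H^1_x \cap L^2_t H^2_x$ does for the two-dimensional harmonic-map heat flow. Applying Duhamel's formula to \eqref{maineqsect1v2} and estimating the right-hand side via the paraproduct and fractional-divergence commutator identities of \cite{daliopigati}, one sets up a contraction on a small ball in this space provided $\|(-\Delta)^{1/4} u_0\|_{L^2}$ lies below some $\varepsilon > 0$. The fixed point yields existence, strong convergence $u(t) \to u_0$ in $L^2$ as $t \to 0$, uniqueness within the contraction class, and the fact that $u$ actually takes values in $N$ by combining the equation with the structure of $d\pi$.

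To extend uniqueness to the full class of energy-class weak solutions, I subtract the equations for two solutions $u_1$ and $u_2$, test against a suitable tangential variation, and run a Gronwall estimate on $\|(-\Delta)^{1/4}(u_1 - u_2)\|_{L^2}$ in the spirit of \cite{riv}, using the non-local analogues from \cite{wettstein}. Spatial smoothness for $t > 0$ is obtained by reading the equation as $(-\Delta)^{1/2} u = -u_t + \text{nonlinear terms}$ and bootstrapping through the commutator estimates, gaining a fractional derivative at each iteration; time regularity then follows by differentiating the equation in $t$. The monotonicity $\tfrac{d}{dt} E_{1/2}(u) + \|u_t\|_{L^2}^2 \leq 0$ comes from pairing \eqref{maineqsect1v1} with $u_t$, using that $u_t \in T_{u} N$ almost everywhere, so the correction term $(Id - d\pi(u))(-\Delta)^{1/2} u$ is orthogonal to $u_t$. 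The energy bound together with $\int_0^\infty \|u_t\|_{L^2}^2\,dt < \infty$ then produces a subsequence $t_k \to \infty$ along which $u(t_k)$ converges weakly in $H^1$ to a constant map.

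The large-energy statement is handled by a Struwe-type continuation: starting from $u_0$ one constructs a smooth solution on a maximal interval $[0,T)$, and a localized version of the small-energy estimate shows that if $T$ is finite, then the $H^{1/2}$-energy must concentrate at some point of $S^1$ as $t \uparrow T$, which produces the quantitative lower bound on $T$ in terms of $\|(-\Delta)^{1/4} u_0\|_{L^2}$. The main obstacle throughout is the interaction between non-locality and the absence of pointwise algebraic relations for $u$: in the sphere case the identity $u \cdot u_t = 0$ instantly eliminates several otherwise dangerous terms, whereas here the commutator and paraproduct machinery of \cite{daliopigati} has to carry the full weight of the estimates, and verifying that these estimates remain valid for the matrix-valued coefficient $d\pi^\perp(u)$ is what makes the general-target case substantially more delicate than the sphere setting treated in \cite{wettstein}.
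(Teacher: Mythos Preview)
Your overall architecture is reasonable, but the existence step diverges substantially from the paper and leaves a real gap. The paper does \emph{not} run a Duhamel contraction in $L^\infty_t H^{1/2}_x \cap L^2_t H^1_x$; instead it follows Hamilton and applies the inverse function theorem in $W^{1,p}([0,T]\times S^1)$ to the operator $H(v) = (\tilde u+v)_t + (-\Delta)^{1/2}(\tilde u+v) - C(d\pi(\tilde u+v),\tilde u+v)$, where $\tilde u$ solves the linear half-heat equation with smooth data and $C(a,b) = \mathcal{R}(a\nabla b) - a(-\Delta)^{1/2}b$. The linearization is shown to be Fredholm of index zero (invertible part plus compact), and injectivity is obtained by deriving a fractional heat inequality for $|h|^2$ and applying a maximum principle. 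General $H^{1/2}$ data are then reached by approximation with smooth data and Struwe-type local energy estimates. Your contraction scheme might be made to work, but you have not addressed the point that makes it delicate: the equation is critical, so a naive Duhamel estimate on the quadratic nonlinearity in $L^\infty_t H^{1/2}_x$ does not close without exactly the kind of commutator gain the paper extracts via Proposition~\ref{lemmae.2daliopigati}.

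More seriously, your sentence ``the fact that $u$ actually takes values in $N$ by combining the equation with the structure of $d\pi$'' hides the main new idea of the existence section. The paper derives a fractional heat inequality $\partial_t|u-\pi(u)|^2 + (-\Delta)^{1/2}|u-\pi(u)|^2 \le C|u-\pi(u)|^2$ from the specific form $r(u) = (-\Delta)^{1/2}\pi(u) - d\pi(u)(-\Delta)^{1/2}u$ of the nonlinearity, using that $u-\pi(u) \perp T_{\pi(u)}N$ and hence $d\pi(\pi(u))(u-\pi(u))=0$; the maximum principle then forces $u\in N$. In a Duhamel iteration the iterates do not lie in $N$, so you need either this argument or a projection step, neither of which you supply.

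On uniqueness, the paper's structure is also different from what you sketch. It does \emph{not} test against a tangential variation or run Gronwall on $\|(-\Delta)^{1/4}(u_1-u_2)\|_{L^2}$; it tests the difference equation against $w=u-v$ itself and controls $\|w(T)\|_{L^2}^2 + \int_0^T\|(-\Delta)^{1/4}w\|_{L^2}^2$ via an $L^4$-type estimate (Theorem~\ref{uniquegeneral}). The crucial step you underplay is that this argument requires $u,v\in L^2_{loc}(\R_+;H^1)$, which is \emph{not} part of the energy-class assumption. The paper closes this gap in two stages: Lemma~\ref{casepe2} uses the Da~Lio--Pigati Morrey/stereographic machinery to push an energy-class solution from $\dot H^{1/2}$ into $\dot F^{1/2}_{p,2}$ for some $p>2$, and Lemma~\ref{casepg2} then bootstraps to all $p\ge 2$ by inverting a perturbation of the identity on $\dot F^{1/2}_{p,2}$. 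Only after this does the small-energy absorption yield $u\in L^2_{loc}H^1$ and the uniqueness argument applies. Your proposal invokes Da~Lio--Pigati only for smoothness, not for this regularity upgrade that is the linchpin of uniqueness in the energy class.
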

	
	It would be interesting to study the behaviour of solutions to the half-harmonic gradient flow with initial datum with high energy and see what happens. In particular, it would be worth investigating blow-ups of the solution in finite time. If no blow-ups exist, then one may argue as in \cite{struwe1} to extend solutions to arbitrary times, i.e. global smooth existence would be proven for all initial data, with uniqueness of the solution among all that have non-increasing energy. 
	
	The key techniques used will be very similar to \cite{wettstein} and we refer to this paper and, in particular, the introduction there for some more details on the techniques used. To briefly summarise, existence is obtained along a quite standard argument involving the inverse function theorem in Banach spaces. The most interesting point in the argument involves a bootstrap argument based on commutator estimates from \cite{daliopigati} and regularity results as in \cite{hieber}. A crucial step is the investigation of the kind of fractional heat equations solved by the difference between a candidate for a solution of the half harmonic gradient flow and its projection onto $N$, which ultimately allows us to prove that the candidate $u$ indeed assumes values in $N$. Uniqueness follows similar to \cite{struwe}, using ideas and reformulations from \cite{mazoschi} based on \eqref{maineqsect1v2} and arguments based on \cite{riv} to treat the energy class case with small energy by some compensation phenomenon. In fact, the compensation that occurs is due to an anti-symmetric potential and based on estimates found in \cite{daliopigati}. Lastly, the convergence result is an immediate adaption of \cite{struwe1}, as has previously been seen in \cite{wettstein}.\\
	
	The paper is organised as follows: In Section 2, we discuss and introduce some of the key notions for our later arguments. Section 3 starts our investigation of the fractional harmonic gradient flow in the case of $N$ being a closed, orientable hypersurface. The formula we find is reminiscent of the one in \cite{mazoschi} and \cite{wettstein} and emphasises the increased technical difficulty of dealing with general $N$. In Section 4, we finally turn to arbitrary closed manifolds $N$, first investigating the different formulations of the fractional harmonic gradient flow in Section 4.1. Then, we prove uniqueness of solutions under varying assumptions in Sections 4.2 by following \cite{struwe1} and \cite{riv}. Next, in Section 4.3, we deal with local existence for smooth boundary data using ideas similar to \cite{hamilton} and use estimates as in \cite{struwe1} to deduce local existence and global existence for small initial energy as in \cite{wettstein}. Indeed, the differences in the proofs in \cite{wettstein} and the current paper are minor, as the technique relies on general properties of the non-linearity (quadratic growth in an appropriate sense, orthogonality to the tangent space of $N$, etc.). Convergence results as $t \to \infty$ are discussed in Section 4.4 and Section 4.5 studies the behaviour of the solution close to points of concentration of energy. The appendices complement the presentation.\\
	
	\textbf{Acknowledgements}
	I would like to thank my supervisor, Prof. Francesca Da Lio, for suggesting this problem, providing helpful insights throughout the process of working on this paper and their feedback on previous versions of this paper.

	\section{Preliminaries}
	
	Before we enter our discussion of the main result of this paper, we recall some notions from non-local analysis. In particular, we present the definition of the Triebel-Lizorkin spaces on the unit circle, give an equivalent characterisation under some technical assumptions as in \cite{schiwang} and define the fractional gradient and fractional divergence that will appear later on, together with some useful identities.
	
	\subsection{Fractional Laplacian and Triebel-Lizorkin Spaces}
	
	In this section, we recall the definition of the Triebel-Lizorkin spaces on the unit circle $S^1 \subset \R^2$ as well as some of the most relevant properties of the fractional Laplacian, at least for our purposes. The current presentation follows the one in \cite{schiwang} and \cite{schmeitrieb}.\\
	
	As already seen in \cite{wettstein}, we have a natural metric on $S^1$ stemming from the identification $S^1 \simeq \R / 2 \pi \Z$, providing a useful formula for the metric on the universal covering of $S^1$. The natural distance function given by:
	$$| x-y |^2 = | e^{ix} - e^{iy} |^2 = | e^{i(x-y)} - 1 |^2,$$
	which can be rewritten as:
	\begin{equation}
		| x-y | = 2 \left| \sin \left( \frac{x-y}{2} \right) \right|.
	\end{equation}
	We shall implicitly use this metric, whenever we are working over $S^1$, without emphasizing this fact further. Next, we define for any $f: S^1 \to \R$:
	$$\mathcal{D}_{s,q}(f)(x) := \left( \int_{S^1} \frac{| f(x) - f(y) |^q}{| x-y |^{sq}} \frac{dy}{| x-y |} \right)^{1/q},$$
	for all $1 \leq q < \infty$ and $0 < s < 1$. This results in the following definition as seen previously in \cite{schiwang}:
	$$\| f \|_{\dot{W}^{s,(p,q)}(S^1)} := \| \mathcal{D}_{s,q}(f)(x) \|_{L^{p}(S^1)},$$
	for every $1 \leq p \leq \infty$. If $p = q$, these spaces correspond to the usual homogeneous Gagliardo-Sobolev spaces $\dot{W}^{s,p}(S^1)$. The operator $\mathcal{D}_{s,q}$ and its main properties are studied in \cite{schiwang} and the references therein.\\
	
	As per usual, one denotes by $\mathcal{D}'(S^1)$ the collection of distributions on $S^1$ and sometimes denote, for notational convenience, by $\mathcal{D}(S^1)$ the space $C^{\infty}(S^1)$ of smooth functions (the collection of test functions). $\hat{f}(k)$ will always denote the $k$-th Fourier coefficient of $f$, for all $f \in \mathcal{D}'(S^1)$ and $k \in \mathbb{Z}$:
	$$\hat{f}(k) := \frac{1}{2\pi} \langle f, e^{-ikx} \rangle = \frac{1}{2\pi} f \left( e^{-ikx} \right), \quad \forall k \in \Z$$
	Completely analogous to the situation on $\R^n$, the Triebel-Lizorkin spaces for $S^1$, denoted by ${F}^{s}_{p,q}(S^1)$, are defined for all $s \in \R$, $p,q \in [1, \infty[$ by the following identity:
	$$F^{s}_{p,q}(S^1) := \big{\{} f \in \mathcal{D}'(S^1) \ \big{|}\ \| f \|_{F^{s}_{p,q}} < +\infty \big{\}}$$
	Here we employ the norm defined below, analogous to the construction of function spaces on $\R^n$:
	\begin{equation}
	\label{triebels1norm}
		\| f \|_{F^{s}_{p,q}} := \Bigg{\|} \Bigg{\|} \left( \sum_{k \in \mathbb{Z}} 2^{js} \varphi_{j}(k) \hat{f}(k) e^{ikx} \right)_{j \in \mathbb{N}} \Bigg{\|}_{l^{q}} \Bigg{\|}_{L^{p}(S^1)},
	\end{equation}
	for an appropriate partition of unity $(\varphi_{j})_{j \in \mathbb{N}}$ consisting of smooth, compactly supported functions on $\R$ satisfying:
	$$\operatorname{supp} \varphi_{0} \subset B_{2}(0), \quad \operatorname{supp} \varphi_{j} \subset \{ x \in \R\ |\ 2^{j-1} \leq | x | \leq 2^{j+1} \}, \forall j \geq 1$$
	as well as the boundedness property:
	$$\forall k \in \mathbb{N}: \sup_{j \in \mathbb{N}} 2^{jk} \| D^{k} \varphi_j \|_{L^{\infty}} \lesssim 1$$
	Such a family of functions can be easily constructed by the usual methods for Littlewood-Paley decompositions involving scalings. The Triebel-Lizorkin spaces on $S^1$, and more generally on the $n$-torus, possess a theory analogous to the classical case of function spaces on $\R^n$, see \cite{schmeitrieb}, Chapter 3. In particular, Sobolev embeddings continue to hold (\cite[Section 3.5.5]{schmeitrieb}), identifications with classical spaces such as $L^{p}(S^1)$ (\cite[Section 3.5.4]{schmeitrieb}) and duality results (\cite[Section 3.5.6]{schmeitrieb}). We shall use the properties of these spaces throughout this paper and shall refer to the given reference for details. The homogeneous spaces is now defined by omitting the Fourier coefficient of $0$th-order and adapting the notions accordingly.\\
	
	In our later considerations, it will be most convenient to be able to work with norms different from, but equivalent to \eqref{triebels1norm}. The reason lies in the technical nature of the norm \eqref{triebels1norm} which we shall not see explicitely emerge from the structure of the fractional gradient flow, but rather a different incarnation. More precisely, in \cite{schiwang}, the authors prove the following result:
	
	\begin{thm}[Theorem 1.4, \cite{schiwang}]
	\label{schiwangthm1.4}
		Let $s \in (0,1)$, $p,q \in ]1, \infty[$ and $f \in L^p(\R^n)$. Then:
		\begin{itemize}
			\item[(i)] We know $\dot{W}^{s, (p,q)}(\R^n) \subset \dot{F}^{s}_{p,q}(\R^n)$ together with:
			\begin{equation}
				\| f \|_{\dot{F}^{s}_{p,q}(\R^n)} \lesssim \| f \|_{\dot{W}^{s, (p,q)}(\R^n)}
			\end{equation}
			
			\item[(ii)] If $p > \frac{nq}{n + sq}$, then we also have the converse inclusion together with:
			\begin{equation}
			\label{secondpartofthm2.1}
				\| f \|_{\dot{W}^{s, (p,q)}(\R^n)} \lesssim \| f \|_{\dot{F}^{s}_{p,q}(\R^n)}
			\end{equation}
		\end{itemize}
		The constants depend on $s, p, q, n$.
	\end{thm}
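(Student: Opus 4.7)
The plan is to treat the two inclusions separately, with part (i) being essentially soft and the difficulty concentrated in part (ii).

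For (i), I would exploit the mean-zero property of the Littlewood-Paley kernel $\varphi_j$ (for $j\ge 1$) to rewrite the block $\Delta_j f(x)=\int \varphi_j(x-y)[f(y)-f(x)]\,dy$. Since $\varphi_j$ is essentially supported in the annulus $|x-y|\sim 2^{-j}$ with $\|\varphi_j\|_\infty\lesssim 2^{jn}$, H\"older's inequality on this annulus against the weight $|x-y|^{-n-sq}$ yields the pointwise bound
\begin{equation*}
2^{jsq}|\Delta_j f(x)|^q \lesssim \int_{|x-y|\sim 2^{-j}}\frac{|f(x)-f(y)|^q}{|x-y|^{n+sq}}\,dy.
\end{equation*}
Summing over $j$, the dyadic annuli nearly partition $\R^n$, so $\sum_j 2^{jsq}|\Delta_j f(x)|^q\lesssim \mathcal{D}_{s,q}(f)(x)^q$; a final $L^p$-norm then establishes (i) with no additional restriction on $p,q,s$.

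For (ii), I would start from the telescoping decomposition $f=\sum_j\Delta_j f$ and, for $|x-y|\sim 2^{-k}$, estimate each $\Delta_j f(x)-\Delta_j f(y)$ in two regimes. For low frequencies $j\le k$ I would use the Bernstein-type mean-value estimate $|\Delta_j f(x)-\Delta_j f(y)|\lesssim 2^{j-k}\mathcal{P}_j f(x)$, where $\mathcal{P}_j f(x)=\sup_y |\Delta_j f(y)|(1+2^j|x-y|)^{-n/r}$ is the Peetre maximal operator with parameter $r>0$. For high frequencies $j>k$ I would bound $|\Delta_j f(x)-\Delta_j f(y)|$ either directly through $\mathcal{P}_j$ or, sharper, by controlling the annular $L^q$-average of $|\Delta_j f|$ via a Bernstein-type estimate for band-limited functions. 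Dividing by $|x-y|^s\sim 2^{-ks}$ and inserting the annular decomposition of the integral defining $\mathcal{D}_{s,q}(f)(x)^q$ converts the whole estimate into a discrete double sum over $(j,k)$ whose kernel decays geometrically in $|j-k|$; a Young-type convolution bound in $\ell^q$ then collapses the sum in $k$ to
\begin{equation*}
\mathcal{D}_{s,q}(f)(x)^q \lesssim \sum_j 2^{jsq}\,\mathcal{P}_j f(x)^q.
\end{equation*}
Taking $L^p$-norms, using the pointwise domination $\mathcal{P}_j f\lesssim (\mathcal{M}|\Delta_j f|^r)^{1/r}$, and finally invoking the Fefferman-Stein vector-valued maximal inequality in $L^{p/r}(\ell^{q/r})$ for a suitable $r<\min(p,q)$ produces $\|f\|_{\dot{F}^{s}_{p,q}}$ on the right-hand side.

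The main obstacle is the interplay of the two conflicting constraints on $r$: the geometric decay of the kernel in the high-frequency regime forces $r$ to be not too small (roughly $r>n/s$ if one uses only the Peetre control), while Fefferman-Stein forces $r<\min(p,q)$. A naive combination would give the overly strong condition $\min(p,q)>n/s$ rather than the sharp threshold $p>nq/(n+sq)$ claimed in the statement. The resolution, and the technical heart of the proof, is to replace the crude Peetre bound in the high-frequency regime by a refined local $L^q$-average combined with a Bernstein/Sobolev inequality exploiting the frequency localisation of $\Delta_j f$; the sharp threshold $p>nq/(n+sq)$ then reveals itself as the Sobolev embedding condition $\tfrac{1}{p}-\tfrac{s}{n}<\tfrac{1}{q}$, which is exactly where the hypothesis enters.
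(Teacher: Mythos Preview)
The paper does not give its own proof of this statement: Theorem~\ref{schiwangthm1.4} is quoted verbatim from \cite{schiwang} and used as a black box, so there is no in-paper argument to compare against. Your sketch is therefore being measured against the original Schikorra--Wang proof rather than anything in this paper.

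That said, your outline is the standard and correct strategy for this kind of characterisation. For part (i) your argument is essentially right, but one imprecision is worth flagging: the convolution kernel $\check{\varphi}_j$ is a Schwartz function concentrated in the \emph{ball} $|x-y|\lesssim 2^{-j}$, not compactly supported in an annulus; to get the clean pointwise bound you wrote, you have to decompose $\R^n$ into dyadic shells $|x-y|\sim 2^{-\ell}$ for all $\ell\le j$ and use the rapid decay of $\check{\varphi}_j$ to sum the tail contributions. This is routine but should not be suppressed entirely. For part (ii) your diagnosis of the main obstacle is exactly right: a naive Peetre-maximal argument gives only $\min(p,q)>n/s$, and the sharp threshold $p>\frac{nq}{n+sq}$ requires, in the high-frequency regime $j>k$, replacing the pointwise Peetre bound by an $L^q$-average over the shell combined with a Bernstein/Plancherel--Polya inequality for band-limited functions. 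This is precisely the mechanism in \cite{schiwang}, so your approach matches the original.
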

	
	We mention that the spaces introduced before easily generalize to $\mathbb{R}^n$. Additionally, we refer to \cite{wettstein} for further details on references for this result.\\
	
	As seen in \cite{schiwang} and by using the properties in \cite{schmeitrieb}, \cite{triebel} for periodic functions, we can similarily discover the following equivalence with Triebel-Lizorkin spaces for all $1 < q < \infty$ and $1 < p < \infty$:
	\begin{equation}
		\dot{W}^{s, (p,q)}(S^1) = \dot{F}^{s}_{p,q}(S^1),
	\end{equation}
	with equivalence of the corresponding seminorms, provided $p > \frac{q}{1+sq}$. For a proof of a part of the result above in the case $S^1$, we refer to the Appendix in \cite{wettstein}. If $s = 1/2$ and $q = 2$, then $p > 1$ is the requirement in Theorem \ref{schiwangthm1.4} for the equality of $\dot{F}^{1/2}_{p,2}$ and $\dot{W}^{1/2, (p,2)}$ to hold. Moreover, if $q = 2$, an ubiquitous situation throughout this paper, the result surely applies for all $p \geq 2$. It should be observed that while $\dot{F}^{s}_{p,2}(S^1) \subset \dot{W}^{s,p}(S^1) = \dot{W}^{s,(p,p)}(S^1)$ for $p \geq 2$, there does not hold equality except for $p =2$. The reader is reminded of the difference between the Bessel potential spaces and the Gagliardo-Sobolev spaces, which is more or less the underlying statement of this inclusion. We will generally omit mentioning the domain, if it is clear from the context.\\
	
	On $S^1$, the fractional $s$-Laplacian is defined as a Fourier multiplier operating on Fourier series:
	$$\widehat{(-\Delta)^{s} f}(k) = | k |^{2s} \hat{f}(k),$$
	for every $k \in \mathbb{Z}$ and all $0 < s$. On the other hand, for $0 < s \leq 1$, this operator can be defined by a singular integral as well:
	$$(-\Delta)^{s} f(x) = C(s) \cdot P.V. \int_{S^{1}} \frac{f(x) - f(y)}{| x-y |^{1+2s}} dy,$$
	where $C(s) > 0$ denotes some constant depending on $s$. By the Fourier multiplier properties, fractional Laplacians interact in a natural way with Triebel-Lizorkin spaces $\dot{F}^{s}_{p,q}(S^1)$, as is usual for this type of function spaces. This means that it induces an isomorphism:
	$$(-\Delta)^{s}: \dot{F}^{t+2s}_{p,q} \to \dot{F}^{t}_{p,q},$$
	for all $p,q \in (1, \infty)$ and $t, t+2s \in \R$, see \cite[Section 3.6.3]{schmeitrieb} and the proof of the analogous statement in the case $\R^n$.\\
	
	In analogy, the $s$-Laplacian can be defined on $\R$ as a Fourier multiplier using the Fourier transform rather than the Fourier series and leads again to an object which can also be characterised by a similar principal value. We omit the details, as the formulas are virtually the same as for the circle.

	\subsection{Fractional Gradients and Divergences}
	
	For our later use, we summarise and collect some of the ideas in \cite{mazoschi}. Namely, we are most interested in the fractional gradient and fractional divergence and we recapitulate some of the notions, as was already done in \cite{wettstein}.\\
	
	We denote by $\mathcal{M}_{od}(\R \times \R)$ the set of all measurable functions $f: \R \times \R \to \R$ with respect to the measure $\frac{dx dy}{ | x-y |}$. One can make this definition equally well on $S^1$ by exchanging the domain $\R$ for the $S^1$ and using the metric previously mentioned. Whenever a definition/property applies for both domains, we shall sometimes denote this space by $\mathcal{M}_{od}$.\\
	
	For a measurable function $f: \R \to \R$ or $f: S^1 \to \R$, we define the \textit{fractional $s$-gradient} as follows:
	$$d_s f(x,y) = \frac{f(x) - f(y)}{| x-y |^s} \in \mathcal{M}_{od},$$
	for all $0 \leq s < 1$. The corresponding $s$-divergence is then introduced by means of duality. It should be clear, but is often useful to know that:
	$$d_{s} f(y,x) = - d_{s} f(x,y)$$
	As stated above, by duality, for $F \in \mathcal{M}_{od}(\R \times \R)$ or $F \in \mathcal{M}_{od}(S^1 \times S^1)$, we are consequently able to define for every $\varphi$ smooth and compactly supported on $\R$ or just smooth on $S^1$ in the latter case:
	$$\div_{s} F (\varphi) := \int \int F(x,y) d_{s} \varphi(x,y) \frac{dx dy}{| x-y |}$$
	This expression is hence defined merely in a distributional sense, i.e. by its duality relation with $d_{s}$. For later use, we generally denote for $F, G \in \mathcal{M}_{od}$ over $\R$ or $S^1$:
	$$F \cdot G (x) := \int F(x,y) G(x,y) \frac{dy}{| x-y |}$$
	As an obvious special case, if $F = G$ we also write:
	$$F \cdot F(x) = | F |^{2}(x) \Rightarrow | F |(x) := \sqrt{F \cdot F(x)}$$
	One should immediately notice the relationship between the previously defined norms on $W^{s,(p,q)}(S^1)$. Indeed, we have:
	$$\| | d_{s} f | \|_{L^{p}(S^1)} = \| f \|_{\dot{W}^{s,(p,2)}(S^1)}.$$
	This provides a powerful characterisation of Triebel-Lizorkin spaces $\dot{F}^{s}_{p,q}(S^1)$ in terms of the fractional gradients $d_{s}$, under certain special technical conditions on $s,p,q$.\\
	
	It is also possible to prove up to constants which we shall ignore, as they have no effect on the results:
	$$(- \Delta)^{s} f = \div_{s} d_s f,$$
	which is particularily useful for the weak formulation of PDEs involving non-local operators. This equation is to be understood in the following sense:
	$$C_s \int d_s f \cdot d_s g (x) dx = \int (-\Delta)^{s} f \cdot g dx = \int (-\Delta)^{s/2} f \cdot (-\Delta)^{s/2} g dx,$$
	for the domains $S^1$ and $\R$. Lastly, the following identity, sometimes referred to as fractional Leibniz' rule, is often useful:
	\begin{equation}
	\label{fractionalleibnizruleintrod}
		d_{s} \left( fg \right)(x,y) = d_{s} f(x,y) g(x) + f(y) d_{s} g(x,y).
	\end{equation}
	This identity can be verified by directly inserting the definition. Another type of Leibniz rule is summarised in the following formula:
	\begin{equation}
		(-\Delta)^{1/2} (fg) = (-\Delta)^{1/2} f \cdot g + f (-\Delta)^{1/2} g  - d_{1/2} f \cdot d_{1/2} g,
	\end{equation}
	which again can be verified by directly inserting definitions. This formula also accounts for the commutator behaviour. Therefore, the fractional gradient may be used to account for the error in the Leibniz rule for the fractional Laplacian and specifies the order of the error.\\
	
	In general, one defines $L^{p}_{od}(S^1 \times S^1)$ or $L^{p}_{od}(\R \times \R)$ as the set of all measurable functions, such that the following norm is finite:
	$$\| F \|_{L^{p}_{od}} := \left( \int \int | F(x,y) |^{p} \frac{dy dx}{| x-y |} \right)^{1/p},$$
	for $1 \leq p < \infty$. Obviously, $L^{\infty}_{od}(S^1 \times S^1)$ and $L^{\infty}_{od}(\R \times \R)$ are then to be introduced in the usual manner. These spaces are, in some sense, related to the spaces $W^{s,(p,q)}$.\\

	\section{The Fractional Harmonic Gradient Flow with Values in an orientable Hypersurface}
	
	Before we turn our attention to the case of a general target manifold, we dedicate some time to the uniqueness under improved regularity for the special case of an embedded hypersurface which is orientable and closed. This case exhibits similar properties as in the case of the $n-1$-sphere while essentially containing all features encountered in the general case. Moreover, the harmonic map equation possesses a slightly simpler form than in the general case, rendering this special case more tractable. However, the main reason to consider this special case lies in the emergence of all phenomena which we shall encounter in the general case, in particular the inclusion of a fractional divergence term, and thus providing a toy example which will simplify our treatment of the case of a general target manifold.\\
	
	Indeed, one of the main differences between the sphere $S^{n-1}$ and $N$ a hypersurface will be that the latter is described by a non-local PDE for the fractional harmonic flow which involves a fractional divergence. The techniques used here can then be rather easily adapted to the more general framework, as all estimates used are in some sense independent of the restrictions on $N$. The remaining properties contained in Theorem \ref{mainresultsection1} shall be proven in the next section for all $N$ at the same time.
	
	\subsection{The Euler-Lagrange Equation of the Half-Harmonic Map}
	Let us consider $N \subset \mathbb{R}^{n}$ a closed hypersurface, i.e. an orientable, compact submanifold of dimension $n-1$ without boundary. An important example is of course $N = S^{n-1}$. Under these circumstances, there exists a smooth unit normal field $\nu$ over $N$ which, using the tubular neighbourhood theorem and some cut-off-function, can be extended to a smooth vector field $\tilde{\nu}$ on all of $\R^{n}$, such that $\nu = \tilde{\nu}$ on $N$ and that $\nu$ is a unit vectorfield in a neighbourhood of $N$.\\
	
	Our goal is now to rewrite the $1/2$-harmonic map equation for maps with values in $N$. Following the computations in \cite{wettstein}, one may find along the same lines a formulation for the $1/2$-harmonic gradient flow. First, we recall from the introduction that a map $u: S^1 \to N \subset \R^n$ is called $1/2$-harmonic, if it is a critical point of the fractional $1/2$-Dirichlet energy:
	$$E(u) := \frac{1}{2} \int_{S^1} |(-\Delta)^{1/4} u |^2 dx,$$
	with respect to variations in $H^{1/2}(S^1;N)$. By compactness of $N$, we know that any element in this function space is almost everywhere bounded:
	$$H^{1/2}(S^1;N) \subset L^{\infty}(S^1)$$
	Let now $u(t)$ be a variation in the set of functions introduced in the introduction, such that $u(0) = u$ is a critical point of the fractional energy $E$. We may use the tubular neighbourhood theorem to construct $u(t) = \pi(u + t \varphi)$ for some $\varphi \in C^{\infty}(S^1)$. Here, we used $\pi$ to denote the projection onto $N$ which is well-defined and smooth on a sufficently small neighbourhood and thus for $t$ small enough. This means:
	$$u'(0) := \frac{d}{dt} u(t) \big{|}_{t=0} = d\pi(u) \varphi$$
	Then, we have for a critical point $u$ of $E$:
	\begin{align}
		0	&= \frac{d}{dt} E(u(t)) \big{|}_{t=0} \notag \\
			&= \frac{d}{dt} \left( \frac{1}{2} \int_{S^1} |(-\Delta)^{1/4} u(t) |^2 dx \right) \Big{|}_{t=0} \notag \\
			&= \int_{S^1} (-\Delta)^{1/4} u \cdot (-\Delta)^{1/4} u'(0) dx \notag \\
			&= \int_{S^1} (-\Delta)^{1/2} u \cdot d\pi(u) \varphi dx, 
	\end{align}
	which, thanks to $d\pi$ being an orthogonal projection onto the tangent space $T_{u} N$, can be rephrased as:
	\begin{equation}
		(-\Delta)^{1/2} u \perp T_u N
	\end{equation}
	This computation holds even for $N$ which are merely closed, there is no need to assume for example codimension $1$. This condition becomes however useful, if we would like to find an explicit formula for the harmonic map equation like in the local case, see \cite{struwe1}, or as we have seen for $N = S^{n-1}$ in \cite{mazoschi} or \cite{wettstein}. Namely, we know that:
	$$(-\Delta)^{1/2} u = \lambda \cdot \nu(u),$$
	where $\lambda$ is a scalar and depends on the point on $S^1$ inserted into $u$. Equivalently:
	$$\lambda = (-\Delta)^{1/2} u \cdot \nu(u)$$
	Take $\psi$ to be any smooth, scalar-valued function on $S^1$. We may then compute, by using the fractional Leibniz rule \eqref{fractionalleibnizruleintrod}:
	\begin{align}
		\int_{S^1} \lambda \cdot \psi dx	&= \int_{S^1} (-\Delta)^{1/2} u \cdot \psi \nu(u) dx \notag \\
								&= \int_{S^1} d_{1/2} u(x,y) \cdot d_{1/2} \left( \psi \nu(u) \right)(x,y) \frac{dx dy}{| x-y |} \notag \\
								&= \int_{S^1} d_{1/2} u(x,y) \cdot \left( d_{1/2} \left( \nu \circ u \right)(x,y) \psi(x) + d_{1/2} \psi(x,y) \nu(u(y)) \right) \frac{dx dy}{| x-y |} \notag \\
								&= \int_{S^1} d_{1/2} u(x,y) \cdot \left( d_{1/2} \left( \nu \circ u \right)(x,y) \psi(x) + d_{1/2} \psi(x,y) \frac{ \nu(u(x)) + \nu(u(y))}{2} \right) \frac{dx dy}{| x-y |},
	\end{align}
	where we used a change of variables (i.e. exchanging $x$ with $y$ and vice versa) to justify the last equation. Let us observe that we therefore have:
	\begin{equation}
	\label{factorlambda}
		\lambda = d_{1/2} u \cdot d_{1/2} \left( \nu \circ u \right) + \div_{1/2} \left( d_{1/2} u(x,y) \frac{\nu(u(x)) + \nu(u(y))}{2} \right)
	\end{equation}
	We emphasise that the operator $\div_{1/2}$ is defined precisely as the dual of the fractional gradient $d_{1/2}$, so that the identity in fact holds true. Let us observe that the first summand is actually hiding a quadratic structure similar to the one in the case $N = S^{n-1}$ (see \cite{wettstein}) or the local case. Namely, we observe that by the fundamental theorem of calculus:
	\begin{align}
		d_{1/2} \left( \nu \circ u \right)(x,y)	&= \frac{\nu(u(x)) - \nu(u(y))}{| x-y |^{1/2}} \notag \\
									&= \frac{1}{| x-y |^{1/2}} \int_{0}^{1} d\tilde{\nu} \left( u(y) + s \left( u(x) - u(y) \right) \right) \left( u(x) - u(y) \right) ds \notag \\
									&= \int_{0}^{1} d\tilde{\nu} \left( u(y) + s \left( u(x) - u(y) \right) \right) ds \cdot d_{1/2} u(x,y) \notag \\
									&=: \tilde{A_{u}}(x,y) d_{1/2} u(x,y),
	\end{align}
	where we notice the similarity of $\tilde{A_{u}}$ in a certain sense with the term appearing in the local case. We notice that $\tilde{A_{u}}$ is bounded, therefore giving the estimate:
	$$\big{|} d_{1/2} u \cdot \tilde{A_{u}} d_{1/2} u(x,y) \big{|} \leq \| \tilde{A_{u}} \|_{L^{\infty}} | d_{1/2} u |^{2}$$
	
	\subsection{Toy Example: Uniqueness under Improved Regularity}
	
	We now turn to the gradient flow associated with the fractional harmonic map with values in $N \subset \R^{n}$. Therefore, as in the case $N = S^{n-1}$ treated in \cite{wettstein}, let us assume that $u, v$ are two solutions to the fractional gradient flow taking a.e. values in the closed, orientable hypersurface $N \subset \R^n$ and we suppose the following regularity conditions hold:
	\begin{equation}
	\label{regasshyper}
		u,v \in L^{\infty}(\R_{+}; H^{1/2}(S^1)) \cap L^{2}_{loc}(\R_{+}; H^{1}(S^{1})); \quad u_t, v_t \in L^{2}(\R_{+}; L^{2}(S^{1}))),
	\end{equation}
	In addition, they satisfy the gradient flow associated with the $1/2$-harmonic map as described below (see the discussion in \cite{wettstein} and the previous subsection for a justification of this equation):
	\begin{equation}
	\label{gradflowuvhyper}
		w_t + (-\Delta)^{1/2} w = d_{1/2} w \cdot \tilde{A_{w}} d_{1/2} w \cdot \nu(w) + \div_{1/2} \left( d_{1/2} w(x,y) \frac{\nu(w(x)) + \nu(w(y))}{2} \right) \cdot \nu(w),
	\end{equation}
	for both $w = u$ and $w = v$, together with the boundary condition $u(0, \cdot) = v(0, \cdot) = u_{0} \in H^{1/2}(S^1;N)$. It is intuitively clear that the same arguments as in the proof of Theorem 3.2 in \cite{wettstein} should be applicable to the current situation to deduce an analogous uniqueness result, as long as we assume the same kind of regularity for the solution as there. Indeed, we shall prove:
	
	\begin{thm}
	\label{uniquehyper}
		If $u,v$ both solve \eqref{gradflowuvhyper} with the same initial datum $u_0 \in H^{1/2}(S^1;N)$ and we assume that:
		$$\| (-\Delta)^{1/4} u(t) \|_{L^{2}(S^1)}, \| (-\Delta)^{1/4} v(t) \|_{L^{2}(S^1)} \leq \| (-\Delta)^{1/4} u_{0} \|_{L^{2}(S^1)}, \quad \forall t \in \R_{+},$$
		then we have:
		$$u = v$$
	\end{thm}
	
	The proof is largely the same as the one for $N = S^{n-1}$, cf. \cite{wettstein}. The changes mostly consist of finding suitable decompositions of the different contributions for the situation at hand, in particular the divergence term. We therefore focus on providing the key estimates needed for the proof and refer to our previous work for the remaining details:
	
	\begin{proof}
		The main idea is to study the non-local PDE solved by the difference between $u$ and $v$. Therefore, we are led to define:
		$$w := u - v,$$
		and observe that:
		$$w(0, \cdot) = u(0, \cdot) - v(0, \cdot) = 0.$$
		We find that $w$ solves the following PDE by linearity of derivatives and the fractional Laplacians:
		\begin{align}
		\label{decompr1r2forhyper}
			w_{t} + (-\Delta)^{1/2} w	&= u_{t} + (-\Delta)^{1/2} u - v_{t} - (-\Delta)^{1/2} v \notag \\
								&= R_1 + R_2,
		\end{align}
		where:
		\begin{align}
		\label{remainder1}
			R_{1}	&:= d_{1/2} u \cdot \tilde{A_{u}} d_{1/2} u \cdot \nu(u) - d_{1/2} v \cdot \tilde{A_{v}} d_{1/2} v \cdot \nu(v) \\
		\label{remainder2}
			R_{2}	&:= \div_{1/2} \left( d_{1/2} u \frac{\nu(u(x)) + \nu(u(y))}{2} \right) \nu(u) - \div_{1/2} \left( d_{1/2} v \frac{\nu(v(x)) + \nu(v(y))}{2} \right) \nu(v)
		\end{align}
		Naturally, we would like to estimate $R_1$ and $R_2$ in a similar way as we have done for the $n-1$-sphere. We treat both contributions individually:\\
		
		Let us begin with $R_1$. We observe that by using the fundamental theorem of calculus, we can deal with the fractional gradients of $\nu(u), \nu(v)$, i.e. the additional term $\tilde{A_{u}}, \tilde{A_{v}}$. Namely, we have:
		\begin{align}
		\label{decompr1hyper}
			R_{1}	&= d_{1/2} u \cdot \tilde{A_{u}} d_{1/2} u - d_{1/2} v \cdot \tilde{A_{v}} d_{1/2} v \notag \\
					&= \left( d_{1/2} u \cdot \tilde{A_{u}} d_{1/2} u - d_{1/2} v \cdot \tilde{A_{u}} d_{1/2} v \right) + d_{1/2} v \cdot \left( \tilde{A_{u}} - \tilde{A_{v}} \right) d_{1/2} v \notag \\
					&=: R_{1,1} + R_{1,2}
		\end{align}
		For $R_{1,1}$, we proceed by using the fundamental theorem of calculus:
		\begin{align}
			R_{1,1}	&= \int_{0}^{1} \frac{d}{ds} \left( d_{1/2} \left( (1-s)v + s u \right) \cdot \tilde{A_{u}} d_{1/2} \left( (1-s)v + s u \right) \right) ds \notag \\
					&= \int_{0}^{1} \frac{d}{ds} \left( \int_{S^1} d_{1/2} \left( (1-s)v + s u \right)(x,y) \cdot \tilde{A_{u}}(x,y) d_{1/2} \left( (1-s)v + s u \right)(x,y) \frac{dy}{| x-y |} \right) ds \notag \\
					&\lesssim \int_{0}^{1} \int_{S^1} | d_{1/2} \left( (1-s)v + s u \right)(x,y) | \cdot | \tilde{A_{u}}(x,y) | \cdot | d_{1/2} \left( u - v \right)(x,y) | \frac{dy}{| x-y |} ds \notag \\
					&\lesssim \int_{0}^{1} \left| d_{1/2} \left( v + s(u-v) \right) \right|(x) \left| d_{1/2} w \right| (x) ds \notag \\
					&\lesssim \left( | d_{1/2} u |(x) + | d_{1/2} v |(x) \right) \cdot | d_{1/2} w |(x)
		\end{align}
		This contribution can be dealt with just as in the case $N = S^{n-1}$ after we test against $w$ itself, see \cite{wettstein}. Concerning $R_{1,2}$, we may use:
		\begin{align}
			&\tilde{A_{u}}(x,y) - \tilde{A_{v}}(x,y) \notag \\	
								&= \int_{0}^{1} d\tilde{\nu} \left( u(y) + s \left( u(x) - u(y) \right) \right) ds - \int_{0}^{1} d\tilde{\nu} \left( v(y) + s \left( v(x) - v(y) \right) \right) ds \notag \\
								&= \int_{0}^{1} \frac{d}{dt} \left( \int_{0}^{1} d\tilde{\nu} \left( v(y) + t w(y) + s \left( (v(x) + t w(x) - v(y) - t w(y) \right) \right) ds \right) dt \notag \\
								&= \int_{0}^{1} \int_{0}^{1} d(d\tilde{\nu}) \left( (1-s)(v(y) + t w(y)) + s (v(x) + t w(x)) \right) \cdot \left( (1-s) w(y) + s w(x) \right) ds dt
		\end{align}
		Now we may obtain an estimate as in the case $N = S^{n-1}$ by testing against $w$ and:
		$$w_{i}(y) w_{j}(x) \leq \frac{w_{i}(y)^2 + w_{j}(x)^2}{2} \leq \frac{| w |^2(y) + | w |^2(x)}{2},$$
		which enables us, together with the symmetry of the emerging integrals in $x,y$, to arrive at an estimate reminiscent of the first term on the right hand side of equation (20) in \cite{wettstein}.\\
		
		It remains to consider $R_2$. This term does not possess an immediate analogue in the case $N = S^{n-1}$, thus it seems to require some additional work. However, similar ideas (together with using the duality definition of $\div_{1/2}$ and Leibniz' rule for fractional gradients as in the previous section) may be employed to reach a suitable estimates. Let us first notice that testing against $w$ yields for $\nu_{u}(x,y) := (\nu(u(x)) + \nu(u(y)))/2$:
		\begin{align}
		\label{decompr2equation}
			\int_{S^1} R_2 w dx	&= \int_{S^1} \int_{S^1} d_{1/2} u d_{1/2} \left( \nu(u) w \right) \nu_{u}(x,y) - d_{1/2} v d_{1/2} \left( \nu(v) w \right) \nu_{v}(x,y) \frac{dx dy}{| x-y |}
		\end{align}
		To conclude, we have to estimate this expression appropriately. We see:
		\begin{align}
		\label{decompr2hyper}
			d_{1/2} u 	&d_{1/2} \left( \nu(u) w \right) \nu_{u}(x,y) - d_{1/2} v d_{1/2} \left( \nu(v) w \right) \nu_{v}(x,y) \notag \\
					&= d_{1/2} u d_{1/2} \left( \nu(u) w \right) \nu_{u}(x,y) - d_{1/2} u d_{1/2} \left( \nu(v) w \right) \nu_{u}(x,y) \notag \\
					&+ d_{1/2} u d_{1/2} \left( \nu(v) w \right) \nu_{u}(x,y) - d_{1/2} v d_{1/2} \left( \nu(v) w \right) \nu_{u}(x,y) \notag \\
					&+ d_{1/2} v d_{1/2} \left( \nu(v) w \right) \nu_{u}(x,y) - d_{1/2} v d_{1/2} \left( \nu(v) w \right) \nu_{v}(x,y) \notag \\
					&=: R_{2,1} + R_{2,2} + R_{2,3}
		\end{align}
		More precisely, we have that:
		$$R_{2,1} = d_{1/2} u d_{1/2} \left( (\nu(u) - \nu(v)) w \right) \nu_{u}(x,y),$$
		and:
		$$R_{2,2} = d_{1/2} w d_{1/2} \left( \nu(v) w \right) \nu_{u}(x,y),$$
		and finally:
		$$R_{2,3} = d_{1/2} v d_{1/2} \left( \nu(v) w \right) \left( \nu_{u}(x,y) - \nu_{v}(x,y) \right)$$
		We shall use the fractional Leibniz' rule as already seen in Section 2, see \eqref{fractionalleibnizruleintrod}:
		$$d_{1/2} \left( f w \right)(x,y) = f(x) \frac{w(x) - w(y)}{| x-y |^{1/2}} + w(y) \frac{f(x) - f(y)}{| x-y |^{1/2}},$$
		which leads us to:
		$$d_{1/2} \left( ( \nu(u) - \nu(v) ) w \right)(x,y) = \left( \nu(u(x)) - \nu(v(x)) \right) d_{1/2}w(x,y) + d_{1/2} \left( \nu(u) - \nu(v) \right) w(y)$$
		So we may estimate $R_{2,1}$ for example as follows:
		\begin{align}
			\left| \int_{S^{1}} R_{2,1} dx \right|	&\leq \int_{S^1} | d_{1/2} u |(x) | d_{1/2} w |(x) | \nu(u(x)) - \nu(v(x)) | dx \notag \\
										&+ \int_{S^1} | d_{1/2} u |(y) \left| d_{1/2} \left( \nu(u) - \nu(v) \right) \right|(y) | w | dy \notag \\
										&\lesssim \int_{S^1} | d_{1/2} u |(x) | d_{1/2} w |(x) | w | dx,
		\end{align}
		by using again the smoothness of $\tilde{\nu}$ and renaming the variable of integration. This is a term that can be treated as before.\\
		
		The remaining summands $R_{2,2}$ and $R_{2,3}$ are a bit more delicate to deal with, mainly because we have to use another kind of estimate in our proceedings. Let us start with $R_{2,2}$ and observe the following: We shall denote by $\pi$ the smooth closest point projection defined in a neighbourhood of $N$ and extended to all of $\R^n$ by using a cut-off function (compare with the construction in the next section for some more details). We have for all $x \in S^1$, using Einstein's summation convention and $u = (u_1, \ldots, u_{n})$ and similarily for $v$ and $w$:
		\begin{align}
			\nu(v(x))_{i} w_{i}(x)			&= \nu(v(x))_{i}(u_{i}(x) - v_{i}(x)) \notag \\
									&= \nu(v(x))_{i} \left( \partial_{j} \pi(v(x))_{i} (u_{j}(x) - v_{j}(x)) \right) \notag \\
									&+ \nu(v(x))_{i} \left( \int_{0}^{1} \int_{0}^{1} t \partial_{kj} \pi(v(x) + ts w(x))_{i} (u_{j}(x) - v_{j}(x)) (u_{k}(x) - v_{k}(x)) ds dt \right) \notag \\
									&= \nu(v(x))_{i} \left( \int_{0}^{1} \int_{0}^{1} t \partial_{kj} \pi(v(x) + ts w(x))_{i} (u_{j}(x) - v_{j}(x)) (u_{k}(x) - v_{k}(x)) ds dt \right) \notag \\
									&=: D_{jk}^{u,v}(x) (u_{j}(x) - v_{j}(x)) (u_{k}(x) - v_{k}(x)) = D_{j,k}^{u,v}(x) w_{j}(x) w_{k}(x)
		\end{align}
		by using Taylor expansion around the point $v(x)$. Observe that we exploited the fact that $d\pi(v(x))$ maps to the tangent space $T_{v(x)} N$ of $N$ at $v(x)$, which immediately shows:
		$$\nu(v(x)) d\pi(v(x)) w(x) = 0,$$
		as $\nu(v(x))$ is orthogonal to the projected vector $d\pi(v(x)) w(x)$. If we insert this into $R_{2,2}$ instead of $\nu(v) w$, we see by using $d_0 w = d_0 u - d_0 v$, boundedness of $u,v$ and therefore also of $w$, and estimating $D_{jk}^{uv}$ and its fractional gradient using the smoothness of $\pi$:
		\begin{equation}
			\left| \int_{S^1} R_{2,2} dx \right| \lesssim \int_{S^1} | d_{1/2} w |(x) | w | \left( | d_{1/2} u |(x) + | d_{1/2} v |(x) \right) dx
		\end{equation}
		In fact, we used implicitly a decomposition of the following form:
		\begin{align}
			d_{1/2} \left( \nu(v) w \right)(x,y)	&= d_{1/2} \left( D_{j,k}^{u,v} w^{j} w^{k} \right)(x,y) \notag \\
									&= d_{1/2} D_{j,k}^{u,v}(x,y) \cdot w^{j}(x) w^{k}(x) \notag \\
									&+ D_{j,k}^{u,v}(y) \cdot d_{1/2} w^{j}(x,y) \cdot w^{k}(x) \notag \\
									&+ D_{j,k}^{u,v}(y) w^{j}(y) \cdot d_{1/2}w^{k}(x,y),
		\end{align}
		which can then be dealt with similar to the term $R_1$ and $R_{1,2}$. Note that $| d_{1/2} w |(x)$ and $| d_{1/2} D_{j,k}^{u,v} |(x)$ can both be bounded by $| d_{1/2} u |(x) + | d_{1/2} v |(x)$ and that $\| w \|_{L^{\infty}} < +\infty$, as $N$ is compact and $u,v$ both assume values a.e. in $N$. Similarily, it is clear that $D_{j,k}^{u,v}$ is bounded, due to the regularity of the extended closest-point-projection $\pi$.\\
		
		To arrive at a similar estimate for $R_{2,3}$, we notice the following by using the formula above for $\nu(v) w$ and exchanging the labels $x,y$ at some point:
		\begin{equation}
			\left| \int_{S^1} R_{2,3} dx \right| \lesssim \int_{S^1} (| d_{1/2} u |(x) + | d_{1/2} v |(x)) | d_{1/2} w |(x) | w | + (| d_{1/2} u |(x) + | d_{1/2} v |(x)) ^2(x) | w |^2 dx
		\end{equation}
		Comparing this with the estimates in the case $N = S^{n-1}$, we see that we may now proceed as in the proof there, since the main estimate in equation (23) of \cite{wettstein} can now be generalized and the remainder of the proof is of general nature and does not rely on any particular structure of $S^{n-1}$. As a result, this concludes our proof.
	\end{proof}
	
	Naturally, the proof of uniqueness also continues to work for a variety of non-linearities which are in some sense quadratic in the fractional gradient and sufficiently smooth by precisely the same arguments. Moreover, as we have seen in the case of general hypersurfaces, certain perturbation terms are allowed to appear without obstructing the argument, namely some kinds of divergence terms. Therefore, it is expected that analogous results hold for fractional harmonic flows with values in arbitrary closed smooth manifolds by means virtue of a suitable quadratic structure similar to the local case, where it is intimately connected to the curvature of the manifold. In fact, there is an explicit formula for the half-harmonic map given in \cite{mazoschi} which we shall exploit in the next section. There, we shall also present the missing argument for the improvement in regularity needed to conclude the proof of uniqueness in the energy class for solutions with small $1/2$-energy.\\
	
	To conclude this section, let us mention the following: The choice of equation \eqref{gradflowuvhyper} is natural for orientable hypersurfaces, due to the existence of a unit normal vector field $\nu$. Nevertheless, one could omit the use of such a vector field by using that the non-linearity in the $1/2$-harmonic map equation could be phrased as:
	\begin{align}
	\label{alternativeversionofequationhyper}
		&(-\Delta)^{1/2} u(x)	\notag \\
		&=\left( Id - d\pi(u)(x) \right) (-\Delta)^{1/2} u(x) \notag \\
		&= P.V. \int_{S^1} \frac{u(x) - u(y)}{| x-y |^2} dy - d\pi(u)(x) P.V. \int_{S^1} \frac{u(x) - u(y)}{| x-y |^2} dy \notag \\
		&= P.V. \int_{S^1} \frac{u(x) - u(y) - d\pi(u)(x) \left( u(x) - u(y) \right)}{| x-y |^2} dy \notag \\
		&= P.V. \int_{S^1} \int_{0}^{1} \int_{0}^{1} (s-1) d(d\pi)( u(x) + (st-t) (u(x) - u(y))) ds dt d_{1/2} u(x,y) d_{1/2} u(x,y) \frac{dy}{| x-y |} \notag \\
		&=: \int_{S^1} P(x,y) d_{1/2} u(x,y) d_{1/2} u(x,y) \frac{dy}{| x-y |},
	\end{align}
	where we denote again by $\pi$ also an extension by cut-off of the closest point projection and use $\pi(u(x)) = u(x)$ for $u$ being a solution to the $1/2$-harmonic map equation in $N$. The map $P$ is defined qppropriately and one sees that it is clearly bounded. Observe that \eqref{alternativeversionofequationhyper} includes an implicit quadratic form summing over the components of $d_{1/2} u$. The equation follows by using Taylor-type expansions. It is clear that we could immeidately estimate \eqref{alternativeversionofequationhyper} directly in the same way as $R_{1}$ in the proof above and obtain uniqueness this way. Moreover, it is easily seen that the gradient flow equation with the non-linearity in \eqref{alternativeversionofequationhyper} and \eqref{gradflowuvhyper} actually are equivalent for sufficiently regular solutions.
	
	\section{Fractional Harmonic Gradient Flow with Values in a General Manifold}
	
	After having treated the case of orientable, closed hypersurfaces in some detail, we perform analogous investigations to resolve the general case and finally also discuss existence and convergence of solutions to the fractional harmonic gradient flow. In our computations and simplifications of the half-harmonic map equation, we follow \cite{mazoschi}, where the half-harmonic map equation was already stated as well as treated and some of its features were already highlighted. We enhance the exposition given there by analysing certain estimates in more detail, leading to the uniqueness result under improved regularity. To prove that energy class solutions actually possess slightly better regularity properties than assumed under some smallness condition on the energy, one proceeds similar to the one for $S^{n-1}$. The major difference lies in the use of Morrey estimates in the case $p = 2$ following \cite{daliopigati}, as we would like to deduce slightly better integrability and then apply the techniques in \cite{riv} for arbitrary integrability of the fractional gradient. This change of method should however not obscure the fact that the regularity gain once more stems from the emergence of an anti-symmetric potential. The discussion of local existence and convergence on the other hand is very similar to \cite{wettstein}, once we fix the right formulation for the $1/2$-harmonic gradient flow.
	
	\subsection{Half-Harmonic Map Equation for general Target Manifolds}
	
	\subsubsection{The Main Equation}
	
	Before we begin our analysis of the fractional harmonic gradient flow, we want to study the $1/2$-harmonic map equation and its features. Throughout this chapter, we assume that $N \subset \R^n$ is a closed and smooth manifold, in particular it is compact and without a boundary. Let us denote by $B_{\delta}(N)$ the $\delta$-neighbourhood of $N$, i.e. the collection of all points in $\R^n$ at a distance $< \delta$ from $N$. This definition obviously makes sense for any $\delta > 0$. By standard theory of smooth manifolds, i.e. the tubular neighbourhood theorem, we know that the closest point projection is a well-defined and smooth map:
	$$\pi: B_{\delta}(N) \to N,$$
	for sufficiently small $\delta > 0$, such that:
	$$\| \pi(x) - x \| = \inf_{y \in N} \| x-y \|, \quad \forall x \in B_{\delta}(N)$$
	One can show that the differential of $\pi$ at any point $x \in N$ is actually the orthogonal projection onto the tangent space of $N$ at $x$. The orthogonality of the projection means that the following holds:
	$$\forall x \in N: \quad d\pi(x)^2 = d\pi(x) = d\pi(x)^{T}$$
	Let $\varphi \in C^{\infty}_{c}(\R^n)$ be a smooth, compactly supported function, such that $\operatorname{supp} \varphi \subset B_{\delta}(N)$ and assume that:
	$$\varphi(x) = 1, \quad \forall x \in B_{\delta/2}(N)$$
	Then we may extend $\pi$ to a map $\tilde{\pi}: \R^n \to \R^n$ by the following formula:
	$$\tilde{\pi}(x) = \varphi(x) \cdot \pi(x), \quad \forall x \in B_{\delta}(N),$$
	as well as $\tilde{\pi}(x) = 0$ for every $x \notin B_{\delta}(N)$. This map is clearly well-defined and smooth. It agrees with $\pi$ on the set $B_{\delta/2}(N)$, but has the advantage of being defined globally, rendering certain definitions and computations easier later on. By abuse of notation, we shall from now on refer to $\tilde{\pi}$ as $\pi$ to keep the notation as simple as possible. It should be noticed at this point that the set of fixed points of $\tilde{\pi}$ consists of $N$ and a set $\tilde{N}$ with positive distance from $N$ due to the definition of the extension of the closest point projection.\\
	
	We recall that a function $u \in {H}^{1/2}(S^1; N)$ is called $1/2$-harmonic, if and only if:
	\begin{equation}
	\label{generalharmmap}
		(-\Delta)^{1/2}u \perp T_{u} N \Leftrightarrow d\pi(u) (-\Delta)^{1/2} u = 0,
	\end{equation}
	see \cite{dalioriv}, \cite{mazoschi} and the references therein. This can be easily verified, as $1/2$-harmonic maps are precisely the critical points in ${H}^{1/2}(S^1;N)$ of the energy:
	$$E(u) := \frac{1}{2} \int_{S^1} | (-\Delta)^{1/4} u |^2 dx$$
	Arguing as for hypersurfaces shows that the orthogonality relation above is another way to phrase this relation. Computing the Euler-Lagrange equation using a variation of the form:
	$$u(t) := \pi(u + t \varphi),$$
	for $\varphi \in C^{\infty}(S^1)$ and $t$ small enough, such that $u + t \varphi \in B_{\delta/2}(N)$, i.e. such that $\pi$ maps $u + t \varphi$ to a point on $N$ a.e. and thus to conclude $u(t) \in H^{1/2}(S^1;N)$. By criticality, the Euler-Langrange equation can be computed using differentiation of $E(u(t))$ with respect to $t$ at $t=0$:
	\begin{align}
		\frac{d}{dt} \left( \frac{1}{2} \int_{S^1} | (-\Delta)^{1/4} u(t) |^2 \right) \Big{|}_{t=0}	&= \int_{S^1} (-\Delta)^{1/4} u \cdot (-\Delta)^{1/4} \left( \frac{d}{dt} u(t) \Big{|}_{t=0} \right) dx \notag \\
																	&= \int_{S^1} (-\Delta)^{1/2} u \cdot d\pi(u) \varphi dx \notag \\
																	&= \int_{S^1} \int_{S^1} d_{1/2} u(x,y) d_{1/2} \left( d\pi(u) \varphi \right)(x,y) \frac{dy dx}{| x-y |},
	\end{align}
	which is precisely how we understand \eqref{generalharmmap}. The product of the vectors is naturally understood as a scalar product in the usual sense.\\
	
	For later use, we shall sometimes write $d\pi^{\perp}(u)$ for the following:
	\begin{equation}
	\label{complproj}
		d\pi^{\perp}(u) := Id - d\pi(u)
	\end{equation}
	One should observe that for every $x \in N$, the differential $d\pi^{\perp}(p)$ is actually another orthogonal projection, meaning:
	$$d\pi^{\perp}(x)^{T} = d\pi^{\perp}(x) = d\pi^{\perp}(x)^{2}$$
	This can be easily deduced from the corresponding identities for $d\pi(x)$.\\
	
	\subsubsection{Rewriting the Half-Harmonic Map Equation}
	
	Our first goal lies in the simplification of the half-harmonic map equation, similar considerations then apply to the $1/2$-harmonic gradient flow which is defined by the equation:
	$$u_{t} + (-\Delta)^{1/2} u \perp T_{u} N, \quad \forall (t,x) \in \R_{+} \times S^1,$$
	see also \cite{wettstein} for the case $N = S^{n-1}$. Therefore, working merely with the $1/2$-harmonic map equation instead of the associated flow is for brevity's sake.\\
	
	In fact, we would like to write the harmonic map equation in a similar form as in the case of $N = S^{n-1}$ or even the case of closed, orientable hypersurfaces, since this form seems appropriate for the proof of a Regularity Lemma similar to Lemma 3.4 in \cite{wettstein} and \cite{riv}. In addition, such considerations are useful when studying uniqueness under improved regularity assumptions, as we have seen both for $N = S^{n-1}$ and $N$ being a closed, orientable hypersurface. Indeed, we rewrote the half-harmonic map equation in both cases in such a way that enabled us to estimate certain summands more easily and reveal a quadratic structure in the non-local PDE. The computations we make are precisely the ones found \cite{mazoschi}, we merely rewrite certain terms in a manner more appropriate for our purposes and provide more general estimates than the ones found in \cite{mazoschi}.\\
	
	First of, let $\varphi \in C^{\infty}(S^1)$. Then we know by using the fractional version of Leibniz' rule as seen for example in \eqref{fractionalleibnizruleintrod} with $s = 1/2$:
	\begin{align}
	\label{rewriting1}
		\int_{S^1} d_{1/2} u \cdot d_{1/2}\varphi dx	&= \int_{S^1} d_{1/2} u \cdot d_{1/2} \left( d\pi(u) \varphi \right) dx + \int_{S^1} d_{1/2} u \cdot d_{1/2} \left( d\pi^{\perp}(u) \varphi \right) dx \notag \\
											&= \int_{S^1} d_{1/2} u \cdot d_{1/2} \left( d\pi^{\perp}(u) \varphi \right) dx \notag \\
											&= \int_{S^1} \int_{S^1} d_{1/2} u(x,y) d_{1/2} ( d\pi^{\perp}(u) )(x,y) \frac{dy}{| x-y |} \varphi(x) dx \notag \\
											&+ \int_{S^1} \int_{S^1} d_{1/2} u(x,y) d\pi^{\perp}(u(y)) d_{1/2}\varphi(x,y) \frac{dy dx}{| x-y |} \notag \\
	\end{align}
	Observe that we used \eqref{generalharmmap} in the second step for $u$ half-harmonic. We shall refer to the second summand as $R_1(\varphi)$:
	$$R_1(\varphi) := \int_{S^1} \int_{S^1} d_{1/2} u(x,y) d\pi^{\perp}(u(y)) d_{1/2}\varphi(x,y) \frac{dy dx}{| x-y |}$$
	The goal is to treat $R_1$ as a perturbation of a "main order term", which we consider to be the remaining one in \eqref{rewriting1}. To achieve this, we need the following Lemma that was already stated in \cite{mazoschi}:
	
	\begin{lem}
	\label{taylorpi}
		Let $x, y \in S^1$ and $u \in H^{1/2}(S^1;N)$. Then the following holds for all $x,y \in S^1$ such that $u(x), u(y) \in N$:
		$$u(x) - u(y) = d\pi(u(y))\left(u(x) - u(y) \right) + \int_{0}^1 \int_{0}^1 t d \left( d\pi \right)\left( (1-ts) u(y) + ts u(x) \right) (u(x) - u(y))^2 ds dt$$
		More precisely, for $i = 1, \ldots, n$, we have for the $i$-th component of $u = (u_{1}, \ldots, u_{n})$:
		\begin{align*}
			u_{i}(x) - u_{i}(y) 	&= d\pi(u(y))_{ij} (u_{j}(x) - u_{j}(y)) \\
							&+ \int_{0}^1 \int_{0}^1 t \partial_{kj} \pi_{i} \left( (1-ts) u(y) + tsu(x) \right) (u_{j}(x) - u_{j}(y))(u_{k}(x) - u_{k}(y)) ds dt,
		\end{align*}
		adopting Einstein's summation convention.
	\end{lem}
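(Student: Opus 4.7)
The plan is to view the identity as a second-order Taylor expansion with integral remainder for the (extended) closest-point projection $\pi$ along the line segment joining $u(y)$ and $u(x)$. The key observation is that, since $u(x), u(y) \in N$ by assumption, one has $\pi(u(x)) = u(x)$ and $\pi(u(y)) = u(y)$, so that the left-hand side can be rewritten as
\begin{equation*}
u(x) - u(y) = \pi(u(x)) - \pi(u(y)).
\end{equation*}
This brings $\pi$ into play and opens the door to Taylor expansion; the right-hand side is then nothing but the usual integral-remainder form of that expansion.

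Concretely, I would introduce the auxiliary function $g:[0,1] \to \R^n$ defined by
\begin{equation*}
g(t) := \pi\bigl( (1-t) u(y) + t u(x) \bigr),
\end{equation*}
where by $\pi$ I mean the globally defined smooth extension $\tilde\pi$ constructed earlier, so that $g$ is well defined and smooth for every $t \in [0,1]$ regardless of whether the segment lies in $B_\delta(N)$. By the chain rule,
\begin{equation*}
g'(t) = d\pi\bigl( (1-t) u(y) + t u(x) \bigr)(u(x) - u(y)),
\end{equation*}
and $g''$ has an analogous expression involving $d(d\pi)$ evaluated on the pair $(u(x) - u(y), u(x) - u(y))$. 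Since $g(0) = u(y)$ and $g(1) = u(x)$, the fundamental theorem of calculus yields $u(x) - u(y) = \int_0^1 g'(t)\, dt$, and writing $g'(t) = g'(0) + \int_0^t g''(r)\, dr$ and integrating in $t$ gives
\begin{equation*}
u(x) - u(y) = d\pi(u(y))(u(x) - u(y)) + \int_0^1 \int_0^t g''(r)\, dr\, dt.
\end{equation*}

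The final step is a change of variables $r = ts$, $dr = t\, ds$ in the inner integral, which turns the triangular domain $\{0 \le r \le t \le 1\}$ into the square $[0,1]^2$ and introduces the Jacobian factor $t$, producing exactly
\begin{equation*}
\int_0^1 \int_0^1 t\, d(d\pi)\bigl( (1-ts) u(y) + ts u(x) \bigr)(u(x) - u(y))^2\, ds\, dt.
\end{equation*}
The componentwise version then follows by expanding $d(d\pi)$ in terms of the partial derivatives $\partial_{kj} \pi_i$ and applying Einstein's convention. The only mildly delicate point is ensuring that the calculus is valid along the segment between $u(x)$ and $u(y)$, which is where the smooth global extension of $\pi$ to $\R^n$ (introduced in the previous subsection) is crucial; without it one would need a smallness condition $|u(x) - u(y)|$ to keep the segment inside $B_\delta(N)$. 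I do not expect any genuine obstacle beyond this bookkeeping.
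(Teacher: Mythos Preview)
Your proposal is correct and follows essentially the same approach as the paper: the paper explicitly says the proof is ``an immediate and standard application of Taylor's formula and/or repeated applications of the fundamental theorem of calculus'' and, like you, stresses that the globally defined extension of $\pi$ is what justifies evaluating along the segment $(1-ts)u(y) + ts\,u(x)$. Your auxiliary function $g$, the two applications of the fundamental theorem of calculus, and the change of variables $r = ts$ reproduce exactly the intended argument.
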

	
	Similar identities and/or estimates already appeared in \cite{daliopigati}, \cite{mazoschi}. We shall define the following quantity based on Lemma \ref{taylorpi}:
	\begin{equation}
	\label{curvature}
		A_{u}(dv, dw)(x,y) :=  \int_{0}^1 \int_{0}^1 t d \left( d\pi \right)\left( (1-ts) u(y) + ts u(x) \right) (v(x) - v(y))(w(x) - w(y)) ds dt
	\end{equation}
	This simplifies the result above considerably. In fact, we could restate Lemma \ref{taylorpi} as:
	$$u_{i}(x) - u_{i}(y) = d\pi(u(y))_{ij} (u_{j}(x) - u_{j}(y)) + A^{i}_{u}(du, du)(x,y),$$
	We emphasise that this is one of the main points why we would like to consider $\pi$ as a map defined on all of $\R^n$. This way, we may insert any value into the function, meaning that even if $(1-ts)u(y) + ts u(x)$ is not necessarily in $N$ or even close enough for the closest point projection to be well-defined, the formula above still produces a reasonable result we may work with. The proof is an immediate and standard application of Taylor's formula and/or repeated applications of the fundamental theorem of calculus and therefore omitted. We refer to \cite{mazoschi} for details.\\
	
	Using the orthogonality of the projections $d\pi$ and $d\pi^{\perp}$, we may deduce (again using Einstein's summation convention):
	\begin{align}
	\label{rest01}
		R_{1}(\varphi)	&= \int_{S^1} \int_{S^1} \frac{u_{i}(x) - u_{i}(y)}{| x-y |^{1/2}} d\pi^{\perp}(u(y))_{ij} d_{1/2}\varphi_{j}(x,y) \frac{dy dx}{| x-y |} \notag \\
					&= \int_{S^1} \int_{S^1} \frac{d\pi(u(y))_{ik} (u_{k}(x) - u_{k}(y))}{| x-y |^{1/2}} d\pi^{\perp}(u(y))_{ij} d_{1/2}\varphi_{j}(x,y) \frac{dy dx}{| x-y |} \notag \\
					&+ \int_{S^1} \int_{S^1} \frac{A^{i}_{u}(du, du)(x,y)}{| x-y |^{1/2}} d\pi^{\perp}(u(y))_{ij} d_{1/2}\varphi_{j}(x,y) \frac{dy dx}{| x-y |} \notag \\
					&= \int_{S^1} \int_{S^1} \frac{A^{i}_{u}(du, du)(x,y)}{| x-y |^{1/2}} d\pi^{\perp}(u(y))_{ij} d_{1/2}\varphi_{j}(x,y) \frac{dy dx}{| x-y |},
	\end{align}
	since:
	$$d\pi(u(y))_{ik} d\pi^{\perp}(u(y))_{ij} = 0, \quad \forall k,j,$$
	due to orthogonality of the projection and $d\pi^{\perp} = Id - d\pi$. We observe that the formula above for $R_{1}$ could also be stated as:
	$$R_1 = \div_{1/2} \left( \frac{A^{i}_{u}(du, du)(x,y)}{| x-y |^{1/2}} d\pi^{\perp}(u(y))_{ij} \right)$$
	This combined with the computations in \eqref{rewriting1} show us that the fractional harmonic map equation can actually be rephrased as:
	\begin{equation}
		(-\Delta)^{1/2} u = d_{1/2} u \cdot d_{1/2} \left( d\pi^{\perp}(u) \right) + \div_{1/2} \left( \frac{A^{i}_{u}(du, du)(x,y)}{| x-y |^{1/2}} d\pi^{\perp}(u(y))_{ij} \right)
	\end{equation}
	Comparing this with the previously investigated case of orientable, closed hypersurfaces and keeping \eqref{factorlambda} in mind, we notice the immediate similarities between the two equations. Indeed, this will be a crucial point in reducing our computations to establish uniqueness under improved regularity and obtaining the result basically for free from what we have already done.\\
	
	We may prove the following estimates:
	
	\begin{lem}
	\label{refr1}
		Let $\varphi \in \dot{F}^{1/2}_{p',2}(S^1)$ for $1/p + 1/p' = 1$ and $p \geq 2$ finite. Then, for all $v \in \dot{F}^{1/2}_{2,2}(S^1), w \in \dot{F}^{1/2}_{p,2}(S^1)$, we have:
		\begin{equation}
		\label{estr1higher}
			| R_{1}^{v,w}(\varphi) | := \left| \int_{S^1} \int_{S^1} \frac{A^{i}_{u}(dv, dw)(x,y)}{| x-y |^{1/2}} d\pi^{\perp}(u(y))_{ij} d_{1/2} \varphi_{j}(x,y) \frac{dy dx}{| x-y |} \right| \lesssim \| \varphi \|_{\dot{F}^{1/2}_{p',2}} \| v \|_{\dot{F}^{1/2}_{2,2}} \| w \|_{\dot{F}^{1/2}_{p,2}}
		\end{equation}
		In addition, for $v,w \in \dot{F}^{1/2}_{4,2}(S^1)$, we have:
		\begin{align*}
			\int_{S^1} \int_{S^1}	&\frac{A^{i}_{u}(dv, dw)(x,y)}{| x-y |^{1/2}} d\pi^{\perp}(u(y))_{ij} d_{1/2} \varphi_{j}(x,y) \frac{dy dx}{| x-y |} \notag \\
							&= \int_{S^1} \int_{S^1} {A^{i}_{u}(dv, dw)(x,y)} d\pi^{\perp}(u(y))_{ij} - {A^{i}_{u}(dv, dw)(y,x)} d\pi^{\perp}(u(x))_{ij} \frac{dy}{| x-y |^2} \varphi_{j}(x) dx,
		\end{align*}
		and as a result $R_{1}^{v,w} \in L^{2}(S^1)$ with:
		\begin{equation}
		\label{estr1reg}
			\| R_1 \|_{L^2} \lesssim \| v \|_{\dot{F}^{1/2}_{4,2}} \| w \|_{\dot{F}^{1/2}_{4,2}}
		\end{equation}
	\end{lem}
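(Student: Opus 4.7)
My plan is to combine pointwise bounds on $A_u$ (obtained from the smoothness of the extended closest-point projection $\pi$) with H\"older's inequality in both the inner and outer variables, matching the resulting Gagliardo-type norms to the desired Triebel--Lizorkin norms via Theorem \ref{schiwangthm1.4} and the Sobolev embeddings on $S^1$. The key pointwise input is $|A^i_u(dv,dw)(x,y)|\lesssim|v(x)-v(y)|\,|w(x)-w(y)|$, immediate from \eqref{curvature} and the uniform bound on $d(d\pi)$, together with $|d\pi^\perp(u(y))|\lesssim 1$.

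For the first estimate \eqref{estr1higher}, this bound reduces the task to controlling
$$\iint\frac{|v(x)-v(y)|\,|w(x)-w(y)|}{|x-y|^{1/2}}\,|d_{1/2}\varphi(x,y)|\,\frac{dy\,dx}{|x-y|}.$$
I would split $|x-y|^{-1/2}=|x-y|^{-s}\cdot|x-y|^{-(1/2-s)}$ for a parameter $s\in(0,1/p)$ and reinterpret the first two factors as $d_{s}v\cdot d_{1/2-s}w$, then apply H\"older in $y$ (with exponents $(q_1,q_2,2)$ satisfying $\sum 1/q_i=1$), followed by H\"older in $x$ (with exponents $(1/s,\,1/(1/p-s),\,p')$ whose reciprocals also sum to $1$). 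The resulting product of Gagliardo-type norms $\|v\|_{\dot W^{s,(1/s,q_1)}}\,\|w\|_{\dot W^{1/2-s,(1/(1/p-s),q_2)}}\,\|\varphi\|_{\dot W^{1/2,(p',2)}}$ is upgraded to the desired Triebel--Lizorkin expression using Theorem \ref{schiwangthm1.4} together with the Sobolev embeddings $\dot F^{1/2}_{2,2}\hookrightarrow\dot F^{s}_{1/s,2}$, $\dot F^{1/2}_{p,2}\hookrightarrow\dot F^{1/2-s}_{1/(1/p-s),2}$, and the monotonicity of Triebel--Lizorkin spaces in the fine index.

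For the second part, the rewriting follows by expanding $d_{1/2}\varphi(x,y)=(\varphi(x)-\varphi(y))/|x-y|^{1/2}$ in the defining integral of $R_1^{v,w}(\varphi)$, splitting into two integrals containing $\varphi(x)$ and $\varphi(y)$ respectively, and performing the change of variables $x\leftrightarrow y$ in the $\varphi(y)$-piece; symmetry of the measure $dy\,dx/|x-y|$ under this swap yields precisely the claimed formula for $R_1^{v,w}(x)$. For the $L^2$ bound, I bound the bracketed numerator in absolute value by $2C\,|v(x)-v(y)|\,|w(x)-w(y)|$ and apply Cauchy--Schwarz in $y$ with weight $dy/|x-y|$ to obtain $|R_1^{v,w}(x)|\lesssim|d_{1/2}v|_{(2)}(x)\cdot|d_{1/2}w|_{(2)}(x)$ for a.e.\ $x$. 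A further Cauchy--Schwarz in $x$ produces
$$\|R_1^{v,w}\|_{L^2}\lesssim\|v\|_{\dot W^{1/2,(4,2)}}\,\|w\|_{\dot W^{1/2,(4,2)}}\simeq\|v\|_{\dot F^{1/2}_{4,2}}\,\|w\|_{\dot F^{1/2}_{4,2}},$$
the final equivalence being Theorem \ref{schiwangthm1.4} at $s=1/2$, $P=4$, $q=2$ (admissibility $4>2/(1+1)=1$ is trivial).

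The principal difficulty lies in the first part: one must consistently balance the H\"older exponents with the Sobolev scaling relations so that they match the prescribed $p$ at every step, while verifying the admissibility condition $P>q/(1+\sigma q)$ of Theorem \ref{schiwangthm1.4} throughout. Conceptually, however, the crucial gain is the antisymmetrization behind the second part: it converts a distributional object of order $-1/2$ (the best that part one yields) into an honest $L^2$-function, and it is precisely this $L^2$-regularity improvement that will feed into the subsequent bootstrap argument for the fractional harmonic gradient flow.
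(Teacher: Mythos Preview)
Your proposal is correct and follows essentially the same route as the paper's own proof: the splitting $|x-y|^{-1/2}=|x-y|^{-s}|x-y|^{-(1/2-s)}$, the double H\"older step, the identification $\dot W^{\sigma,(P,q)}\simeq\dot F^{\sigma}_{P,q}$ via Theorem~\ref{schiwangthm1.4}, and the Sobolev embeddings are exactly the paper's argument (the paper simply fixes your unspecified inner exponents as $q_1=q_2=4$ and records the admissibility checks $1/s>4/(1+4s)$ and $p/(1-sp)>4/(3-4s)$ explicitly). The second part is likewise identical: the $x\leftrightarrow y$ swap to extract the function $R_1^{v,w}(x)$, followed by Cauchy--Schwarz in $y$ and then in $x$ to land in $\dot W^{1/2,(4,2)}\simeq\dot F^{1/2}_{4,2}$.
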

	
	\begin{proof}
		The first estimate follows by letting $0 < s < 1$ and observing:
		\begin{align}
		\label{holderapplication}
			\Big{|} \int_{S^1} \int_{S^1} 	&\frac{A^{i}_{u}(dv, dw)(x,y)}{| x-y |^{1/2}} d\pi^{\perp}(u(y))_{ij} d_{1/2} \varphi(x,y) \frac{dy dx}{| x-y |} \Big{|}	\notag \\
									&\lesssim \left|  \int_{S^1} \int_{S^1} | d_{s} v(x,y) | | d_{1/2 - s} w(x,y) | | d_{1/2} \varphi(x,y) | \frac{dy dx}{| x-y |}\right| \notag \\
									&\lesssim \| v \|_{\dot{W}^{s, (1/s, 4)}} \| w \|_{\dot{W}^{1/2 - s, (p/(1-sp), 4)}} \| \varphi \|_{\dot{W}^{1/2, (p', 2)}},
		\end{align}
		where we used the triangle inequality and H\"older's inequality. If we can choose $s$ in a way, such that:
		$$\frac{p}{1-sp} > \frac{4}{3 - 4s}, \frac{1}{s} > \frac{4}{1 + 4s},$$
		we may use the identification mentioned in the preliminary section. One notices that the latter inequality is trivially true, while the first one reduces to:
		$$p \geq 2 > \frac{4}{3},$$
		which again holds trivially. So the choice of $s \in (0,1/2)$ is immaterial here (we just need to ensure that we do not divide by $0$ or have negative H\"older exponents). Indeed these conditions lead to $0 < s < 1/p$, which clearly allows for a choice of $s$. Consequently, we may identify:
		$$\dot{W}^{s, (1/s, 4)} = \dot{F}^{s}_{1/s, 4}, \quad \dot{W}^{1/2 - s, (p/(1-sp), 4)} = \dot{F}^{1/2 - s}_{p/(1-sp), 4}, \quad \dot{W}^{1/2, (p', 2)} = \dot{F}^{1/2}_{p', 2},$$
		and the norms are equivalent, as mentioned in the preliminary section, see \cite{schiwang}. By using Sobolev embeddings for Triebel-Lizorkin spaces, this shows:
		$$\| v \|_{\dot{F}^{s}_{1/s, 4}} \lesssim \| v \|_{\dot{F}^{s}_{1/s, 2}} \lesssim \| v \|_{\dot{F}^{1/2}_{2, 2}},$$
		as well as:
		$$\| w \|_{\dot{F}^{1/2 - s}_{p/(1-sp), 4}} \lesssim \| w \|_{\dot{F}^{1/2 - s}_{p/(1-sp), 2}} \lesssim \| w \|_{\dot{F}^{1/2}_{p, 2}}.$$
		Combining these inequalities with the estimate in \eqref{holderapplication}, we find the desired result:
		$$\Big{|} \int_{S^1} \int_{S^1} \frac{A^{i}_{u}(dv, dw)(x,y)}{| x-y |^{1/2}} d\pi^{\perp}(u(y))_{ij} d_{1/2} \varphi(x,y) \frac{dy dx}{| x-y |} \Big{|} \lesssim \| \varphi \|_{\dot{F}^{1/2}_{p',2}} \| v \|_{\dot{F}^{1/2}_{2,2}} \| w \|_{\dot{F}^{1/2}_{p,2}}$$
		For the second identity mentioned above, one observes that the equality mentioned holds by a simple change of role between $x$ and $y$. The estimate is then obtained by noticing:
		\begin{align}
			\Big{|} \int_{S^1} \int_{S^1} 	&{A^{i}_{u}(dv, dw)(x,y)} d\pi^{\perp}(u(y))_{ij} - {A^{i}_{u}(dv, dw)(y,x)} d\pi^{\perp}(u(x))_{ij} \frac{dy}{| x-y |^2} \varphi_{j}(x) dx \Big{|} \notag \\
									&\lesssim \int_{S^1} \int_{S^1} | d_{1/2} v(x,y) | | d_{1/2} w(x,y) | \frac{dy}{| x-y |} | \varphi_{j}(x) | dx \notag \\
									&\lesssim \int_{S^1} | d_{1/2} v |(x) | d_{1/2} w |(x) \frac{dy}{| x-y |} | \varphi_{j}(x) | dx \notag \\
									&\lesssim \| | d_{1/2} v |(x) \|_{L^{4}} \| | d_{1/2} w |(x) \|_{L^{4}} \| \varphi \|_{L^{2}} \notag \\
									&\lesssim \| v \|_{\dot{F}^{1/2}_{4,2}} \| w \|_{\dot{F}^{1/2}_{4,2}} \| \varphi \|_{L^2},
		\end{align}
		by applying H\"older's inequality twice as well as boundedness of $d\pi$ and consequently for $d \pi^{\perp}$. Observe that we used once more the equivalent characterisation of the Triebel-Lizorkin norm in the case $q = 2, s = 1/2$.
	\end{proof}
	
	This representation of the fractional harmonic map equation will already suffice for proving uniqueness under improved regularity assumptions.\\
	
	We shall continue as in \cite{mazoschi} and further explore simplifications of the leading term, i.e. the term:
	$$\int_{S^1} \int_{S^1} d_{1/2} u_{i}(x,y) d_{1/2} ( d\pi^{\perp}(u) )_{ij}(x,y) \frac{dy}{| x-y |} \varphi_{j}(x) dx = \int_{S^1} d_{1/2} u_{i} \cdot d_{1/2} ( d\pi^{\perp}(u) )_{ij}(x)  \varphi_{j}(x) dx$$
	Using Lemma \ref{taylorpi}, we may replace $d_{1/2} u(x,y)$ by the following expression:
	\begin{align*}
		\int_{S^1} &d_{1/2} u_{i}(x,y) d_{1/2} ( d\pi^{\perp}(u) )_{ij}(x,y) \frac{dy}{| x-y |}	\notag \\
																	&= \int_{S^1} d_{1/2} \left( d\pi^{\perp}(u) \right)_{ij} (x,y) d\pi_{ik}(u(y)) d_{1/2} u_{k}(x,y) \frac{dy}{| x-y |} \notag \\
																	&+ \int_{S^1} d_{1/2} \left( d\pi^{\perp}(u) \right)_{ij} (x,y) A^{i}_{u}(du, du)(x,y) \frac{dy}{| x-y |^{3/2}} 
	\end{align*}
	The second summand is defined to be:
	\begin{equation}
	\label{rest02}
		R_2(x) := \int_{S^1} d_{1/2} \left( \pi^{\perp}(u) \right)_{ij}(x,y) A^{i}_{u}(du, du)(x,y) \frac{dy}{| x-y |^{3/2}}
	\end{equation}
	The first summand may be rewritten one last time to obtain:
	\begin{equation}
		\int_{S^1} d_{1/2} \left( \pi^{\perp}(u) \right)_{ij}(x,y) d\pi_{ik}(u(y)) d_{1/2} u_{k}(x,y) \frac{dy}{| x-y |} = \Omega_{jk} \cdot d_{1/2}u_{k} + R_3,
	\end{equation}
	where:
	\begin{equation}
	\label{defomega}
		\Omega_{jk}(x,y) := {d\pi(u(y))_{ik} d_{1/2}\left( d\pi^{\perp}(u) \right)_{ij}(x,y) - d\pi(u(y))_{ij} d_{1/2}\left( d\pi^{\perp}(u) \right)_{ik}(x,y)}, \quad \forall j, k,
	\end{equation}
	as well as:
	\begin{align}
	\label{rest03}
		R_{3}(x) 	&:= \int_{S^1} d\pi(u(y))_{ij} d_{1/2}\left( d\pi^{\perp}(u) \right)_{ik}(x,y) d_{1/2}u_{k}(x,y) \frac{dy}{| x-y |} \notag \\
				&= \int_{S^1} d\pi(u(y))_{ij} d\pi^{\perp}(u(x))_{ik} d_{1/2}u_{k}(x,y) \frac{dy}{| x-y |^{3/2}} \notag \\
				&= -\int_{S^1} d_{1/2} \left( d\pi(u) \right)_{ij}(x,y) d\pi^{\perp}(u(x))_{ik} d_{1/2}u_{k}(x,y) \frac{dy}{| x-y |},
	\end{align}
	simply because of orthogonality of the projections. We highlight that $\Omega = -\Omega^{T}$, i.e. we have found an anti-symmetric potential similar to the case $N = S^{n-1}$. We will sometimes write $\Omega_u$ to emphasise the dependence on $u$. This also suggests that an increase in integrability should be obtainable in the critical case of the non-local PDE, i.e. for $u \in H^{1/2}(S^1)$.\\
	
	We close this preliminary examination of the half-harmonic map equation by summarising these computations in the following equivalent equation for fractional harmonic maps:
	\begin{equation}
	\label{generalharmmaprewritten}
		(-\Delta)^{1/2} u = \Omega \cdot d_{1/2} u + R_{1} + R_{2} + R_{3},
	\end{equation}
	where we use the definitions \eqref{rest01}, \eqref{rest02}, \eqref{rest03} as well as \eqref{defomega}. Let us remark that the following holds:
	
	\begin{lem}
	\label{refr2}
		Let $v \in \dot{F}^{1/2}_{p,2}(S^1), w \in \dot{F}^{1/2}_{2,2}(S^1)$ with $p > 2$. Then:
		$$\| R_{2}^{v,w} \|_{L^{\frac{2p}{2+p}}} \lesssim \| u \|_{\dot{F}^{1/2}_{2,2}} \| v \|_{\dot{F}^{1/2}_{p,2}} \| w \|_{\dot{F}^{1/2}_{2,2}},$$
		and if $v, w \in \dot{F}^{1/2}_{4,2}$
		$$\| R_{2}^{v,w} \|_{L^{2}} \lesssim \| u \|_{\dot{F}^{1/2}_{2,2}} \| v \|_{\dot{F}^{1/2}_{4,2}} \| w \|_{\dot{F}^{1/2}_{4,2}},$$
		where:
		$$R_{2}^{v,w} := \int_{S^1} d_{1/2} \left( \pi^{\perp}(u) \right)_{ij}(x,y) A^{i}_{u}(dv, dw)(x,y) \frac{dy}{| x-y |^{3/2}}$$
		Similar estimates can be obtained for $R_3$ by using a variant of Lemma \ref{taylorpi}.
	\end{lem}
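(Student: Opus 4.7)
The plan is to reduce the lemma to two successive applications of Hölder's inequality and then invoke Theorem \ref{schiwangthm1.4} together with Sobolev embeddings of Triebel-Lizorkin spaces. The first step is a pointwise bound on the integrand of $R_{2}^{v,w}$: smoothness of the extended projection $\pi$ yields, via the chain rule, $|d_{1/2}(d\pi^{\perp}(u))(x,y)| \lesssim |d_{1/2} u(x,y)|$, while the integral representation \eqref{curvature} together with boundedness of $d(d\pi)$ gives $|A_{u}(dv,dw)(x,y)| \lesssim |v(x)-v(y)||w(x)-w(y)|$. Writing $|f(x)-f(y)| = |x-y|^{s}|d_{s}f(x,y)|$ for each of the three functions and redistributing the surplus factor $|x-y|^{-3/2}$ against $dy = |x-y|\cdot dy/|x-y|$, I obtain for any nonnegative triple with $s_1+s_2+s_3=1$ the reduction
\begin{equation*}
|R_{2}^{v,w}(x)| \lesssim \int_{S^1} |d_{s_1}u(x,y)|\,|d_{s_2}v(x,y)|\,|d_{s_3}w(x,y)|\,\frac{dy}{|x-y|}.
\end{equation*}

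For the first estimate I would take $s_1=s_3=\tfrac14$ and $s_2=\tfrac12$. Hölder in $y$ (against the measure $dy/|x-y|$) with conjugate exponents $(4,2,4)$ gives the pointwise bound $|R_{2}^{v,w}(x)| \lesssim \mathcal{D}_{1/4,4}(u)(x)\,\mathcal{D}_{1/2,2}(v)(x)\,\mathcal{D}_{1/4,4}(w)(x)$, and a second Hölder in $x$ with exponents $(4,p,4)$ yields $\|u\|_{\dot W^{1/4,(4,4)}}\|v\|_{\dot W^{1/2,(p,2)}}\|w\|_{\dot W^{1/4,(4,4)}}$, since $\tfrac14 + \tfrac1p + \tfrac14 = \tfrac{p+2}{2p}$. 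Theorem \ref{schiwangthm1.4} is applicable to both triples $(s,p,q)=(1/4,4,4)$ and $(1/2,p,2)$ (the requirement $p > q/(1+sq)$ reduces to $4>2$ and $p>1$ respectively), so these norms are equivalent to the corresponding Triebel-Lizorkin ones; the Sobolev embedding $\dot F^{1/2}_{2,2} \hookrightarrow \dot F^{1/4}_{4,4}$ (scaling $\tfrac12-\tfrac12 = \tfrac14-\tfrac14 = 0$, with secondary index growing from $2$ to $4$) converts the $u$- and $w$-factors into the asserted form.

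For the $L^{2}$ estimate I would use the symmetric split $s_1=s_2=s_3=\tfrac13$ combined with Hölder in $y$ with exponents $(3,3,3)$ and then Hölder in $x$ with exponents $(3,12,12)$, which matches because $\tfrac13 + \tfrac1{12} + \tfrac1{12} = \tfrac12$. This gives $\|R_{2}^{v,w}\|_{L^2} \lesssim \|u\|_{\dot W^{1/3,(3,3)}}\|v\|_{\dot W^{1/3,(12,3)}}\|w\|_{\dot W^{1/3,(12,3)}}$, and the embeddings $\dot F^{1/2}_{2,2} \hookrightarrow \dot F^{1/3}_{3,3}$ (scaling $0$) and $\dot F^{1/2}_{4,2} \hookrightarrow \dot F^{1/3}_{12,3}$ (scaling $\tfrac14$)---both valid because the secondary index grows from $2$ to $3$---convert this into the asserted inequality. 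The parenthetical remark about $R_{3}$ follows by the identical scheme: the representation \eqref{rest03} displays $R_{3}$ as a three-factor fractional-gradient integrand with $d_{1/2}(d\pi(u))$ playing the role of $d_{1/2}(d\pi^{\perp}(u))$ and a $d_{1/2}u$ factor in place of $A_{u}(dv,dw)$, bounded in the same Lipschitz fashion.

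The only real obstacle is the bookkeeping of exponents: one must simultaneously arrange $s_1+s_2+s_3=1$ (so that surplus powers of $|x-y|$ cancel against the measure $dy/|x-y|$), $\sum 1/q_i = 1$ (Hölder in $y$), $\sum 1/r_i$ equal to the reciprocal of the target Lebesgue exponent (Hölder in $x$), a matching scaling condition $s-1/p=s'-1/p'$ with appropriate secondary-index growth for each Sobolev embedding, and the condition $p > q/(1+sq)$ for each use of Theorem \ref{schiwangthm1.4}. No cancellation or commutator is needed---the manifold structure intervenes only through the pointwise Lipschitz bounds on the extended projection $\pi$.
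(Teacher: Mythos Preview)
Your argument for the two $R_2^{v,w}$ estimates is correct and follows the same scheme as the paper: a pointwise bound on the integrand reducing to a three-factor product of fractional differences, two successive H\"older inequalities, identification of the resulting $\dot W^{s,(p,q)}$-norms with Triebel--Lizorkin norms via Theorem~\ref{schiwangthm1.4}, and finally Sobolev embeddings. The paper keeps a free parameter $s$ and uses the symmetric inner H\"older triple $(3,3,3)$ throughout, whereas you make specific numerical choices (inner exponents $(4,2,4)$ for the first estimate, $(3,3,3)$ for the second) with correspondingly different outer exponents; both sets of choices land in the admissible range for Theorem~\ref{schiwangthm1.4} and for the diagonal Triebel--Lizorkin embeddings, so the difference is purely cosmetic.

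Your remark on $R_3$ needs one correction: the representation \eqref{rest03} displays only \emph{two} fractional-gradient factors, namely $d_{1/2}(d\pi(u))$ and $d_{1/2}u$, not three; replacing $A_u(dv,dw)$ by a single $d_{1/2}u$ as you describe loses one order. To recover the trilinear form required for an estimate of the shape $\|u\|\,\|v\|\,\|w\|$ one must apply Lemma~\ref{taylorpi} once more---this time expanding $u(x)-u(y)$ around $u(x)$ rather than $u(y)$---so that the tangential part $d\pi(u(x))_{kl}(u_l(x)-u_l(y))$ is annihilated by the factor $d\pi^{\perp}(u(x))_{ik}$ already present in \eqref{rest03}, leaving only the quadratic remainder $A_u$. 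This is precisely what the lemma statement means by ``using a variant of Lemma~\ref{taylorpi}'', and after this step your H\"older/embedding machinery applies verbatim.
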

	
	The proof is similar to the one in Lemma \ref{refr1}. One notices that by Sobolev embeddings for Triebel-Lizorkin spaces $\dot{F}^{1/2}_{p',2}(S^1) \subset L^{\frac{2p}{p-2}}(S^1)$, we may deduce that:
	$$R_{2}^{v,w}, R_{3}^{v,w} \in \dot{F}^{-1/2}_{p,2}(S^1),$$
	with estimates analogous to the ones in Lemma \ref{refr2}.
	
	\begin{proof}
		We see:
		$$| R_{2}^{v,w}(x) | \lesssim \int_{S^1} | d_{s} u(x,y) | | d_{s} w(x,y) | | d_{1-2s} v(x,y) | \frac{dy}{| x-y |},$$
		and therefore, by using H\"older's inequality:
		\begin{align}
			\| R_{2}^{v,w} \|_{L^{\frac{2p}{p+2}}} 	&\lesssim \| u \|_{\dot{W}^{s, (1/s,3)}} \| w \|_{\dot{W}^{s, (1/s,3)}} \| v \|_{\dot{W}^{1-2s, (2p/(p+2-4sp),3)}} \notag \\
										&\lesssim \| u \|_{\dot{F}^{1/2}_{2,2}} \| w \|_{\dot{F}^{1/2}_{2,2}} \| v \|_{\dot{F}^{1/2}_{p,2}},
		\end{align}
		where we used Sobolev embeddings for Triebel-Lizorkin spaces in the last step. We emphasise that changing between the spaces $\dot{W}^{s,(p,q)}$ and $\dot{F}^{s}_{p,q}$ is possible by \cite{schiwang}, see Theorem \ref{schiwangthm1.4} and the comment afterwards in section 2, due to:
		$$\frac{1}{s} > \frac{3}{1 + 3s}, \quad \frac{2p}{p+2-4ps} > \frac{3}{4-6s},$$
		of which the first inequality is trivially true and the latter reduces to:
		$$8p - 12sp > 3p + 6 - 12ps \Rightarrow p > \frac{6}{5},$$
		which is trivially true for all $s$ because $p \geq 2$. So one merely has to take care that the H\"older exponents remain in $(1, +\infty)$, which is easily ensured as in Lemma \ref{refr1}. The second estimate is obtained along the same lines, merely changing Triebel-Lizorkin spaces. The estimates for $R_3$ follow from similar considerations by using Lemma \ref{taylorpi}, but this time by expanding around $u(x)$ instead of $u(y)$ which only requires minor modifications. Thus we are done.
	\end{proof}
	
	As in \cite{wettstein}, we may also find the corresponding fractional harmonic gradient flow to be therefore:
	\begin{align}
	\label{deductionoffracgradflow}
		u_{t} + (-\Delta)^{1/2} u 	&= d_{1/2} u \cdot d_{1/2} \left( d\pi^{\perp}(u) \right) + \div_{1/2} \left( \frac{A^{i}_{u}(du, du)(x,y)}{| x-y |^{1/2}} d\pi^{\perp}(u(y))_{ij} \right) \\
							&= \Omega_{u} \cdot d_{1/2} u + R_1 + R_2 + R_3,
	\end{align}
	where the latter equation uses the expressions for $\Omega_u, R_1, R_2, R_3$ introduced above.
	
	\subsubsection{Other useful Formulations for the Fractional Harmonic Gradient Flow}
	
	In the proof of existence of local solutions to the fractional harmonic gradient flow, we shall make use of a slightly different formulation for the $1/2$-harmonic gradient flow. Therefore, we list already here several equivalent (at least for sufficiently regular $u$) formulations of the main equation \eqref{deductionoffracgradflow}:\\
	
	The most basic formulation of the gradient flow we are studying is:
	$$u_t + (-\Delta)^{1/2} u \perp T_u N,$$
	which characterises solutions of the gradient flow with values in $N$ a.e.. This can be rewritten as:
	\begin{equation}
		u_t + (-\Delta)^{1/2} u = (-\Delta)^{1/2} \pi(u) - d\pi(u) (-\Delta)^{1/2} u,
	\end{equation}
	where we use for $u \in N$ almost everywhere:
	\begin{align}
		(-\Delta)^{1/2} \pi(u) - d\pi(u) (-\Delta)^{1/2} u	&= (-\Delta)^{1/2} u - d\pi(u) (-\Delta)^{1/2} u \notag \\
											&= \left( Id - d\pi(u) \right) (-\Delta)^{1/2} u \notag \\
											&= d\pi^{\perp}(u) (-\Delta)^{1/2} u \perp T_{u} N,
	\end{align}
	which is in fact the same as $d\pi^{\perp}(u) (u_t + (\Delta)^{1/2} u )$, since $u_t \in T_u N$ a.e.. A key advantage of this formulation is that we may encode the condition $u \in N$ a.e. directly inside the equation, see the proof of local existence of solutions below. A major drawback on the other hand is that this formulation obscures the compensation phenomena at hand. Lastly, there also exists a formulation analogous to \eqref{alternativeversionofequationhyper} by using arguments analogous to the ones needed to prove Lemma \ref{taylorpi}. This leads to:
	\begin{equation}
	\label{umformulierungenmitquadratischerform}
		u_{t} + (-\Delta)^{1/2} u = \sum_{k,l = 1}^{n} P.V. \int_{S^1} P^{kl}(u(x),u(y)) d_{1/2} u_k (x,y) d_{1/2} u_l (x,y) \frac{dy}{| x-y |},
	\end{equation}
	where $P^{kl} = (P^{kl}_1, \ldots, P^{kl}_n)$ with:
	\begin{align}
		&P_{j}^{kl}(u(x),u(y)) (u_{k}(x) - u_{k}(y)) (u_{l}(x) - u_{l}(y)) \notag \\ 
		&=  \sum_{k=1}^{n} \sum_{l=1}^{n} \int_{0}^{1} \int_{0}^{1} (t-1) \partial_{kl} \pi_{j} ((s-st) u(y) + (1 + st - s) u(x)) (u_{k}(x) - u_{k}(y)) (u_{l}(x) - u_{l}(y)) ds dt.
	\end{align}
	One observes the immediate similarity with \eqref{alternativeversionofequationhyper} as well as the quadratic structure of the RHS of this formulation. This formulation is again useful in proving uniqueness and regularity, but the connection to the formulation in \cite{wettstein} for $N = S^{n-1}$ is less apparent than with \eqref{deductionoffracgradflow}.

	\subsection{Uniqueness of Solutions to the $1/2$-Harmonic Gradient Flow}
	
	We discuss now the uniqueness of solutions to the $1/2$-harmonic gradient flow. Our approach is similar to the one in \cite{wettstein}. Thus, we first discuss uniqueness in a class of functions which have more regularity than strictly required to make sense of solutions and follow the approach in \cite{struwe} to establish uniqueness for such "strong solutions". Then, we expand this result to arbitrary energy class solution, i.e. the class of functions with minimal regularity for the fractional gradient flow to make sense, by using techniques similar to \cite{riv} to squeeze some more integrability out of solutions with small energy. In fact, we will rely on the techniques in \cite{daliopigati} which will turn out to be important to establish a slight gain in regularity by means of compensation phenomena tied to the emergence of an antisymmetric potential (notice that the appearance of such a potential from a slightly different point of view is already hinted at in Section 4.1 and in \cite{mazoschi}).
	
	\subsubsection{Uniqueness under Improved Regularity Assumptions}
	
	We are now able to turn to the study of the gradient flow associated with the fractional harmonic map with values in $N \subset \R^n$ being a general closed manifold. As previously in the case of a closed orientable hypersurface, let us assume that $u, v$ are two solutions to the fractional harmonic gradient flow taking a.e. values in a general closed $N \subset \R^n$. Clearly, this implies boundedness of $u,v$ due to the compactness of $N$. As before in Section 3, we assume that the following regularity conditions hold:
	\begin{equation}
	\label{regassgeneral}
		u,v \in L^{\infty}(\R_{+}; H^{1/2}(S^1)); \quad u_t, v_t \in L^{2}(\R_{+}; L^{2}(S^{1})); \quad u,v \in L^{2}_{loc}(\R_{+}; H^{1}(S^{1})),
	\end{equation}
	In addition, they satisfy the gradient flow associated with the $1/2$-harmonic map as described below:
	\begin{equation}
	\label{gradflowuvgeneral}
		w_t + (-\Delta)^{1/2} w = d_{1/2} w \cdot d_{1/2} \left( d\pi^{\perp}(w) \right) + \div_{1/2} \left( \frac{A^{i}_{w}(dw, dw)(x,y)}{| x-y |^{1/2}} d\pi^{\perp}(w(y))_{ij} \right),
	\end{equation}
	for both $w = u$ and $w = v$, together with the boundary condition $u(0, \cdot) = v(0, \cdot) = u_{0} \in H^{1/2}(S^1)$. We mention that, as we have seen in the previous subsection, the right hand side of \eqref{gradflowuvgeneral} is precisely the non-linearity associated with the fractional harmonic map equation for functions taking values in $N$. The equation \eqref{gradflowuvgeneral} could be derived along the same lines, cf. \cite{wettstein}.\\
	\noindent
	Let us present the main uniqueness statement in analogy to Theorem \ref{uniquehyper}:
	
	\begin{thm}
	\label{uniquegeneral}
		If $u,v$ both solve \eqref{gradflowuvgeneral} with the same initial datum $u_0 \in H^{1/2}(S^1;N)$ and we assume that:
		$$\| (-\Delta)^{1/4} u(t) \|_{L^{2}(S^1)}, \| (-\Delta)^{1/4} v(t) \|_{L^{2}(S^1)} \leq \| (-\Delta)^{1/4} u_{0} \|_{L^{2}(S^1)}, \quad \forall t \in \R_{+},$$
		then we have:
		$$u = v$$
	\end{thm}
	
	The proof is actually going to proceed analogous to the case of closed orientable hypersurfaces. We will provide some of the details below and the proof naturally extends to the case of solutions $u,v$ defined only on a subinterval $[0,T] \subset \R_{+}$.
	
	\begin{proof}
		First, we observe that since $d\pi^{\perp} = Id - d\pi$, we know:
		$$d_{1/2} \left( d\pi^{\perp}(u) \right)(x,y) = -d_{1/2} \left( d\pi(u) \right)(x,y),$$
		and we therefore would like to estimate the following:
		\begin{equation}
			d\pi(u(x)) - d\pi(u(y)) = \int_{0}^{1} d\left( d\pi \right)\left( (1-s) u(y) + s u(x) \right) ds \cdot (u(x) - u(y)),
		\end{equation}
		using Taylor expansion and understanding the differential $d (d\pi)$ as previously in Lemma \ref{taylorpi}. If we define:
		$$B_{u}(x,y) := \int_{0}^{1} d\left( d\pi \right)\left( (1-s) u(y) + s u(x) \right) ds,$$
		which is clearly bounded thanks to the smoothness of $\pi$ and its definition as an extension, we may rewrite:
		$$d_{1/2} u \cdot d_{1/2} \left( d\pi^{\perp}(u) \right) = -d_{1/2} u \cdot B_{u}(x,y) d_{1/2} u,$$
		which renders the fractional harmonic flow equation virtually the same as previously in the case of $N$ being an orientable closed hypersurface:
		$$u_t + (-\Delta)^{1/2} u = -d_{1/2} u \cdot B_{u}(x,y) d_{1/2} u + \div_{1/2} \left( \frac{A^{i}_{u}(du, du)(x,y)}{| x-y |^{1/2}} d\pi^{\perp}(u(y))_{ij} \right),$$
		and completely analogous for $v$.\\
		
		We may now use a decomposition as in \eqref{decompr1r2forhyper}. The first remainder involving $B_{u}(x,y)$ can be estimated analogous to \eqref{decompr1hyper} by obvious modifications of the estimates provided there. The second remainder, i.e. the fractional divergence, has a similar form to \eqref{decompr2equation} and may be decomposed as in \eqref{decompr2hyper}:
		\begin{align}
			&\frac{A^{i}_{u}(du, du)(x,y)}{| x-y |^{1/2}} d\pi^{\perp}(u(y))_{ij} d_{1/2} w - \frac{A^{i}_{v}(dv, dv)(x,y)}{| x-y |^{1/2}} d\pi^{\perp}(v(y))_{ij} d_{1/2} w \notag \\
				&= \frac{A^{i}_{u}(du, du)(x,y)}{| x-y |^{1/2}} d\pi^{\perp}(u(y))_{ij} d_{1/2} w - \frac{A^{i}_{v}(du, du)(x,y)}{| x-y |^{1/2}} d\pi^{\perp}(u(y))_{ij} d_{1/2} w \notag \\
				&+ \frac{A^{i}_{v}(du, du)(x,y)}{| x-y |^{1/2}} d\pi^{\perp}(u(y))_{ij} d_{1/2} w - \frac{A^{i}_{v}(du, du)(x,y)}{| x-y |^{1/2}} d\pi^{\perp}(v(y))_{ij} d_{1/2} w \notag \\
				&+ \frac{A^{i}_{v}(du, du)(x,y)}{| x-y |^{1/2}} d\pi^{\perp}(v(y))_{ij} d_{1/2} w - \frac{A^{i}_{v}(dv, du)(x,y)}{| x-y |^{1/2}} d\pi^{\perp}(v(y))_{ij} d_{1/2} w \notag \\
				&+ \frac{A^{i}_{v}(dv, du)(x,y)}{| x-y |^{1/2}} d\pi^{\perp}(v(y))_{ij} d_{1/2} w - \frac{A^{i}_{v}(dv, dv)(x,y)}{| x-y |^{1/2}} d\pi^{\perp}(v(y))_{ij} d_{1/2} w
		\end{align}
		One may now estimate these terms as in the case $N$ a hypersurface summand by summand. For example, in the first summand we may estimate the difference between $A_{u}$ and $A_{v}$ by $| w(x) | + | w(y) |$ and estimate one of the $d_{0} u$ by its $L^{\infty}$-bound to arrive at an estimate of the form:
		\begin{align*}
			\Big{|} \int_{S^1} \int_{S^1} \frac{A^{i}_{u}(du, du)(x,y)}{| x-y |^{1/2}} d\pi^{\perp}(u(y))_{ij} d_{1/2} w(x,y) &- \frac{A^{i}_{v}(du, du)(x,y)}{| x-y |^{1/2}} d\pi^{\perp}(u(y))_{ij} d_{1/2} w(x,y) dx dy\Big{|} \\
				&\lesssim \int_{S^{1}} | d_{1/2}u |(x) | d_{1/2} w|(x) | w(x) | dx
		\end{align*}
		Observe that we have to exchange labels $x$, $y$ at some point. The other terms can be estimated in a similar manner.\\
		
		The estimates are therefore obtained completely analogous to the case $N$ a hypersurface, applying Young's and H\"older's inequality, leading to an estimate for $w = u - v$ of the following form:
		\begin{align}
			\frac{1}{2} \| w(T) \|_{L^{2}(S^{1})} 	&+ \frac{1}{2} \int_{0}^{T} \| (-\Delta)^{1/4} w(t) \|_{L^{2}(S^{1})} dt \notag \\
										&\leq \tilde{C} \left( \int_{0}^{T} \int_{S^{1}} | w |^{4} dx dt \right)^{1/2} \cdot \left(  \int_{0}^{T} \int_{S^{1}} \left( | d_{1/2} u | + | d_{1/2} v | \right)^{4} dx dt \right)^{1/2}
		\end{align}
		which can then be treated as in $N = S^{n-1}$ in order to conclude uniqueness by an iteration argument, we refer to \cite{wettstein} for the details. The estimates invoked are independent of $S^{n-1}$ and rely on general properties of $u$ provided by \eqref{regassgeneral}, therefore generalising to our current situation.
	\end{proof}
	
	Naturally, as in the case of $N$ being a hypersurface, one may also use the formulation in \eqref{umformulierungenmitquadratischerform} to deduce uniqueness, see also Section 3 for some more details. The proof proceeds analogously and is omitted.

	\subsubsection{Small Initial Energy: Regularity Lemma in the Case $p > 2$}
	
	To deduce uniqueness of fractional gradient flow in energy class under some additional assumption, we have to establish some higher regularity for energy class solutions of the fractional gradient flow. In the case of the $n-1$-sphere, we managed to achieve slightly better regularity properties of the solution by using Wente-type estimates from \cite{mazoschi} and the invertibility of certain operators as in \cite{riv}. In the general case, we will also prove an existence and uniqueness result for a modified operator under higher integrability, which will then be useful to establish the higher regularity needed. In a second step, we shall show that higher integrability actually holds for solutions in ${H}^{1/2}(S^1)$ by means of compensation-compactness as in \cite{daliopigati}, completing the proof.\\
	
	A key Lemma is the following:
	
	\begin{lem}
	\label{casepg2}
		Let $u \in {H}^{1/2}(S^1)$ and $f \in L^2(S^1)$ and assume that $u$ solves:
		\begin{equation}
		\label{reglemmapg2caseequation}
			(-\Delta)^{1/2} u = d_{1/2} u \cdot d_{1/2} \left( d\pi^{\perp}(u) \right) + \div_{1/2} \left( \frac{A^{i}_{u}(du, du)(x,y)}{| x-y |^{1/2}} d\pi^{\perp}(u(y))_{ij} \right) + f,
		\end{equation}
		Then, if there exists a $p > 2$, such that (up to adding a constant):
		$$u \in \dot{F}^{1/2}_{p,2},$$
		then we immediately conclude:
		$$u \in \dot{F}^{1/2}_{q,2}(S^1), \quad \forall q \geq 2.$$
	\end{lem}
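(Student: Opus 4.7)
The approach is a bootstrap argument built on the rewriting from Section 4.1.2, namely
\begin{equation*}
  (-\Delta)^{1/2} u = \Omega_u \cdot d_{1/2} u + R_1 + R_2 + R_3 + f,
\end{equation*}
combined with the isomorphism $(-\Delta)^{1/2} : \dot{F}^{1/2}_{q,2}(S^1) \to \dot{F}^{-1/2}_{q,2}(S^1)$ for every $q \in (1,\infty)$. The plan is to show that, starting from $u \in \dot{F}^{1/2}_{p,2}$ with $p > 2$, each summand on the right hand side lies in $\dot{F}^{-1/2}_{\tilde p, 2}$ for some $\tilde p > p$; inverting the half-Laplacian then puts $u$ in $\dot{F}^{1/2}_{\tilde p, 2}$, and iterating this gain finitely many times yields $u \in H^1(S^1)$, whence $u \in \dot{F}^{1/2}_{q,2}$ for every $q \geq 2$ by the Triebel--Lizorkin Sobolev embedding.

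The three remainders and the source are handled directly using results already proven. Lemma \ref{refr1} with $v = w = u$ gives $R_1 \in \dot{F}^{-1/2}_{p,2}$, while Lemma \ref{refr2} (and the analogous statement for $R_3$ mentioned there) yields $R_2, R_3 \in L^{2p/(p+2)}(S^1)$. Duality combined with the Sobolev embedding $\dot{F}^{1/2}_{p/(p-1),2} \hookrightarrow L^{2p/(p-2)}$ on $S^1$ shows $L^{2p/(p+2)} \hookrightarrow \dot{F}^{-1/2}_{p,2}$, and the source $f \in L^2$ embeds into $\dot{F}^{-1/2}_{q,2}$ for every $q \geq 2$ by another Triebel--Lizorkin Sobolev inclusion. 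Consequently, none of these contributions obstructs the bootstrap.

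The heart of the argument is the quadratic antisymmetric-potential term $\Omega_u \cdot d_{1/2} u$. Because we assume $p > 2$ strictly, we do not need to invoke the gauge compensation of \cite{riv} or \cite{daliopigati}. The definition \eqref{defomega} of $\Omega_u$ together with the smoothness of $\pi$ yields the pointwise bound $|\Omega_u(x,y)| \lesssim |d_{1/2} u(x,y)|$, and Cauchy--Schwarz in the $y$ variable with respect to $dy/|x-y|$ gives
\begin{equation*}
  |\Omega_u \cdot d_{1/2} u|(x) \lesssim |d_{1/2} u|^2(x),
\end{equation*}
so that H\"older's inequality produces $\Omega_u \cdot d_{1/2} u \in L^{p/2}(S^1)$ with norm controlled by $\|u\|_{\dot{F}^{1/2}_{p,2}}^2$. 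For $2 < p < 4$ a direct computation (equivalently, duality against $\dot{F}^{1/2}_{2p/(3p-4),2} \hookrightarrow L^{p/(p-2)}$) gives $L^{p/2} \hookrightarrow \dot{F}^{-1/2}_{\tilde p,2}$ with $\tilde p = 2p/(4-p)$, and one checks $\tilde p > p$ precisely because $p > 2$. For $p \geq 4$ one has simply $L^{p/2} \hookrightarrow L^2$.

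Putting these inclusions together with the invertibility of $(-\Delta)^{1/2}$, one step of the bootstrap upgrades $p$ to $\tilde p = 2p/(4-p)$. The recursion $p_{k+1} = 2 p_k/(4 - p_k)$ is strictly increasing on $(2,4)$ and has no fixed point in $(2, 4]$, so the iterates necessarily exit the interval in finitely many steps; once $p_k \geq 4$ one obtains $(-\Delta)^{1/2} u \in L^2(S^1)$, hence $u \in H^1(S^1) \hookrightarrow \dot{F}^{1/2}_{q,2}(S^1)$ for every $q \geq 2$, concluding the argument. The main obstacle is of course the quadratic term $\Omega_u \cdot d_{1/2} u$: in the borderline case $p = 2$ the H\"older estimate above degenerates and one really needs the compensation machinery of \cite{daliopigati}; the strict gap $p > 2$ is exactly what allows us to avoid it here and to close the bootstrap with elementary H\"older and Sobolev inequalities.
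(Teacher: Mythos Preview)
Your overall strategy---a direct bootstrap exploiting the strict inequality $p>2$---is sound and in fact more elementary than the paper's argument, but the execution has a gap in the handling of $R_1, R_2, R_3$. You place $R_1 \in \dot{F}^{-1/2}_{p,2}$ (via the first estimate in Lemma~\ref{refr1}) and $R_2, R_3 \in L^{2p/(p+2)} \hookrightarrow \dot{F}^{-1/2}_{p,2}$ (via Lemma~\ref{refr2}), and then assert that ``none of these contributions obstructs the bootstrap''. But $\dot{F}^{-1/2}_{p,2}$ is \emph{not} contained in $\dot{F}^{-1/2}_{\tilde p,2}$ for $\tilde p > p$; inverting the half-Laplacian with these bounds returns only $u \in \dot{F}^{1/2}_{p,2}$, i.e.\ no improvement at all. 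The gain you correctly extract from the $\Omega$-term, $L^{p/2} \hookrightarrow \dot{F}^{-1/2}_{2p/(4-p),2}$, is then thrown away by the weaker remainder estimates.

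The repair is simple: do not pass to the decomposition $d_{1/2}u \cdot d_{1/2}(d\pi^\perp(u)) = \Omega \cdot d_{1/2}u + R_2 + R_3$, which replaces a quadratic expression by cubic ones and makes the estimates worse. Work instead with the right-hand side of \eqref{reglemmapg2caseequation} as written. Since $d(d\pi)$ is globally bounded, $|d_{1/2}(d\pi^\perp(u))(x,y)| \lesssim |d_{1/2}u(x,y)|$, so the first term is pointwise $\lesssim |d_{1/2}u|^2(x)$; and the proof of the second half of Lemma~\ref{refr1} already shows the divergence term $R_1$ satisfies the same pointwise bound. Hence the entire nonlinearity lies in $L^{p/2}$, and your iteration $p \mapsto 2p/(4-p)$ runs exactly as you describe until $p \geq 4$, at which point $(-\Delta)^{1/2}u \in L^2$.

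For comparison, the paper does something genuinely different: it approximates $\Omega$ and $u$ by smooth $\tilde\Omega, \tilde u$ so that $\|\Omega-\tilde\Omega\|_{L^2_{od}}$ and $\|u-\tilde u\|_{H^{1/2}}$ are small, moves the small-difference pieces to the left to form an operator $v \mapsto v + (-\Delta)^{-1/2}(\text{small perturbation})$ that is invertible on \emph{every} $\dot{F}^{1/2}_{q,2}$ simultaneously, and keeps on the right only the smooth approximants and $f$, which lie in $L^r$ for all $r<2$. A single inversion then yields all $q$ at once, with no iteration. Your route (once fixed) trades that Rivi\`ere-style perturbation machinery for a finite bootstrap using only H\"older and Sobolev embeddings, which is arguably cleaner under the standing hypothesis $p>2$.
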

	
	The proof relies on the techniques in \cite{riv} and using the remainders and their associated estimates established earlier, see Lemma \ref{refr1} and \ref{refr2}. One should observe that \eqref{gradflowuvgeneral} is naturally of the form \eqref{reglemmapg2caseequation} for almost every fixed time $t \in \R_{+}$. This is apparent by chosing $f = -\partial_t u$.
	
	\begin{proof}
	First, we write:
	$$(-\Delta)^{1/2} u = \Omega \cdot d_{1/2} u + R_1 + R_2 + R_3 + f,$$
	where $\Omega, R_1, R_2, R_3$ are all as previously introduced in this chapter, see \eqref{defomega} as well as \eqref{rest01}, \eqref{rest02}, \eqref{rest03}. We may approximate $\Omega$ by a smooth $\tilde{\Omega}$ vanishing in a neighbourhood of the diagonal and lying in $L^{2}_{od}(S^1 \times S^1)$ and $u$ by a smooth function $\tilde{u}$ in $H^{1/2}(S^1)$, the norms of the differences being arbitrarily small in the respective spaces:
	$$\| \Omega - \tilde{\Omega} \|_{L^{2}_{od}}, \| u - \tilde{u} \|_{H^{1/2}} < \varepsilon,$$
	for $\varepsilon > 0$ to be determined later. Then, we may rewrite this equation as:
	\begin{align}
	\label{maineqgeninv}
		(-\Delta)^{1/2} u + (\tilde{\Omega} - \Omega) \cdot d_{1/2} u 	&+ R_{1}^{\tilde{u} - u, u} + R_{2}^{\tilde{u} - u, u} + R_{2}^{\tilde{u} - u, u} \notag \\
														&= \tilde{\Omega} \cdot d_{1/2} u + R_{1}^{\tilde{u}, u} + R_{2}^{\tilde{u}, u} + R_{3}^{\tilde{u}, u} + f
	\end{align}
	As for $N = S^{n-1}$, we may restrict our attention to the case of vanishing Fourier coefficients by removing the averages (i.e. $0$-th order Fourier coefficients) associated with each summand individually in \eqref{reglemmapg2caseequation}. This means, in order to render the fractional Laplacian invertible, we consider $v = u- \hat{u}(0)$ instead of $v = u$ below and remove averages for the contributions $R_1, R_2, R_3$ and $\Omega \cdot d_{1/2} v$. Let us observe that:
	\begin{align}
	\label{invertibleopgen}
		v \mapsto 		&(-\Delta)^{-1/2}\left( (-\Delta)^{1/2} v + (\tilde{\Omega} - \Omega) \cdot d_{1/2} v + R_{1}^{\tilde{u} - u, v} + R_{2}^{\tilde{u} - u, v} + R_{3}^{\tilde{u} - u, v} \right) \notag \\
					&= v + (-\Delta)^{-1/2}\left( (\tilde{\Omega} - \Omega) \cdot d_{1/2} v \right) + (-\Delta)^{-1/2}\left( R_{1}^{\tilde{u} - u, v} + R_{2}^{\tilde{u} - u, v} + R_{3}^{\tilde{u} - u, v} \right) 
	\end{align}
	defines an invertible mapping from $\dot{F}^{1/2}_{p,2}$ to itself, if we assume that $\tilde{\Omega}$ and $\tilde{u}$ are sufficiently good approximations. Use the estimates in Lemma \ref{refr1} and \ref{refr2} as well as the estimate:
	$$\| (\tilde{\Omega} - \Omega) \cdot d_{1/2} v \|_{L^{\frac{2p}{p+2}}} \lesssim \| \Omega - \tilde{\Omega} \|_{L^{2}_{od}} \| v \|_{\dot{F}^{1/2}_{p,2}},$$
	and the continuity of the embedding $L^{\frac{2p}{p+2}}(S^1) \hookrightarrow \dot{F}^{-1/2}_{p,2}(S^1)$ to deduce that the maps are sufficiently small in operatornorm, rendering \eqref{invertibleopgen} a small perturbation of the identity map and therefore invertible itself. One may also observe that the RHS of \eqref{maineqgeninv} lies in $L^{q}(S^1)$ for any $q < 2$ (using smoothness of the approximating terms), which is in the dual of $\dot{F}^{-1/2}_{p,2}$ thanks to Sobolev embeddings. This implies that there exists a unique solution $v \in \dot{F}^{1/2}_{p,2}$:
	\begin{align}
	\label{tosolve01}
		v + (-\Delta)^{-1/2} 	&\left( (\tilde{\Omega} - \Omega) \cdot d_{1/2} v + R_{1}^{\tilde{u} - u, v} + R_{2}^{\tilde{u} - u, v} + R_{2}^{\tilde{u} - u, v} \right) \notag \\
									&= (-\Delta)^{-1/2} \left( \tilde{\Omega} \cdot d_{1/2} u + R_{1}^{\tilde{u}, u} + R_{2}^{\tilde{u}, u} + R_{3}^{\tilde{u}, u} + f \right)
	\end{align}
	This shows that if $u \in \dot{F}^{1/2}_{p,2}(S^1)$ for some $p > 2$, it also lies in this space for any $q \geq 2$ by existence and uniqueness of solutions to the equation above due to the invertibility of the map and the natural inclusions $\dot{F}^{1/2}_{p,2} \hookrightarrow \dot{F}^{1/2}_{q,2}$ for every $p \geq q$, keeping in mind that the RHS of \eqref{tosolve01} is independent of $v$. This concludes the proof of Lemma \ref{casepg2}.
	\end{proof}
	
	\subsubsection{Small Initial Energy: Regularity Lemma in the Case $p = 2$}
	
	The missing step in order to be able to apply Lemma \ref{casepg2} to our energy class solution of the fractional harmonic gradient flow at a fixed time is provided by the following:
	
	\begin{lem}
	\label{casepe2}
		Let $u \in {H}^{1/2}(S^1) \cap L^{\infty}(S^1)$ and $f \in L^2(S^1)$ and assume that $u$ solves the non-local PDE:
		$$(-\Delta)^{1/2} u = d_{1/2} u \cdot d_{1/2} \left( d\pi^{\perp}(u) \right) + \div_{1/2} \left( \frac{A^{i}_{u}(du, du)(x,y)}{| x-y |^{1/2}} d\pi^{\perp}(u(y))_{ij} \right) + f,$$
		Then there exists $p > 2$, such that:
		$$u \in \dot{F}^{1/2}_{p,2}(S^1).$$
	\end{lem}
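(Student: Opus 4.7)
The plan is to exploit the reformulation
$$(-\Delta)^{1/2} u = \Omega_u \cdot d_{1/2} u + R_1 + R_2 + R_3 + f$$
established in Section 4.1, where $\Omega_u = -\Omega_u^T$ is the antisymmetric potential of \eqref{defomega}, and to extract a small gain of integrability above the critical exponent $p=2$ by means of the Morrey-type compensation-compactness machinery of \cite{daliopigati}. The source term $f \in L^2$ is harmless: on the one-dimensional circle $(-\Delta)^{-1/2} f \in H^1(S^1) \hookrightarrow \dot{F}^{1/2}_{q,2}(S^1)$ for every $q < \infty$, so it contributes only a regular perturbation and may be absorbed on the right hand side.

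First I would localize the equation. Fix $x_0 \in S^1$, pick a small radius $r>0$, and multiply by a smooth cutoff $\chi$ supported near $x_0$. By absolute continuity of the $H^{1/2}$-energy, both $\| (-\Delta)^{1/4} u \|_{L^2(B_{2r}(x_0))}$ and $\| \Omega_u \|_{L^2_{od}(B_{2r} \times B_{2r})}$ can be made arbitrarily small by choosing $r$ small enough. Under this smallness, the remainders $R_1, R_2, R_3$ are controlled by Lemmas \ref{refr1} and \ref{refr2} specialised to $v=w=u$ with $p=2$: they are quadratic in the local $\dot{F}^{1/2}_{2,2}$ norm and therefore act as lower order perturbations of the main term $\Omega_u \cdot d_{1/2} u$.

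The core step is then the Morrey decay estimate of \cite{daliopigati}: exploiting the antisymmetry of $\Omega_u$ and the smallness of its local $L^2_{od}$ norm, one establishes a uniform dyadic decay of the form
$$\int_{B_r(x_0)} | (-\Delta)^{1/4} u |^2 \, dx \lesssim r^{\beta},$$
for some $\beta > 0$, valid for all $x_0 \in S^1$ and all sufficiently small $r$. In other words, $(-\Delta)^{1/4} u$ belongs to a non-trivial Morrey space $L^{2,\beta}(S^1)$. By an Adams-type embedding for Morrey spaces on the circle, this upgrades to $(-\Delta)^{1/4} u \in L^p(S^1)$ for some $p > 2$, and via the identification $\dot{F}^{1/2}_{p,2} = \dot{W}^{1/2,(p,2)}$ granted by Theorem \ref{schiwangthm1.4} this is precisely the conclusion $u \in \dot{F}^{1/2}_{p,2}(S^1)$.

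The main obstacle is the Morrey decay step itself. In the local framework of Rivi\`ere \cite{riv}, a gauge $P \in H^1(SO(n))$ is constructed to conjugate the antisymmetric potential into a divergence term, after which Wente-type arguments close the loop; no direct algebraic analogue exists for $d_{1/2}$ and $\div_{1/2}$, and one must instead rely on the bilinear compensation estimates in Morrey spaces developed in \cite{daliopigati}, iterated over dyadic scales. Controlling the additional contributions from $R_1, R_2, R_3$ and from the localization cutoff on each scale, so that they do not destroy the smallness required for the iteration, is the technically delicate point; fortunately the quadratic bounds of Lemmas \ref{refr1} and \ref{refr2} are tailored precisely for this purpose. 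Once the Morrey step is secured, the conclusion follows, and a subsequent application of Lemma \ref{casepg2} then delivers $u \in \dot{F}^{1/2}_{q,2}(S^1)$ for every $q \geq 2$.
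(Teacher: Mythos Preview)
Your overall strategy --- exploit the antisymmetric potential $\Omega_u$, apply the Morrey-decay machinery of \cite{daliopigati}, and upgrade via Adams' embedding --- is the same as the paper's. There are, however, two implementation differences worth flagging.

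First, the paper does not work directly on $S^1$. The compensation estimates and Morrey iteration in \cite{daliopigati} (their Theorem~D.7 and Corollary~D.8) are developed on $\mathbb{R}$, so the paper first pulls $u$ back to $w = u\circ\Pi^{-1}$ via the stereographic projection, rewrites the equation at the level of $v=(-\Delta)^{1/4}w$ using the three-term commutators $T$, $T^\ast$ (rather than the $\Omega_u\cdot d_{1/2}u + R_1+R_2+R_3$ decomposition you propose), and only then runs the Morrey argument, treating the inhomogeneity $f$ by solving $(-\Delta)^{1/2}w_0=f$ on $S^1$ and pulling its contribution along. Your plan to localise on $S^1$ with a cutoff and use the $R_j$ bounds of Lemmas~\ref{refr1}--\ref{refr2} is reasonable in spirit, but you would need to either redo the \cite{daliopigati} iteration on the circle or justify that it transfers; the paper sidesteps this by moving to $\mathbb{R}$.

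Second, the passage back to $S^1$ is less immediate than you suggest. The output of the Morrey step on $\mathbb{R}$ is only $(-\Delta)^{1/4}w\in L^p_{loc}(\mathbb{R})$, and the paper then splits $w=\tilde w+(w-\tilde w)$ with $\tilde w=(-\Delta)^{-1/4}(\rho(-\Delta)^{1/4}w)$, uses that $w-\tilde w$ is $1/4$-harmonic on a ball (hence $C^1$ by interior regularity for the fractional Laplacian, which requires the $L^\infty$ hypothesis on $u$), and from this controls the full $\dot W^{1/2,(p,2)}$ seminorm locally before pulling back to $S^1$ and patching via a finite cover. Your shortcut ``$(-\Delta)^{1/4}u\in L^p(S^1)$ gives $u\in\dot F^{1/2}_{p,2}(S^1)$'' is correct as stated, but you never actually obtain $(-\Delta)^{1/4}u\in L^p(S^1)$ globally from the Morrey decay alone; some local-to-global step is needed, and this is where the paper spends its effort.
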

	
	Combining Lemma \ref{casepg2} and \ref{casepe2} yields the following corollary:
	
	\begin{cor}
	\label{mainreggeneral}
		Let $u \in {H}^{1/2}(S^1) \cap L^{\infty}(S^1)$ and $f \in L^2(S^1)$ and assume that $u$ solves:
		$$(-\Delta)^{1/2} u = d_{1/2} u \cdot d_{1/2} \left( d\pi^{\perp}(u) \right) + \div_{1/2} \left( \frac{A^{i}_{u}(du, du)(x,y)}{| x-y |^{1/2}} d\pi^{\perp}(u(y))_{ij} \right) + f,$$
		Then we know:
		$$u \in \dot{F}^{1/2}_{p,2}(S^1), \quad \forall p \geq 2.$$
	\end{cor}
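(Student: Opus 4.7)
The plan is to invoke the two preceding lemmas in immediate succession: Lemma \ref{casepe2} supplies the initial integrability gain from the critical exponent $p=2$ to some $p>2$, and Lemma \ref{casepg2} then bootstraps this to arbitrary $q \geq 2$. Since the hypotheses of the corollary coincide exactly with the hypotheses of both lemmas (namely $u \in H^{1/2}(S^1) \cap L^\infty(S^1)$, $f \in L^2(S^1)$, and $u$ solves the stated non-local PDE), no additional preparation is needed; the proof will consist of essentially one sentence per application.

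First, I would verify that Lemma \ref{casepe2} applies verbatim to obtain some exponent $p_0 > 2$ such that $u \in \dot{F}^{1/2}_{p_0, 2}(S^1)$. This is the content of the compensation-compactness argument for the critical case and it provides the crucial first gain. Next, with this improved integrability in hand, I would feed $u$ back into Lemma \ref{casepg2}, which is formulated precisely for inputs $u$ already known to lie in $\dot{F}^{1/2}_{p,2}$ for some $p>2$, and whose conclusion is that $u \in \dot{F}^{1/2}_{q,2}(S^1)$ for every $q \geq 2$. Combining these two statements gives the corollary directly.

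There is no genuine obstacle here; the work has already been done in proving the two lemmas separately, and the corollary simply records that they chain together. The only minor point worth remarking on in the write-up is that the hypotheses needed for Lemma \ref{casepg2} (membership in $\dot{F}^{1/2}_{p,2}$ for some $p>2$) are exactly what Lemma \ref{casepe2} delivers, so no interpolation or intermediate estimate is required to bridge them. One could additionally note that by the Sobolev embedding properties of Triebel--Lizorkin spaces (see Section 2), this corollary yields in particular that $u$ is continuous and enjoys much stronger pointwise control than the initial energy-class hypothesis suggests, which is precisely what is needed to invoke the uniqueness result Theorem \ref{uniquegeneral} for energy-class solutions under the small-energy assumption.
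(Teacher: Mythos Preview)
Your proposal is correct and matches the paper's approach exactly: the paper states this corollary as an immediate consequence of ``combining Lemma \ref{casepg2} and \ref{casepe2}'' without giving a separate proof, and your two-step chaining (first Lemma \ref{casepe2} for the initial gain $p_0>2$, then Lemma \ref{casepg2} to bootstrap to all $q\geq 2$) is precisely that combination.
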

	
	We observe that this corollary is essentially the analogue of \cite[Lemma 3.4]{wettstein} in the case $N = S^{n-1}$ and thus should suffice in order to conclude our investigation into the uniqueness of the fractional harmonic gradient flow with small initial energy. Indeed, we shall show that the same arguments carry over in the next subsection.
	
	\begin{proof}
	Let us turn to the proof of Lemma \ref{casepe2}: The technique we will be using relies on the ideas of \cite{daliopigati}, in particular \cite[Theorem D.7]{daliopigati} and \cite[Corollary D.8]{daliopigati}, we shall only point out the differences. Some more details are included in Appendix A for the reader's convenience:
	
	The main difference lies in the inclusion of a perturbation term in $L^2(S^1)$ and some minor changes to get higher local integrability for the fractional gradients by means of localisations. Indeed, the argument proceeds as in \cite[Theorem D.7]{daliopigati}, the main change being the aforementioned inclusion of the perturbation $f$ which leads to a summand of the form:
	$$\tilde{f} := \frac{f \circ \Pi^{-1}}{1+x^2} \in L^{q}(\R),$$
	for all $q \in [1,2]$, which is easily seen by using integrability of the function $1/(1+x^2)$ and the change of variables formula in combination with $f \in L^{2}(S^1)$. This term can be dealt with by solving the equation $(-\Delta)^{1/2} w_0 = f$ on the circle and using the stereographic projection to arrive at a solution of the corresponding problem on the real line, compare this with the proof of \cite[Lemma D.2]{daliopigati} for a more detailed argument.
	
	The pull-back of $w_0$ shall be denoted by $\tilde{w}_{0}$ and we see that $(-\Delta)^{1/4} \tilde{w}_{0}$ is then, since $(-\Delta)^{1/4} w_0 \in L^{p}(S^1)$ by Sobolev embeddings, locally in $L^p$ on the real line and lies in $L^{2}(\R)$ by the arguments in Claim 1 of the proof of Lemma D.2 in \cite{daliopigati}. In fact, since $w_0$ is in $H^1(S^1)$, we may immediately see, by using the chain rule, that:
	$$\tilde{w}_{0} \in \dot{W}^{1,p}(\R), \quad \forall 1 \leq p \leq 2$$
	This is a consequence of computing the derivative directly and observing the improved integrability provided by the multiplication by the derivative of the stereographic projection $1/(1+x^2)$. This also implies, by Sobolev embeddings, that 
	$$(-\Delta)^{1/4} \tilde{w}_{0} \in L^{q}(\R), \quad \forall 2 \leq q < +\infty$$
	Therefore, we have that $(-\Delta)^{1/4} \tilde{w}_{0}$ has sufficently good higher integrability for the arguments in Step 2 of Theorem D.7 of \cite{daliopigati} to work. The Morrey-estimate as in \cite{daliopigati} can be recovered by using $(-\Delta)^{1/4} \tilde{w}_{0}$ instead of $(-\Delta)^{-1/4}(Q\tilde{f})$ for an appropriate $Q$ as in \cite[Theorem D.7]{daliopigati}, with uniform bounds for all $x \in \R$, see also Appendix A for some of the key changes.

	Following the arguments in the proof of \cite[Lemma D.7]{daliopigati}, we arrive at the same conclusion in our scenario. In fact, if we continue on and keep arguing as in \cite[Corollary D.8]{daliopigati}, we find:
	$$(-\Delta)^{1/4} w \in L^{p}_{loc}(\R),$$
	for some $p > 2$, where $w$ is the pull-back of $u$ under the stereographic projection $\Pi$, i.e.:
	$$w := u \circ \Pi$$
	This follows by the conclusion of \cite[Theorem D.7]{daliopigati} together with Adam's embedding, as well as some arguments relating to the Riesz operator, see \cite[Corollary D.8]{daliopigati}. Thus, using a compactly supported $\rho$ which is equal to $1$ on some ball $B$, we introduce:
	$$\tilde{w} := (-\Delta)^{-1/4} \left( \rho (-\Delta)^{1/4} w \right)$$
	Observing that $w - \tilde{w}$ is $1/4$-harmonic on $B$, we may use the fractional mean value property (see for example \cite{bucur}) of the $1/4$-Laplacian or the associated regularity theory (see for example \cite{jeonpetro} and the references therein, the results being applicable due to the integrability properties of $w$ and $\tilde{w}$) to deduce that:
	$$w - \tilde{w} \text{ is at least $C^1$ on } B_{3/4},$$
	with $B_{3/4}$ being the ball with the same center as $B$, but $3/4$ of the radius of $B$. Here we actually have to make use of $w \in L^{\infty}$, keeping in mind that $w = u \circ \Pi$, as well as $\tilde{w} \in L^{q}(\R)$ for some $1 < q < 2$ by Hardy-Littlewood-Sobolev's inequality to guarantee that the mean value integrals exist. This allows us now to estimate:
	$$\left( \int_{B_{1/2}} \left( \int_{B_{1/2}} \frac{| w(x) - w(y) |^2}{| x-y |^2} dy \right)^{p/2} dx \right)^{1/p} < \infty,$$
	simply by observing that $\tilde{w}$ satisfies such an estimate and using smoothness of the difference $w - \tilde{w}$ as well as compactness of the ball combined with the usual triangle inequality. Using the stereographic projection to pull back and estimating the emerging Jacobians on the compact subsets, the same holds on the circle $S^1$. Using $B$ large enough and arguing the same way for the stereographic projection at other points and taking a finite covering of $S^1$ by such balls, we find that:
	$$\left( \int_{S^{1}} \left( \int_{S^{1}} \frac{| u(x) - u(y) |}{| x-y |^2} dy \right)^{p/2} dx \right)^{1/p} < \infty,$$
	for some $p > 2$. This is precisely the desired result.
	\end{proof}
	
	We emphasise that the proof implicitely, by referring to the techniques in \cite{daliopigati}, and heavily relies on commutator estimates and hence on compensation phenomena in the non-local setting. This dependence is somewhat more obvious in \cite{wettstein} and $N= S^{n-1}$ where we apply the fractional Wente-type estimate from \cite{mazoschi} directly much in the same way \cite{riv} uses the classical Wente estimate.
	
	\subsubsection{Small Initial Energy: Application of the Regularity Lemmas}
	
	We are finally able to apply the results from the previous subsections to the case of an energy class solution $u$ of the $1/2$-harmonic gradient flow with values in $N \subset \R^n$ a closed manifold. The steps are completely analogous to the case $N = S^{n-1}$ and \cite{riv}, cf. \cite{wettstein}. We will be proving the following as the main result, which we have already mentioned in the introductionas part of Theorem \ref{mainresultsection1}:
	
	\begin{thm}
	\label{uniqueinenergygeneral}
		Let $u: \R_{+} \times S^1 \to N \subset \R^n$, $N$ a closed and smooth manifold, be a solution of the weak fractional harmonic gradient flow \eqref{gradflowuvgeneral} with initial datum $u_0 \in H^{1/2}(S^1;N)$ and satisfying the following regularity assumptions:
		$$u \in L^{\infty}(\R_{+}; H^{1/2}(S^1)); \quad u_t \in L^{2}(\R_{+}; L^{2}(S^{1}))$$
		Then there exists $\varepsilon > 0$, such that among all such $u$ satisfying the smallness condition:
		$$\| ( - \Delta)^{1/4} u(t) \|_{L^2(S^1)} \leq \varepsilon, \quad \forall t \in \R_{+},$$
		the solution to the fractional harmonic gradient flow \eqref{gradflowuvgeneral} with initial datum $u_0$ is unique.
	\end{thm}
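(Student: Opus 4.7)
The plan is to reduce Theorem \ref{uniqueinenergygeneral} to the uniqueness result under improved regularity, namely Theorem \ref{uniquegeneral}. The gap between the two statements is that the hypotheses of Theorem \ref{uniqueinenergygeneral} control $u$ only in $L^{\infty}(\R_+; H^{1/2}(S^1))$ with $u_t \in L^{2}(\R_+;L^{2}(S^1))$, whereas Theorem \ref{uniquegeneral} additionally requires $u \in L^{2}_{loc}(\R_+; H^{1}(S^1))$. Once I upgrade any competing solution $u$ (and similarly $v$) to this regularity, Theorem \ref{uniquegeneral} immediately yields $u\equiv v$.

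The upgrade is obtained by freezing time and invoking Corollary \ref{mainreggeneral}. For almost every $t \in \R_+$ one has $u(t,\cdot) \in H^{1/2}(S^{1};N) \subset L^{\infty}(S^1)$ by compactness of $N$, and $u_t(t,\cdot) \in L^{2}(S^1)$, so the flow equation \eqref{gradflowuvgeneral} rearranges into the elliptic identity
\begin{equation*}
(-\Delta)^{1/2}u(t,\cdot) = d_{1/2}u(t,\cdot)\cdot d_{1/2}(d\pi^{\perp}(u(t,\cdot))) + \div_{1/2}\!\left(\frac{A^{i}_{u(t)}(du(t),du(t))(x,y)}{|x-y|^{1/2}}d\pi^{\perp}(u(t,y))_{ij}\right) + f(t,\cdot),
\end{equation*}
with $f(t,\cdot) := -u_t(t,\cdot) \in L^{2}(S^1)$. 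Choosing the smallness parameter $\varepsilon$ at least as small as the thresholds required in Lemmas \ref{casepg2}, \ref{casepe2} (and Theorem \ref{uniquegeneral}), Corollary \ref{mainreggeneral} applies and produces $u(t,\cdot) \in \dot{F}^{1/2}_{p,2}(S^1)$ for every $p \geq 2$, with a quantitative bound controlled by $\|u(t)\|_{H^{1/2}}$ and $\|f(t)\|_{L^{2}}$.

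I would then specialise to $p=4$ and plug the resulting information back into the equation. By Lemmas \ref{refr1} and \ref{refr2}, the right-hand side of \eqref{gradflowuvgeneral} belongs to $L^{2}(S^1)$ with norm controlled by $\|u(t)\|_{\dot{F}^{1/2}_{4,2}}^{2}$, so combining with $u_t(t,\cdot) \in L^{2}(S^1)$ one obtains $(-\Delta)^{1/2}u(t,\cdot)\in L^{2}(S^1)$, i.e.\ $u(t,\cdot) \in H^{1}(S^{1})$ for a.e.\ $t$. Squaring the quantitative bounds and integrating over $[0,T]$ using $u \in L^{\infty}(\R_+;H^{1/2})$ and $u_t \in L^{2}(\R_+;L^{2})$ gives $u \in L^{2}([0,T];H^{1}(S^{1}))$ for every $T>0$; the same for $v$. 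Theorem \ref{uniquegeneral} then concludes $u \equiv v$.

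The main obstacle is making the application of Corollary \ref{mainreggeneral} \emph{quantitative enough that $t$-integrability of the resulting bounds follows for free} from the input hypotheses. The perturbation argument behind Lemma \ref{casepg2} is straightforward to control, since once $\varepsilon$ is small the inverse of the operator in \eqref{invertibleopgen} is bounded uniformly across time slices. The subtler point is Lemma \ref{casepe2}: its proof passes through the Morrey-type machinery of \cite{daliopigati}, a compactly supported localisation on $\R$, and a covering argument transferred back to $S^1$ via the stereographic projection. One has to verify that the constants appearing in that chain depend only on $\|u(t)\|_{H^{1/2}}$ and $\|u_t(t)\|_{L^{2}}$ (and not, say, on a modulus of concentration of the fractional gradient at time $t$), so that the final estimate can be squared and integrated in $t$. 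Apart from this bookkeeping, the scheme runs parallel to the $N=S^{n-1}$ argument in \cite{wettstein}, with Corollary \ref{mainreggeneral} playing the role that the direct Wente-type estimate of \cite{mazoschi} played there.
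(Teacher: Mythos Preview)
Your overall plan coincides with the paper's: reduce to Theorem \ref{uniquegeneral} by showing that any energy-class solution with small energy actually lies in $L^{2}_{loc}(\R_{+};H^{1}(S^1))$, via Corollary \ref{mainreggeneral} applied at fixed times with $f=-u_t$. The gap you correctly flag --- obtaining a \emph{quantitative}, $t$-integrable bound on $\|u(t)\|_{H^{1}}$ --- is precisely the content of the argument, and the paper resolves it not by tracking constants through the Morrey machinery of Lemma \ref{casepe2}, but by a much simpler absorption trick that you have not identified.

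Namely, Corollary \ref{mainreggeneral} is used only \emph{qualitatively}, to ensure $u(t)\in \dot{F}^{1/2}_{4,2}(S^1)$ and hence $u(t)\in H^{1}(S^1)$ for a.e.\ $t$. Once this is known, the equation itself and the estimates in Lemma \ref{refr1} give the a priori bound
\[
\|u(t)\|_{H^{1}}^{2} \le C\big(1 + \||d_{1/2}u(t)|^{2}\|_{L^{2}}^{2} + \|\partial_{t}u(t)\|_{L^{2}}^{2}\big).
\]
The key step is then the Ladyzhenskaya-type interpolation
\[
\||d_{1/2}u(t)|^{2}\|_{L^{2}}^{2} \sim \|u(t)\|_{\dot{F}^{1/2}_{4,2}}^{4} \sim \|(-\Delta)^{1/4}u(t)\|_{L^{4}}^{4} \le C'\,\|(-\Delta)^{1/4}u(t)\|_{L^{2}}^{2}\,\|u(t)\|_{H^{1}}^{2},
\]
so that the smallness condition $\|(-\Delta)^{1/4}u(t)\|_{L^{2}}\le\varepsilon$ turns the middle term into $C'\varepsilon^{2}\|u(t)\|_{H^{1}}^{2}$, which for $\varepsilon$ small enough is absorbed into the left side. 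This yields $\|u(t)\|_{H^{1}}^{2}\lesssim 1+\|\partial_{t}u(t)\|_{L^{2}}^{2}$, which is manifestly in $L^{1}_{loc}$ in $t$. The constants emerging from the proof of Lemma \ref{casepe2} therefore never need to be quantified; your proposed ``bookkeeping'' through the stereographic projection and the Morrey estimates is unnecessary. This interpolation--absorption closure is the missing idea in your proposal.
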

	
	In particular, if we assume that the $1/2$-energy:
	$$E_{1/2}(u(t)) := \frac{1}{2} \| (-\Delta)^{1/4} u(t) \|_{L^{2}(S^1)}^{2},$$
	is non-increasing in time, as motivated in the case $N = S^{n-1}$ by \cite[Lemma 3.3]{wettstein}, the smallness condition could be rephrased as:
	$$\| ( - \Delta)^{1/4} u_0 \|_{L^2(S^1)} \leq \varepsilon$$
	An important first step in the proof of Theorem \ref{uniqueinenergygeneral} is the following:
	
	\begin{prop}
		Let $u: \R_{+} \times S^1 \to N \subset \R^n$ satisfy the following regularity assumptions:
		$$u \in L^{\infty}(\R_{+}; H^{1/2}(S^1)); \quad u_t \in L^{2}(\R_{+}; L^{2}(S^{1}))$$ 
		Moreover, assume $u$ solves the half-harmonic gradient flow equation \eqref{gradflowuvgeneral}. Then for almost every time $t > 0$, we have:
		$$u(t) \in H^{1}(S^1)$$
	\end{prop}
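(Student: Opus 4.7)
The plan is to reduce the problem to a time-slice application of Corollary \ref{mainreggeneral}, and then to observe that the quadratic $L^2$ estimates provided by Lemmas \ref{refr1} and \ref{refr2} upgrade the resulting $\dot{F}^{1/2}_{4,2}$ regularity of the slice to the bound $(-\Delta)^{1/2} u(t) \in L^2(S^1)$, which is equivalent to $u(t) \in H^1(S^1)$. First I would invoke Fubini on the assumption $u_t \in L^2(\R_{+}; L^2(S^1))$: for almost every $t > 0$, $u_t(t,\cdot) \in L^2(S^1)$. At any such $t$, the function $u(t)$ lies in $H^{1/2}(S^1) \cap L^{\infty}(S^1)$, the $L^{\infty}$ bound being automatic from the compactness of $N$. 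Rearranging the gradient flow equation \eqref{gradflowuvgeneral} at this fixed time and setting $f := -u_t(t,\cdot) \in L^2(S^1)$ places $u(t)$ precisely under the hypotheses of Corollary \ref{mainreggeneral}, which then yields $u(t) \in \dot{F}^{1/2}_{p,2}(S^1)$ for every $p \geq 2$; in particular $u(t) \in \dot{F}^{1/2}_{4,2}$.

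Next I estimate each summand of the equivalent reformulation from Section 4.1.2,
$$(-\Delta)^{1/2} u(t) = -u_t(t) + \Omega_{u(t)} \cdot d_{1/2} u(t) + R_1 + R_2 + R_3,$$
in $L^2(S^1)$. Lemma \ref{refr1} applied with $v = w = u(t) \in \dot{F}^{1/2}_{4,2}$ gives $R_1 \in L^2(S^1)$ with norm controlled by $\| u(t) \|_{\dot{F}^{1/2}_{4,2}}^2$; Lemma \ref{refr2} similarly places $R_2$ (and, by the analogous version alluded to there, $R_3$) in $L^2(S^1)$. For the antisymmetric-potential term, the fundamental theorem of calculus together with the smoothness of the extended closest-point projection $\pi$ yields the pointwise bound $| d_{1/2}(d\pi^{\perp}(u(t)))(x,y) | \lesssim | d_{1/2} u(t)(x,y) |$, from which, using boundedness of $d\pi$, also $| \Omega_{u(t)}(x,y) | \lesssim | d_{1/2} u(t)(x,y) |$. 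Consequently $| \Omega_{u(t)} |(x) \lesssim | d_{1/2} u(t) |(x) \in L^4(S^1)$ via the characterisation $\bigl\| | d_{1/2} u | \bigr\|_{L^4} \simeq \| u \|_{\dot{F}^{1/2}_{4,2}}$, and Cauchy--Schwarz inside the convolution-type integral followed by H\"older $L^4 \cdot L^4 \hookrightarrow L^2$ outside gives $\Omega_{u(t)} \cdot d_{1/2} u(t) \in L^2(S^1)$. Summing all contributions and inverting $(-\Delta)^{1/2}: \dot{F}^1_{2,2} \to L^2$ delivers $u(t) \in H^1(S^1)$.

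The main obstacle has already been absorbed into the preceding regularity lemmas rather than into the present proposition: Lemma \ref{casepe2} produces the initial gain of integrability above $\dot{F}^{1/2}_{2,2}$ via the delicate compensation and commutator machinery of \cite{daliopigati}, and Lemma \ref{casepg2} then performs the clean bootstrap once a foothold $p > 2$ is available. The remaining work here is essentially bookkeeping, the only mild subtlety being the control of the antisymmetric potential $\Omega_{u(t)}$ through the chain-rule bound above, which is what couples the $\dot{F}^{1/2}_{4,2}$ output of Corollary \ref{mainreggeneral} to an $L^2$ estimate on $(-\Delta)^{1/2} u(t)$.
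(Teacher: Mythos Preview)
Your proposal is correct and follows essentially the same route as the paper: fix a time slice where $u_t(t,\cdot)\in L^2$, apply Corollary \ref{mainreggeneral} with $f=-u_t(t)$ to get $u(t)\in\dot F^{1/2}_{4,2}$, and then estimate the right-hand side of the flow equation in $L^2(S^1)$ to conclude $(-\Delta)^{1/2}u(t)\in L^2$. The only cosmetic difference is that the paper works with the coarser two-term form \eqref{rephrasing01}, bounding $d_{1/2}u\cdot d_{1/2}(d\pi^{\perp}(u))$ in one stroke via the boundedness of $B_u$ and handling the divergence term by Lemma \ref{refr1}, whereas you pass to the finer splitting $\Omega_{u}\cdot d_{1/2}u+R_1+R_2+R_3$ and invoke Lemmas \ref{refr1} and \ref{refr2} separately; both reach the same $L^2$ control of the nonlinearity.
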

	
	The proof is more or less an immediate application of Corollary \ref{mainreggeneral}, once we observe that $u \in N$ for almost every $(t,x) \in \R_{+} \times S^1$ implies $u(t) \in L^{\infty}(S^1)$ for almost every time $t$.
	
	\begin{proof}
	First, by noticing that we may apply Corollary \ref{mainreggeneral} with $p = 4$, we notice that $u(t) \in H^1(S^1)$ for almost every $t \in \mathbb{R}$, since the RHS of the rephrasing of the fractional harmonic flow \eqref{rephrasing01} below is in $L^{2}$ and the Riesz potential preserves the $L^2$-norm. Indeed, we see for a fixed time $t$:
	\begin{equation}
	\label{rephrasing01}
		(-\Delta)^{1/2} u(t) = d_{1/2} u \cdot d_{1/2} \left( d\pi^{\perp}(u) \right) + \div_{1/2} \left( \frac{A^{i}_{u}(du, du)(x,y)}{| x-y |^{1/2}} d\pi^{\perp}(u(y))_{ij} \right) - \partial_{t} u(t)
	\end{equation}
	Use the remainder estimates in Lemma \ref{refr1} and the bounds on $B_{u}(x,y)$ to find that the first two summands lie in $L^{2}(S^1)$, see \eqref{estr1reg} and the estimate in Theorem \ref{schiwangthm1.4} for $S^1$ with $p=4, q=2, s=1/2$ for the first summand in \eqref{rephrasing01}. Hence, by standard elliptic estimates for the fractional Laplacian or simply observing that with $\mathcal{R}$ being the Riesz transform, we have:
	\begin{equation}
		\nabla u(t) = \mathcal{R} \left( d_{1/2} u \cdot d_{1/2} \left( d\pi^{\perp}(u) \right) + \div_{1/2} \left( \frac{A^{i}_{u}(du, du)(x,y)}{| x-y |^{1/2}} d\pi^{\perp}(u(y))_{ij} \right) - \partial_{t} u(t) \right), 
	\end{equation}
	and $\mathcal{R}$ being a continuous operator on $L^{2}(S^1)$, we are led to the following estimate:
	\begin{align}
	\label{estforh11}
		\| u(t) \|_{H^{1}(S^1)}^2 	&\leq C \left( \| u(t) \|_{L^2}^2 + \| | d_{1/2} u(t) |^{2} \|_{L^2}^2 + \| \partial_{t} u(t) \|_{L^2}^2 \right) \notag \\
							&\leq C \left( 1 + \| | d_{1/2} u(t) |^{2} \|_{L^2}^2 + \| \partial_{t} u(t) \|_{L^2}^2 \right),
	\end{align}
	where we used $u(t) \in N$ almost everywhere for almost every time $t$ to bound the $L^2$-norm of $u(t)$ by an $L^{\infty}$-bound depending only on $N$. To provide some more details, the first two summands on the RHS of the equation \eqref{rephrasing01} may be estimated by the $L^2$-norm of $| d_{1/2} u |^2(x)$, see Lemma \ref{refr1} in the second case with $u= v = w$ and the treatment of uniqueness for a rephrasing of $d_{1/2} u \cdot d_{1/2} \left( d\pi^{\perp}(u) \right)$ in terms of $B_{u}(x,y)$ which is bounded. We highlight that:
	$$\| | d_{1/2} u(t) |^{2} \|_{L^2}^2 \sim \| u(t) \|_{\dot{F}^{1/2}_{4,2}}^{2},$$
	by Theorem \ref{schiwangthm1.4}, which together with Corollary \ref{mainreggeneral} in the case $p=4$ allows us to conclude that $u(t) \in H^{1}(S^1)$.
	\end{proof}
	
	It is clear that thanks to the (local) $L^2$-integrability with respect to time, it thus remains to study the following contribution:
	$$\| | d_{1/2} u(t) |^{2} \|_{L^2}^2 = \| u \|_{\dot{W}^{1/2, (4,2)}}^4 \sim \| u \|_{\dot{F}^{1/2}_{4,2}}^4 \sim \| (-\Delta)^{1/4} u \|_{L^4}^4$$
	Using the same ideas as in the proof of the uniqueness statement for $N = S^{n-1}$ (see \cite{wettstein}), we may estimate this using some cut-off function applied to the periodic extension of $u$ or using \cite[Lemma 3.1]{wettstein} and Theorem \ref{schiwangthm1.4} by:
	$$\| | d_{1/2} u(t) |^{2} \|_{L^2}^2 \leq C' \| u(t) \|_{\dot{H}^{1/2}}^{2} \| u(t) \|_{\dot{H}^{1}}^{2}$$
	We recall that the homogeneous norm may be used by means of a perturbation-argument using constants as in the case $N = S^{n-1}$. Therefore, we have the energy term appearing as for $N = S^{n-1}$ and provided it is smaller than some $\varepsilon > 0$, we find:
	$$\| | d_{1/2} u(t) |^{2} \|_{L^2}^2 \leq C' \| (-\Delta)^{1/4}u(t) \|_{L^{2}}^{2} \| u(t) \|_{H^{1}}^{2} \leq C' \varepsilon \cdot \| u(t) \|_{H^{1}}^{2},$$
	where $\varepsilon > 0$ is an a priori energy estimate as in \cite{riv}. If $\varepsilon > 0$ is sufficiently small, we may absorb this term in the left hand side of \eqref{estforh11} to arrive at:
	\begin{equation*}
		(1 - CC' \varepsilon ) \cdot \| u(t) \|_{H^{1}(S^1)}^2 \leq \tilde{C} \left( 1 + \| \partial_{t} u(t) \|_{L^2}^2 \right) \Rightarrow \| u(t) \|_{H^{1}(S^1)} \leq \frac{\tilde{C}}{1 - C' C \varepsilon} \left( 1 + \| \partial_{t} u(t) \|_{L^2}^2 \right),
	\end{equation*}
	which thus yields an estimate for the $H^1$-norm, if, for example, $0 < \varepsilon \leq 1/(2C' C)$. We observe that hence, by the integrability properties of $\partial_{t} u$ and the constant function:
	\begin{equation}
	\label{improvedregularityinh1forgeneral}
		u \in L^{2}_{loc}(\R_{+}; H^{1}(S^1))
	\end{equation}
	This allows us to employ Theorem \ref{uniquegeneral}, as the local regularity of the derivative is now apparent. This proves precisely the result in Theorem \ref{uniqueinenergygeneral} which is also stated in the introduction of the paper and thus concludes our investigation of the fractional harmonic gradient flow with small initial energy.

	\subsection{Existence and Regularity of Solutions}
	
	It remains to establish that solutions of the fractional harmonic gradient flow exist and are smooth, at least locally, and even globally smooth, provided the initial energy is sufficiently small. The ideas behind the proof are mostly the same as in \cite{wettstein}, once we have rewritten the fractional harmonic gradient flow in a slightly different way. It should be notes that the existence result we prove is extending the one in \cite{schisirewang} which only deals with certain closed manifolds $N$ that possess nice symmetry properties.

	\subsubsection{An Equivalent Reformulation of the Main Equation}
	
	A key step in \cite{wettstein} in order to derive local existence lies in the application of the Inverse Function Theorem in Banach spaces to argue along the lines of \cite{hamilton}. As one can see in the author's previous work \cite{wettstein}, the property that the solution assumes values only in $S^{n-1}$ is merely proven after establishing existence and therefore crucially relies on the fact that $u \in N$ is ensured by the $1/2$-harmonic gradient flow, provided the initial datum takes values in $N$. It is thus reasonable to expect that we shall treat the target space after establishing local existence. Nevertheless, the choice of formulation of the fractional harmonic gradient flow we study will be of great importance when it comes to verifying $u \in N$ a.e.. As a result, we first would like to think about the right kind of equation to study.\\
	
	First, in \cite{wettstein} we used the following sequence of equivalent characterisations:
	$$u(t,x) \in S^{n-1} \Leftrightarrow | u(t,x) |^2 = 1 \Leftrightarrow | u(t,x) |^2 - 1 = 0$$
	Unfortunately, quite such a simple characterisation is not available for general $N$. However, if we let $\pi: \mathbb{R}^n \to \mathbb{R}^n$ be the extended version of the closest point projection, see Section 4.1, we may see for $u$ at least continuous and $u(0) = u_0$ smooth with values in $N$:
	$$u(t,x) \in N \Leftrightarrow \pi(u(t,x)) = u(t,x) \Leftrightarrow | u(t,x) - \pi(u(t,x)) |^2 = 0,$$
	for all $(t,x) \in [0,\infty[ \times S^1$. The key observation is that due to the continuity and the fact that $\pi(x) = x$ only on $N$ and possibly on a subset of the complement of a sufficiently small neighbourhood of $N$, the identity $\pi(u(t,x)) = u(t,x)$ for all $(t,x)$ actually necessitates $u(t,x) \in N$, as $u(0,x) \in N$ for all $x \in S^1$. The minimal regularity imposed by assuming $u$ is continuous actually prevents $u$ from ever leaving $N$, since the set of fixed points of $\pi$ is a disconnected union of $N$ and some other set. Thus, even though the extension $\pi$ is not canonical, the condition:
	$$| u(t,x) - \pi(u(t,x)) |^2 = 0, \quad \forall (t,x) \in [0,\infty[ \times S^1,$$
	is the analogue we are looking for to the function $| u |^2 - 1$ in the case $N = S^{n-1}$.\\
	
	Now, we would like to think about the fractional heat-type equation solved by $| u - \pi(u) |^2$. This will provide us crucial information about the "correct" choice of non-linearity to study in connection with the $1/2$-harmonic gradient flow. So we are interested in computing:
	$$\partial_{t} \left( | u - \pi(u) |^2 \right) + (-\Delta)^{1/2} \left( | u - \pi(u) |^2 \right)$$
	For now, we assume that $u$ is actually smooth to justify our calculations. Then:
	$$\partial_{t} \left( | u - \pi(u) |^2 \right) = 2 \left( u_{t} - d\pi(u) u_{t} \right) \cdot (u - \pi(u))$$
	Completely analogous to the computations for local existence and regularity in \cite{wettstein}, we have:
	$$(-\Delta)^{1/2} \left( | u - \pi(u) |^2 \right) = 2 (-\Delta)^{1/2} \left( u - \pi(u) \right) \cdot (u - \pi(u)) - | d_{1/2} \left( u - \pi(u) \right) |^2$$
	Therefore:
	\begin{align}
	\label{exreg:eq001}
		&\partial_{t} \left( | u - \pi(u) |^2 \right) + (-\Delta)^{1/2} \left( | u - \pi(u) |^2 \right)	\notag \\
		&= 2 \left( u_{t} - d\pi(u) u_{t} \right) \cdot (u - \pi(u)) + 2 (-\Delta)^{1/2} \left( u - \pi(u) \right) \cdot (u - \pi(u)) - | d_{1/2} \left( u - \pi(u) \right) |^2 \notag \\
		&= 2 \left( Id - d\pi(u) \right) \left( u_{t} + (-\Delta)^{1/2} u \right) \cdot (u - \pi(u)) \notag \\
		&+ 2 \left( d\pi(u) (-\Delta)^{1/2} u - (-\Delta)^{1/2} (\pi(u)) \right) \cdot (u - \pi(u)) - | d_{1/2} \left( u - \pi(u) \right) |^2 \notag \\
		&= 2 (Id - d\pi(u)) r(u) \cdot (u - \pi(u)) + 2 \left( d\pi(u) (-\Delta)^{1/2} u - (-\Delta)^{1/2} (\pi(u)) \right) \cdot (u - \pi(u)) \notag \\
		&- | d_{1/2} \left( u - \pi(u) \right) |^2,
	\end{align}
	where we write:
	\begin{equation}
	\label{exreg:eq0011}
		u_t + (-\Delta)^{1/2} u = r(u),
	\end{equation}
	with $r$ the non-linearity depending on $u$ we are trying to find. Thinking about \eqref{exreg:eq001}, one may come up with the following non-linearity:
	\begin{equation}
	\label{exreg:eq002}
		r(u) := C(d\pi(u), u),
	\end{equation}
	where we define:
	\begin{equation}
	\label{exreg:eq003}
		C(a,b) := \mathcal{R} (a \nabla b) - a (-\Delta)^{1/2} b
	\end{equation}
	Here, we denote by $\mathcal{R}$ the Riesz-Hilbert transform on $S^1$. Naturally, \eqref{exreg:eq003} extends to vector-valued or matrix-valued maps in the natural way. Additionally the operator is already studied in \cite{daliopigati} and one of the results there, which easily translates to $S^1$ by the same arguments, is the following:
	
	\begin{prop}[Lemma E.2, \cite{daliopigati}]
	\label{lemmae.2daliopigati}
		Assume that $a \in F^{s}_{p,2}(S^1), b \in F^{1}_{q,2}(S^1)$ with $s > 1/p, 1 < p,q < \infty$. Then, for any $\gamma > 1/p$, we have the following estimate:
		$$\| C(a,b) \|_{F^{s-\gamma}_{q,2}(S^1)} \lesssim \| a \|_{F^{s}_{p,2}(S^1)} \| b \|_{F^{1}_{q,2}(S^1)}$$
	\end{prop}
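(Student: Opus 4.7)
The key observation is that on $S^1$ one has $\mathcal{R}(\nabla b) = (-\Delta)^{1/2} b$ up to a constant, so
$$C(a,b) = \mathcal{R}(a \nabla b) - a\,\mathcal{R}(\nabla b) = [\mathcal{R}, a]\,\nabla b$$
is a genuine commutator of the Hilbert transform with multiplication by $a$. The plan is to reproduce the real-line proof of \cite[Lemma E.2]{daliopigati} on $S^1$ by combining the periodic Littlewood-Paley decomposition built into the norm \eqref{triebels1norm} with a Bony paraproduct argument.

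Concretely, I would decompose $a = \sum_j P_j a$ and $\nabla b = \sum_k P_k \nabla b$ via the smooth dyadic cut-offs $(\varphi_j)$ and split the product into the three standard paraproducts
$$a \nabla b = \Pi_{\ll}(a, \nabla b) + \Pi_{\gg}(a, \nabla b) + \Pi_{\sim}(a, \nabla b),$$
corresponding to $j \ll k$, $j \gg k$ and $j \sim k$ respectively. For the high-low piece $\Pi_{\gg}$ and the resonant piece $\Pi_{\sim}$, no cancellation is needed: the frequency support of each dyadic block is under control after applying $\mathcal{R}$, and one estimates directly via H\"older's inequality, the boundedness of $\mathcal{R}$ on $L^q(S^1)$, and the embedding $F^s_{p,2}(S^1) \hookrightarrow L^\infty(S^1)$, which holds because $s > 1/p$. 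The allowed loss of $\gamma > 1/p$ derivatives in the target exponent is precisely what absorbs the endpoint failure of this embedding and the mild loss coming from summing geometric series over frequencies in non-diagonal configurations.

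The heart of the argument, and where I expect the main technical work, is the low-high paraproduct $\Pi_{\ll}(a, \nabla b) = \sum_k (S_{k-2} a)\, P_k \nabla b$. Here the commutator $[\mathcal{R}, S_{k-2} a]\, P_k \nabla b$ must be analysed by expanding $S_{k-2} a$ in a Taylor polynomial around the point of evaluation in the kernel representation of $\mathcal{R}$; the leading term cancels, and one derivative is effectively transferred from $\nabla b$ onto $a$. This yields a pointwise bound of the schematic form $|\mathcal{M}(\nabla S_{k-2} a)| \cdot |P_k b|$ with $\mathcal{M}$ the Hardy-Littlewood maximal operator, after which the vector-valued Fefferman-Stein inequality on $S^1$ produces the $F^{s-\gamma}_{q,2}$ bound with the claimed structure $\|a\|_{F^s_{p,2}} \|b\|_{F^1_{q,2}}$.

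The transfer from $\mathbb{R}$ to $S^1$ is routine once this structural argument is in place: the periodic Littlewood-Paley theory of \cite{schmeitrieb}, the Fefferman-Stein maximal inequality, and the $L^q$-boundedness of $\mathcal{R}$ on $S^1$ are all available, and the commutator kernel estimates go through verbatim after replacing the real-line Hilbert kernel $1/(x-y)$ by its periodized counterpart $\tfrac{1}{2}\cot((x-y)/2)$, whose smoothness away from the diagonal is identical. The bookkeeping that produces the precise threshold $\gamma > 1/p$ is carried out in detail in \cite[Appendix E]{daliopigati} and can be imported essentially without change.
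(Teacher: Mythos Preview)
Your proposal is consistent with the paper's treatment, but you should be aware that the paper does not actually give a proof of this proposition: it simply states that ``the proof only relies on the characterisations of the Bessel-Sobolev spaces and the use of Littlewood-Paley decompositions, both of which continue to hold on $S^1$ by \cite{schmeitrieb} and our discussion in Section 2'' and then refers the reader to \cite{daliopigati}. Your outline --- rewriting $C(a,b)$ as the Hilbert-transform commutator $[\mathcal{R},a]\nabla b$, running a Bony paraproduct decomposition, exploiting $F^{s}_{p,2}\hookrightarrow L^{\infty}$ for $s>1/p$ on the high-low and resonant pieces, and using a kernel/Taylor argument to gain a derivative on the low-high piece --- is a reasonable and essentially correct reconstruction of the type of argument carried out in \cite[Appendix E]{daliopigati}, and your remarks on transferring from $\mathbb{R}$ to $S^1$ via the periodic Littlewood-Paley theory of \cite{schmeitrieb} match precisely the paper's one-line justification. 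In short, you have supplied considerably more detail than the paper itself does, and your sketch follows the same route the paper points to.
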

	
	The proof only relies on the characterisations of the Bessel-Sobolev spaces and the use of Littlewood-Paley decompositions, both of which continue to hold on $S^1$ by \cite{schmeitrieb} and our discussion in Section 2. We thus refer to \cite{daliopigati} for the proof.\\
	
	Observe that Proposition \ref{lemmae.2daliopigati} actually also hints at nice bootstrapping estimates available for $C(a,b)$ and therefore, solutions of \eqref{exreg:eq0011} can be expected to be smooth, provided the initial datum is smooth as well, compare this with the ideas in \cite{wettstein}. However, for the moment, we would like to explore how this choice of $r(u)$ affects the computation in \eqref{exreg:eq001}:
	\begin{align}
	\label{exreg:eq004}
		&\partial_{t} \left( | u - \pi(u) |^2 \right) + (-\Delta)^{1/2} \left( | u - \pi(u) |^2 \right)	\notag \\
		&= 2 (Id - d\pi(u)) r(u) \cdot (u - \pi(u)) + 2 \left( d\pi(u) (-\Delta)^{1/2} u - (-\Delta)^{1/2} (\pi(u)) \right) \cdot (u - \pi(u)) - | d_{1/2} \left( u - \pi(u) \right) |^2 \notag \\
		&= 2 (Id - d\pi(u)) \left( (-\Delta)^{1/2}(\pi(u)) - d\pi(u) (-\Delta)^{1/2} u \right) \cdot (u - \pi(u)) \notag \\
		&+ 2 \left( d\pi(u) (-\Delta)^{1/2} u - (-\Delta)^{1/2} \left( \pi(u) \right) \right) \cdot (u - \pi(u)) - | d_{1/2} \left( u - \pi(u) \right) |^2 \notag \\
		&= - 2 d\pi(u) \left( (-\Delta)^{1/2}(\pi(u)) - d\pi(u) (-\Delta)^{1/2} u \right) \cdot (u - \pi(u)) - | d_{1/2} (u - \pi(u)) |^2 \notag \\
		&= 2 \left( d\pi(\pi(u)) - d\pi(u) \right) \left( (-\Delta)^{1/2}(\pi(u)) - d\pi(u) (-\Delta)^{1/2} u \right) \cdot (u - \pi(u)) - | d_{1/2} (u - \pi(u)) |^2,
	\end{align}
	where in the last line, we used that $u - \pi(u)$ is orthogonal to $T_{\pi(u)} N$ and thus:
	$$d\pi(\pi(u))(u - \pi(u)) = 0.$$
	Applying the fact that $d\pi(\pi(u))$ is an orthogonal projection and therefore symmetric, we may deduce the equality above in \eqref{exreg:eq004}. Using now Lipschitz-continuity of $\pi$ and its derivatives as well as smoothness, we may therefore show:
	\begin{equation}
	\label{innbyniceestimate}
		\partial_{t} \left( | u - \pi(u) |^2 \right) + (-\Delta)^{1/2} \left( | u - \pi(u) |^2 \right) \leq C \cdot | u - \pi(u) |^2,
	\end{equation}
	where $C$ is a constant depending on $u$. We expand a bit on this step in the next subsection when determining the kernel of the linearisation of the operator induced by the fractional harmonic gradient flow. Since $u - \pi(u) = 0$ at time $t=0$, we may therefore invoke a maximum principle inspired by the one in \cite{hamilton} just like in \cite{wettstein} to deduce:
	$$| u - \pi(u) |^2 = 0, \quad \forall (t,x) \in [0, \infty[ \times S^1,$$
	and consequently:
	$$u(t,x) \in N, \quad \forall (t,x) \in [0, \infty[ \times S^1$$
	We notice that this precisely proves the required condition on the values of $u$. Additionally, the argument works equally well on $[0,T] \times S^1$, thus it applies also to local solutions of \eqref{exreg:eq0011} in time. This observation will be crucial, as it will allow us to forget about the condition on the values at first and focus on the analytic aspects of the PDE.
	
	\subsubsection{Local Regularity}
	
	Similar to \cite{wettstein}, we shall prove the following result dealing with local regularity of solutions:
	
	\begin{prop}
	\label{exreg:proplocalreg}
		Let $u_0 \in C^{\infty}(S^1;N)$. Then there exists a $T > 0$, possibly depending on $u_0$, and a smooth map $u \in C^{\infty}([0,T] \times S^1)$ solving the following non-local PDE:
		\begin{equation}
		\label{exreg:eq005}
			u_t + (-\Delta)^{1/2} u = (-\Delta)^{1/2} \pi(u) - d\pi(u) (-\Delta)^{1/2} u,
		\end{equation}
		and satisfying $u(0, \cdot) = u_0$. Additionally, by the result in the previous subsection:
		$$u(t,x) \in N, \quad \forall (t,x) \in [0,T] \times S^1,$$
		and, as a result, \eqref{exreg:eq005} becomes the half-harmonic gradient flow equation \eqref{gradflowuvgeneral}.
	\end{prop}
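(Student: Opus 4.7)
The plan follows the Inverse Function Theorem strategy of \cite{hamilton} and \cite{wettstein}, now applied to the reformulation \eqref{exreg:eq005} whose non-linearity is the commutator $C(d\pi(u),u)$ from \eqref{exreg:eq003}. I would work in parabolic Triebel--Lizorkin spaces adapted to the half-heat operator, e.g.\ the maximal-regularity space
$$X_T := \big\{\, u \in L^p([0,T];F^{s+1}_{p,2}(S^1))\ \big|\ \partial_t u \in L^p([0,T];F^{s}_{p,2}(S^1))\,\big\},$$
for suitable $s,p$ with $s>1/p$, so that the linear problem $v_t + (-\Delta)^{1/2}v = g,\ v(0)=v_0$ is an isomorphism from $X_T$ to $L^p([0,T];F^s_{p,2}(S^1)) \times F^{s+1-1/p}_{p,2}(S^1)$, as in \cite{hieber}. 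This is the same functional framework used in \cite{wettstein}; the only new input is the choice of non-linearity.

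Concretely, I would fix a smooth reference $\bar u$, obtained by solving $\bar u_t + (-\Delta)^{1/2}\bar u = 0$ with $\bar u(0)=u_0$, decompose $u = \bar u + v$ with $v(0)=0$, and define
$$\Phi(v) := v_t + (-\Delta)^{1/2} v - C\big(d\pi(\bar u+v),\,\bar u+v\big) + C(d\pi(\bar u),\bar u) - F_0,$$
where $F_0 := -C(d\pi(\bar u),\bar u)$ is the inhomogeneity generated by the reference. The derivative $d\Phi(0)$ is the linear parabolic operator $v\mapsto v_t + (-\Delta)^{1/2}v - L_{\bar u}(v)$, where $L_{\bar u}$ is the linearisation of $v\mapsto C(d\pi(\bar u+v),\bar u+v)$ at $v=0$. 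By Proposition \ref{lemmae.2daliopigati}, each summand of $L_{\bar u}$ maps $X_T$ into $L^p([0,T];F^{s-\gamma+1-1/p}_{p,2})$ for some small $\gamma>1/p$, hence into $L^p([0,T];F^s_{p,2})$ with an operator norm that is small for $T$ small (by absolute continuity of the time integral). Therefore $d\Phi(0)$ is a small perturbation of the isomorphism $\partial_t+(-\Delta)^{1/2}$ and is itself an isomorphism on a short time interval; the Inverse Function Theorem then produces $v\in X_T$ solving $\Phi(v)=0$, i.e. a local solution $u=\bar u+v$ of \eqref{exreg:eq005}.

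For smoothness of $u$, I bootstrap: once $u\in X_T$ is known with regularity parameter $s$, Proposition \ref{lemmae.2daliopigati} upgrades the RHS $C(d\pi(u),u)$ to $L^p([0,T];F^{s+\beta}_{p,2})$ for some $\beta>0$ controlled by the gap between the integrability indices admitted by the lemma, and maximal regularity for $\partial_t+(-\Delta)^{1/2}$ pushes $u$ into $L^p([0,T];F^{s+1+\beta}_{p,2})$. Iterating and differentiating the equation in time (the commutator structure of $C(a,b)$ plays well with $\partial_t$ thanks to the smoothness of $\pi$) gives $u\in C^\infty(]0,T]\times S^1)$; joint smoothness at $t=0$ follows from compatibility with the smooth initial datum $u_0$, exactly as in \cite{wettstein}. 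Once $u$ is smooth and thus continuous, the previous subsection applies: $|u_0-\pi(u_0)|^2=0$ together with \eqref{innbyniceestimate} and Hamilton's non-local maximum principle (cf.\ \cite{wettstein}) forces $|u-\pi(u)|^2\equiv 0$ on $[0,T]\times S^1$, hence $u(t,x)\in N$ and \eqref{exreg:eq005} coincides with the half-harmonic gradient flow \eqref{gradflowuvgeneral}.

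The main obstacle will be the invertibility of $d\Phi(0)$: one must arrange $s$, $p$ and $\gamma$ so that the loss of $\gamma>1/p$ derivatives in Proposition \ref{lemmae.2daliopigati} is compensated by the gain $1-1/p$ provided by maximal regularity for the half-heat operator, and so that the bootstrap produces a strict gain $\beta>0$ at every step. A secondary technical point is controlling the dependence of $L_{\bar u}$ on time; this is handled by showing $\bar u$ lies in the same scale of spaces (which is immediate since $u_0$ is smooth) and by picking $T$ small enough to make the relevant multiplier norms of the coefficients arbitrarily small, so that the perturbation of $\partial_t+(-\Delta)^{1/2}$ remains invertible uniformly.
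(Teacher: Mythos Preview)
Your overall strategy---write $u=\bar u+v$ with $\bar u$ the half-heat extension of $u_0$, apply the Inverse Function Theorem to $v\mapsto v_t+(-\Delta)^{1/2}v-C(d\pi(\bar u+v),\bar u+v)$, bootstrap via Proposition~\ref{lemmae.2daliopigati}, and conclude $u\in N$ from the previous subsection---matches the paper's. The substantive difference is how invertibility of the linearisation $d\Phi(0)=\partial_t+(-\Delta)^{1/2}-L_{\bar u}$ is obtained. You argue that $L_{\bar u}$ has small operator norm for small $T$; the paper instead works in the plain space $W^{1,p}_0([0,T]\times S^1)$, shows that $L_{\bar u}$ is \emph{compact} (via Proposition~\ref{lemmae.2daliopigati} and the rewriting \eqref{alternativeformofcommutator}), hence $d\Phi(0)$ is Fredholm of index $0$, and then proves injectivity by bootstrapping any kernel element $h$ to smoothness and deriving $\partial_t|h|^2+(-\Delta)^{1/2}|h|^2\le \hat C\,|h|^2$, forcing $h\equiv 0$ by the fractional maximum principle. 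This injectivity computation is a rehearsal of the argument \eqref{exreg:eq004}--\eqref{innbyniceestimate} that later yields $u\in N$, which is a conceptual bonus of the paper's route.

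Your smallness argument is viable, but the phrase ``absolute continuity of the time integral'' does not by itself justify it: a pointwise-in-time operator with bounded coefficients does not have small $L^p([0,T];Y)\to L^p([0,T];Y)$ norm as $T\to 0$. What makes it work is that $L_{\bar u}$ is genuinely lower order---it requires only $v(t)\in F^{s'}_{p,2}$ with $s'<s+1$---so one can route through the trace embedding $X_T\hookrightarrow C([0,T];F^{s+1-1/p}_{p,2})$ (constant uniform in $T$ since $v(0)=0$) and pick up a factor $T^{1/p}$ via $\|v\|_{L^p([0,T];F^{s'})}\le T^{1/p}\|v\|_{L^\infty([0,T];F^{s+1-1/p})}$. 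This needs $s'\le s+1-1/p$ together with (from Proposition~\ref{lemmae.2daliopigati}) $s'>s+1/p$, hence $p>2$; you should state this. Also, your definition of $\Phi$ is slightly garbled (the $+C(d\pi(\bar u),\bar u)-F_0$ terms cancel incorrectly); the map you actually want is $\Phi(v)=v_t+(-\Delta)^{1/2}v-C(d\pi(\bar u+v),\bar u+v)$, with $\Phi(0)=-C(d\pi(\bar u),\bar u)$ small in $L^p([0,T];F^s_{p,2})$ for small $T$, so that $0$ lies in the local image. With these fixes your route is more direct than the paper's Fredholm/maximum-principle detour, at the price of working in more structured function spaces than plain $W^{1,p}$.
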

	
	The last observation is due to the fact that if $u \in N$, then $d\pi(u)$ is the projection onto the tangent space $T_{u} N$ and $\pi(u) = u$, i.e.:
	$$(-\Delta)^{1/2} \pi(u) - d\pi(u) (-\Delta)^{1/2} u = (-\Delta)^{1/2} u - d\pi(u) (-\Delta)^{1/2} u = (Id - d\pi(u)) (-\Delta)^{1/2} u,$$
	which implies that the RHS of \eqref{exreg:eq005} is actually orthogonal to $T_{u} N$. This is pecisely the meaning of \eqref{gradflowuvgeneral}, see also the computation in Section 3.1, Section 4.1 and \cite[Section 3.1]{wettstein}.\\
	
	\noindent
	\textit{Proof of Proposition \ref{exreg:proplocalreg}.} The proof actually goes along the very same lines as the proof of Proposition 3.2 in \cite{wettstein}, using the local Inversion Theorem for Banach spaces and slightly better integrability combined with a bootstrap procedure which now uses Proposition \ref{lemmae.2daliopigati} instead of the Lemma proven in \cite{wettstein}.\\
	
	As seen in \cite{wettstein} by Fourier representation, we may find $\tilde{u}$ solving the fractional heat equation:
	$$\tilde{u}_{t} + (-\Delta)^{1/2} \tilde{u} = 0, \quad \tilde{u}(0,\cdot) = u_0$$
	By the same argument using an explicit formula for $\tilde{u}$, we know that $\tilde{u}$ is actually smooth, as $u_0$ is smooth. Let us now define the following map for $1 < p < \infty$:
	\begin{align}
	\label{exreg:eq006}
		&H: W^{1,p}_{0}([0,T] \times S^1) \to L^{p}( [0,T] \times S^1) \notag \\
		&H(v) := (\tilde{u} + v)_{t} + (-\Delta)^{1/2} \left( \tilde{u} + v \right) - C(d\pi(\tilde{u} + v), \tilde{u} + v)
	\end{align}
	Here, $u \in W^{1,p}_{0}([0,T] \times S^1) \subset W^{1,p}([0,T] \times S^1)$ denotes the subspace of functions in $W^{1,p}([0,T] \times S^1)$ with $u(0,\cdot) = 0$. We observe that if $H(\tilde{u} + v)$ is vanishing on some subinterval $[0,T_0] \subset [0,T]$, then $\tilde{u} + v$ is actually a local solution to the half-harmonic gradient flow \eqref{exreg:eq005}. Therefore, it suffices to establish the existence of $v$ with this property. To achieve this, as in \cite{wettstein}, we will show that $H$ maps a sufficiently small neighbourhood of the zero function to an open neighbourhood of $H(\tilde{u})$ and then choose $f \in L^{p}([0,T] \times S^1)$ such that $f$ equals $0$ on an interval $[0,T_0]$ and agreeing with $H(\tilde{u})$ on the remainder of $[0,T]$. Choosing $T_0$ sufficiently small, $f$ then lies in the image of $H$ and thus a function $v$ with the desired properties exists.\\
	
	To follow the program outlined above, we would like to invoke the inverse function theorem for Banach spaces. Let us observe that $H$ is Fr\`echet-differentiable and:
	$$dH(0) h = h_{t} + (-\Delta)^{1/2} h - C \left( d\left( d\pi \right)(\tilde{u}) h, \tilde{u}\right) - C \left( d\pi(\tilde{u}),  h \right)$$
	If we are able to show that $dH(0)$ is invertible, then the inverse function theorem would apply and we may argue as previously stated. Using Theorem 3.1 in \cite{hieber}, we deduce that:
	$$h \mapsto h_{t} + (-\Delta)^{1/2} h,$$
	on the function space above is actually invertible. Therefore, if we can establish that the remaining summand:
	\begin{equation}
	\label{exreg:eq007}
		h \mapsto C \left( d\left( d\pi \right)(\tilde{u}) h, \tilde{u} \right) + C \left( d\pi(\tilde{u}),  h \right),
	\end{equation}
	is compact, then $dH(0)$ would be Fredholm and thus:
	$$dH(0) \text{ is invertible } \Leftrightarrow dH(0) \text{ is injective }$$
	Indeed, one may observe that (by using $\nabla (ab) = \nabla a \cdot b + a \cdot \nabla b$ and the definition of $\mathcal{R}$):
	$$C(a,b) = (-\Delta)^{1/2} a \cdot b - \mathcal{R} \left( \nabla a \cdot b \right) - 2 d_{1/2} a \cdot d_{1/2} b,$$
	which shows that the second summand in \eqref{exreg:eq007} is a compact map. Namely, we have:
	\begin{align}
	\label{alternativeformofcommutator}
		C(a,b)	&= \mathcal{R} (a \nabla b) - a (-\Delta)^{1/2} b \notag \\
				&= \mathcal{R}( \nabla (ab)) - \mathcal{R}(\nabla a \cdot b) - a (-\Delta)^{1/2} b \notag \\
				&= (-\Delta)^{1/2} (ab) - (-\Delta)^{1/2} a \cdot b - a (-\Delta)^{1/2} b - \mathcal{R}(\nabla a \cdot b) + (-\Delta)^{1/2} a \cdot b \notag \\
				&= - 2 d_{1/2} a \cdot d_{1/2} b - \mathcal{R}(\nabla a \cdot b) + (-\Delta)^{1/2} a \cdot b,
	\end{align}
	where we used in the last equality the singular integral formulations of the fractional Laplacian. Reordering now provides the desired formula. For the first summand in \eqref{exreg:eq007}, we may just use Proposition \ref{lemmae.2daliopigati} and compactness of Sobolev embeddings. In both cases, we may easily deal with all terms involving $\tilde{u}$ by using the smoothness of this function. Thus, the operator $dH(0)$ is Fredholm. Since $dH(0)$ consists of an invertible operator and a compact one and adding a compact operator does not affect the Fredholm index, which means that $dH(0)$ has index $0$. As a result, invertibility of $dH(0)$ becomes equivalent to injectivity.\\
	
	Next, we want to investigate the kernel of $dH(0)$ to ultimately show that the linearised operator $dH(0)$ is injective. We argue by contradiction, i.e., assume that $h$ is such that:
	$$dH(0) h = 0.$$
	Using Proposition \ref{lemmae.2daliopigati}, we now may deduce that $h \in C^{\infty}([0,T] \times S^1)$. Namely, we observe that the summand $C  \left( d\pi(\tilde{u}),  h \right)$ is arbitrarily regular with respect to $x$ by using Proposition \ref{lemmae.2daliopigati}. Integrability in $L^p$ follows, as we may uniformly estimate all terms involving $\tilde{u}$ due to smoothness. For the second summand, we may argue analogous to \cite{wettstein}: By Proposition \ref{lemmae.2daliopigati}, we see:
	\begin{align}
		\| (-\Delta)^{s/2 - 1/4} C( d(d\pi)(\tilde{u})h, \tilde{u}) \|_{L^{p}(S^1)} 	&\lesssim \| d(d\pi(\tilde{u})) h \|_{F^{s}_{p,2}(S^1)} \| \nabla \tilde{u} \|_{L^{p}(S^1)} \notag \\
															&\lesssim \| h \|_{L^{p}(S^1)} + \| (-\Delta)^{s/2} h \|_{L^{p}(S^1)},
	\end{align}
	where the inequality involves constants depending on $\tilde{u}$, and therefore also:
	$$\| (-\Delta)^{s/2 - 1/4} C( d(d\pi)(\tilde{u})h, \tilde{u}) \|_{L^{p}([0,T] \times S^1)} \lesssim \| h \|_{L^{p}([0,T] \times S^1)} + \| (-\Delta)^{s/2} h \|_{L^{p}([0,T] \times S^1)}$$
	Let us now see the following for any $\varphi \in C^{\infty}([0,T] \times S^1)$ with compact support strictly contained in $[0,T] \times S^1$. Then we have:
	\begin{align}
		&\int_{0}^{T} \int_{S^1} (-\Delta)^{s/2} h \cdot (- \partial_{t} + (-\Delta)^{1/2} ) \varphi dx dt \notag \\
		&= \int_{0}^{T} \int_{S^1} h \cdot (- \partial_{t} + (-\Delta)^{1/2} ) (-\Delta)^{s/2} \varphi dx dt \notag \\
		&= \int_{0}^{T} \int_{S^1} \left( C(d(d\pi)(\tilde{u}) h, \tilde{u}) + C(d\pi(\tilde{u}), h) \right) (-\Delta)^{s/2} \varphi dx dt + \int_{S^1} h(0,x) (-\Delta)^{s/2} \varphi(0,x) \notag \\
		&= \int_{0}^{T} \int_{S^1} (-\Delta)^{s/2} \left( C(d(d\pi)(\tilde{u}) h, \tilde{u}) + C(d\pi(\tilde{u}), h) \right) \varphi dx dt
	\end{align}
	where we observed that $(-\Delta)^{s/2} \varphi$ is still compactly supported and smooth as well as the equation solved by $h$ and the initial condition $h(0,\cdot) = 0$. Therefore, $(-\Delta)^{s/2} h$ solves an inhomogeneous fractional heat equation with RHS in $L^{p}$. Arguing by using the connection of $\partial_{t} + (-\Delta)^{1/2}$ with the Laplacian and using an extension:
	$$\tilde{h}(t,x) := \begin{cases} h(t,x), \quad &\text{ if } t \geq 0 \\ - h(-t,x), \quad &\text{ if } t \leq 0 \end{cases}$$
	by symmetrising of $h$ to times $t \in [-T,0]$ (the resulting map $(-\Delta)^{s/2} \tilde{h}$ solves a Laplace equation with RHS in $W^{-1,p}$ in the distributional sense and we may thus argue by elliptic regularity), we find that $(-\Delta)^{s/2} h \in W^{1,p}_{loc}([0,T] \times S^1)$. Notice that as $h(0) = 0$, the extension $\tilde{h}$ is well-behaved. Arguing as in \cite{wettstein} using \cite{hieber}, where existence and uniqueness of solutions to the half-harmonic gradient flow in appropriate Sobolev spaces is implicitly treated, we may thus deduce:
	\begin{equation}
	\label{slightreggainforh}
		(-\Delta)^{s/2} h \in W^{1,p}([0,T] \times S^1), \quad \forall s \in [0,3/4],
	\end{equation}
	by using the estimate in Proposition \ref{lemmae.2daliopigati} as specified before. The next step is to actually find an equation solved by $\partial_{x} h =: h'$. This is achieved by using a slightly modified version of the equation $dH(0) h = 0$ using \eqref{alternativeformofcommutator}. Namely, we use:
	\begin{equation}
	\label{equationforhtobedifferentiated}
		h_{t} + (-\Delta)^{1/2} h = C(d(d\pi(\tilde{u})h, \tilde{u}) - 2 d_{1/2} \left( d\pi(\tilde{u}) \right) \cdot d_{1/2} h - \mathcal{R} \left( \nabla d\pi(\tilde{u}) \cdot h \right) + (-\Delta)^{1/2} \left( d\pi(\tilde{u}) \right) \cdot h
	\end{equation}
	If we differentiate both sides with respect to $x$, this leads to:
	\begin{align}
	\label{equationforhafterdifferentiating}
		\left( \partial_{t} + (-\Delta)^{1/2} \right) h' 	&=  C(\left(d(d\pi(\tilde{u}) \right)' h + d(d\pi(\tilde{u})) h', \tilde{u}) \notag \\
										&- 2 d_{1/2} \left( d\pi(\tilde{u}) \right)' \cdot d_{1/2} h - 2 d_{1/2} \left( d\pi(\tilde{u}) \right) \cdot d_{1/2} h' \notag \\
										&- \mathcal{R} \left( \nabla \left( d\pi(\tilde{u}) \right)' \cdot h \right) - \mathcal{R} \left( \nabla d\pi(\tilde{u}) \cdot h' \right) \notag \\
										&+ (-\Delta)^{1/2} \left( d\pi(\tilde{u}) \right)' \cdot h + (-\Delta)^{1/2} \left( d\pi(\tilde{u}) \right) \cdot h'
	\end{align}
	A direct computation using \eqref{slightreggainforh}, we thus know that the RHS of the equation lies in $L^{p}([0,T] \times S^1)$. Arguing as before using distributional solutions for the symmetrisation and the connection to the Laplacian as well as \cite{hieber}, we deduce:
	$$h' \in W^{1,p}([0,T] \times S^1])$$
	Inserting this into the main equation $dH(0)h = 0$, we thus find by differentiating with respect to $t$:
	$$h \in W^{2,p}([0,T] \times S^{1})$$
	By iterating similar to \cite{wettstein} and our computations starting from \eqref{equationforhafterdifferentiating} as above and repeating the same steps for higher and higher derivatives, using the connection to the Laplacian and Theorem 3.1 in \cite{hieber} repeatedly, this shows:
	$$\forall s \in \mathbb{R}_{\geq 0}: (-\Delta)^{s} h \in W^{1,p}([0,T] \times S^1)$$
	Using the equation $dH(0) h = 0$, we may also discover estimates for higher order derivatives in $t$-direction. Hence:
	$$h \in \bigcap_{k \in \mathbb{N}} W^{k,p}([0,T] \times S^1) \subset C^{\infty}([0,T] \times S^1)$$
	Therefore, any $h$ in the kernel of $dH(0)$ is actually smooth.\\
	
	It remains to establish that actually $h = 0$. The trick is as in \cite{wettstein} and the argument presented actually provides the argument for the missing step to prove \eqref{innbyniceestimate} in Section 4.3.1: We study the fractional heat-type equation satisfied by $| h |^2$. One finds:
	\begin{equation}
	\label{exreg:eq008}
		\partial_{t} \left( | h |^2 \right) = 2 h_{t} \cdot h,
	\end{equation}
	as well as:
	\begin{equation}
	\label{exreg:eq009}
		(-\Delta)^{1/2} \left( | h |^2 \right) = 2 (-\Delta)^{1/2} h \cdot h - | d_{1/2} h |^2.
	\end{equation}
	Combining \eqref{exreg:eq008} and \eqref{exreg:eq009}, we find:
	\begin{align}
	\label{exreg:eq010}
		&\partial_{t} \left( | h |^2 \right) + (-\Delta)^{1/2} \left( | h |^2 \right) \notag \\
		&= 2 h_{t} \cdot h + 2 (-\Delta)^{1/2} h \cdot h - | d_{1/2} h |^2 \notag \\
		&= 2 \left( h_{t} + (-\Delta)^{1/2} h \right) \cdot h - | d_{1/2} h |^2 \notag \\
		&= 2 \left( C \left( d\left( d\pi \right)(\tilde{u}) h, \tilde{u}\right) + C \left( d\pi(\tilde{u}),  h \right) \right) \cdot h - | d_{1/2} h |^2
	\end{align}
	Notice the similarity with \eqref{innbyniceestimate}. Our goal is now to estimate the terms involving $C$ in an appropriate manner. For example, we have:
	\begin{align}
		| C(d(d\pi)(\tilde{u}) h, \tilde{u}) |	&= \left| \mathcal{R}\left( d(d\pi)(\tilde{u})h \cdot \nabla \tilde{u} \right) - d(d\pi)(\tilde{u}) h \cdot (-\Delta)^{1/2} \tilde{u} \right| \notag \\
									&\leq \left| \mathcal{R}\left( d(d\pi)(\tilde{u})h \cdot \nabla \tilde{u} \right) \right| + \left| h \right| \| (-\Delta)^{1/2} \tilde{u} \|_{L^{\infty}([0,T] \times S^1)},
	\end{align}
	which shows that we merely have to estimate the contribution of the Riesz operator. We know that up to a constant, we have for any $x \in S^1$:
	\begin{align}
	\label{estimateforhfractionalheatnr1}
		&\mathcal{R}\left( d(d\pi)(\tilde{u})h \cdot \nabla \tilde{u} \right)(x) \notag \\
		&\sim - P.V. \int_{S^1} \left( d(d\pi)(\tilde{u}(x)) h(x) \nabla \tilde{u}(x) - d(d\pi)(\tilde{u}(y)) h(y) \nabla \tilde{u}(y) \right) \cot\left( \frac{x-y}{2} \right) dy \notag \\
		&\sim - P.V. \int_{S^1} \frac{d(d\pi)(\tilde{u}(x)) h(x) \nabla \tilde{u}(x) - d(d\pi)(\tilde{u}(y)) h(y) \nabla \tilde{u}(y)}{| x-y |} \cos \left( \frac{x-y}{2} \right) dy \notag \\
		&\lesssim | h(x) | \| u \|_{C^2}+ | d_{1/2} h |(x) \| d(d\pi)(\tilde{u}) \|_{L^{\infty}} \| \nabla u \|_{L^{\infty}},
	\end{align}
	where we used the formula for the distance on the circle $| x-y | = 2 \sin( (x-y)/2)$. The formula follows by using the fractional Leibniz rule adapted appropriately here. By analogous computations using \eqref{alternativeformofcommutator}, we find:
	\begin{equation}
	\label{estimateforhfractionalheatnr2}
		| C(d\pi(\tilde{u}), h)(x) | \lesssim | h(x) | + | d_{1/2} h |(x)
	\end{equation}
	The constants in \eqref{estimateforhfractionalheatnr1} and \eqref{estimateforhfractionalheatnr2} may depend on $\tilde{u}$ and on the target manifold $N$ (via the projection $\pi$ and its derivatives), but they are independent of $h$. To summarise, we have found the following:
	$$\partial_{t} \left( | h |^2 \right) + (-\Delta)^{1/2} \left( | h |^2 \right) \leq \tilde{C}_{\tilde{u}, N} | h | \left( | h(x) | + | d_{1/2} h |(x) \right) - | d_{1/2} h |^2,$$
	where $\tilde{C}_{\tilde{u}, N} > 0$ is a constant depending on $\tilde{u}$ and $N$. By using the arithmetic geometric mean inequality, we may deduce:
	$$\tilde{C}_{\tilde{u}, N} | h | \left( | h(x) | + | d_{1/2} h |(x) \right) - | d_{1/2} h |^2 \leq \tilde{C}_{\tilde{u}, N} | h |^2 + \frac{\tilde{C}_{\tilde{u}, N}}{4 \delta} | h |^2 + \delta | d_{1/2} h |^2 - | d_{1/2} h |^2,$$
	and choosing $\delta = 1$, we find:
	$$\partial_{t} \left( | h |^2 \right) + (-\Delta)^{1/2} \left( | h |^2 \right) \leq \hat{C}_{\tilde{u}, N} | h |^2.$$
	Invoking the maximum principle for the fractional heat flow yields just as in \cite{wettstein} that $h$ must assume its global extremum on $[0,T] \times S^1$ at time $t=0$. However, as $h(0) = 0$ and $| h |^2 \geq 0$, this implies:
	$$| h |^2 = 0 \Rightarrow h = 0,$$
	which finally implies injectivity of $dH(0)$. Therefore, $dH(0)$ is an injective Fredholm operator of index $0$, which shows that it is surjective, thus invertible. Local existence of $W^{1,p}$-solutions to the equation \eqref{exreg:eq005} exist.\\
	
	It remains to verify smoothness of such a solution $u$. This follows by a bootstrap argument similar to the one for $h$, but taking a bit more care. The key observation is that in each step of the bootstrap of $h$, H\"older regularity with sufficiently close $\alpha$ to $1$ is sufficient to obtain the desired estimates, i.e. $\alpha > 1/2$ and thus $p > 4$ suffice. Let us for now take $p>8$ to make the arguments easier, as we shall see below any $p$ is possible anyways. Namely, we have at every fixed time:
	\begin{align}
		\| (-\Delta)^{s/2 - 1/16} C(d\pi(u), u) \|_{L^{p}(S^1)}	&\lesssim \| d\pi(u) \|_{F^{s}_{p,2}(S^1)} \| \nabla u \|_{L^{p}(S^1)} \notag \\
												&\lesssim \| u \|_{C^{0,\alpha}(S^1)} \| u \|_{W^{1,p}(S^1)},
	\end{align}
	if $1 - 1/8 = 7/8 = \alpha > s$ and by integrating in time-direction:
	\begin{equation}
		\| (-\Delta)^{s/2 - 1/16} C(d\pi(u), u) \|_{L^{p}([0,T] \times S^1)} \lesssim \| u \|_{C^{0,\alpha}([0,T] \times S^1)} \| u \|_{W^{1,p}([0,T] \times S^1)}
	\end{equation}
	Arguing as for $h$ before by symmetrisation and \cite{hieber}, this shows:
	$$(-\Delta)^{t/2} u \in W^{1,p}([0,T] \times S^1),$$
	for all $0 \leq t < 3/4$. Next, we verify that $u \in W^{2,p}$ for all $1 < p < \infty$. This follows from the formulation \eqref{exreg:eq005} by rewriting:
	$$(-\Delta)^{1/2} \left( \pi(u) \right)(x) - d\pi(u(x)) (-\Delta)^{1/2} u(x) = P.V. \int_{S^{1}} \frac{\pi(u(x)) - \pi(u(y)) - d\pi(u(x)) (u(x) - u(y))}{| x-y |^2} dy$$
	By using Taylorapproximation, we see for every $j \in \{ 1, \ldots n \}$:
	\begin{align}
		&\pi_j (u(x)) - \pi_j (u(y)) - d\pi_j (u(x)) (u(x) - u(y)) \notag \\
		&= \int_{0}^{1} d\pi_j ((1-t)u(y) + u(x)) (u(x) - u(y)) - d\pi_j (u(x))(u(x) - u(y)) dt \notag \\
		&= \sum_{k=1}^{n} \int_{0}^{1} \left( \partial_k \pi_{j} ((1-t)u(y) + tu(x)) - \partial_{k} \pi_{j}(u(x)) \right) (u_{k}(x) - u_{k}(y)) dy \notag \\
		&= \sum_{k=1}^{n} \sum_{l=1}^{n} \int_{0}^{1} \int_{0}^{1} (t-1) \partial_{kl} \pi_{j} ((s-st) u(y) + (1 + st - s) u(x)) (u_{k}(x) - u_{k}(y)) (u_{l}(x) - u_{l}(y)) ds dt \notag \\
		&=: P_{j}^{kl}(u(x),u(y)) (u_{k}(x) - u_{k}(y)) (u_{l}(x) - u_{l}(y))
	\end{align}
	This is precisely the form we alluded to in Section 4.1.3. Therefore:
	\begin{align}
		&(-\Delta)^{1/2} \left( \pi(u) \right)(x) - d\pi(u(x)) (-\Delta)^{1/2} u(x) \notag \\
		&= \sum_{k,l = 1}^{n} P.V. \int_{S^1} P^{kl}(u(x),u(y)) d_{1/2} u_k (x,y) d_{1/2} u_l (x,y) \frac{dy}{| x-y |}
	\end{align}
	Notice that $P^{jk}$ are bounded and thus the RHS of the flow is actually bounded, since we know $u \in C^{0,\alpha}$ for $\alpha > 1/2$ and:
	\begin{align}
		\left| (-\Delta)^{1/2} \left( \pi(u) \right)(x) - d\pi(u(x)) (-\Delta)^{1/2} u(x) \right|	&= \left| \sum_{k,l = 1}^{n} P.V. \int_{S^1} P^{kl}(u(x),u(y)) d_{1/2} u_k (x,y) d_{1/2} u_l (x,y) \frac{dy}{| x-y |} \right| \notag \\
																	&\lesssim \int_{S^1} \frac{| u(x) - u(y) |^2}{| x-y |^2} dy = | d_{1/2} u |(x)^2 \notag \\
																	&\lesssim \| u \|_{C^{0,\alpha}},
	\end{align}
	where $\alpha > 1/2$. This implies that $u \in W^{1,p}([0,T] \times S^1)$ for all $1 < p < + \infty$, since the RHS of the fractional harmonic gradient flow for $u$ is thus bounded and therefore in all $L^p$-spaces, see \cite{hieber}.\\
	
	Our goal is now to establish higher integrability: We may now differentiate this expression with respect to $x$ to find:
	\begin{align}
		&\frac{d}{dx} \left( (-\Delta)^{1/2} \left( \pi(u) \right)(t,x) - d\pi(u(x)) (-\Delta)^{1/2} u(t,x) \right) \notag \\
		&= \sum_{k,l = 1}^{n} P.V. \int_{S^1} P^{kl}(u(x),u(y)) d_{1/2} u_{k}'(x,y) d_{1/2} u_{l}(x,y) \frac{dy}{| x-y |} \notag \\
		&+ \sum_{k,l = 1}^{n} P.V. \int_{S^1} P^{kl}(u(x),u(y)) d_{1/2} u_{k}(x,y) d_{1/2} u_{l}'(x,y) \frac{dy}{| x-y |} \notag \\
		&+ \sum_{k,l = 1}^{n} P.V. \int_{S^1} \left( dP^{kl}(u(x),u(y)) \begin{pmatrix} u'(x) \\  u'(y) \end{pmatrix} \right) d_{1/2} u_k (x,y) d_{1/2} u_l (x,y) \frac{dy}{| x-y |}
	\end{align}
	It can now he seen, using again H\"older continuity and the previously proven regularity as well as Sobolev embeddings:
	$$\frac{d}{dx} \left( (-\Delta)^{1/2} \left( \pi(u) \right)(t,x) - d\pi(u(x)) (-\Delta)^{1/2} u(t,x) \right) \in L^{p}([0,T] \times S^1).$$
	This now shows:
	\begin{equation}
		u \in W^{2,p}([0,T] \times S^1),
	\end{equation}
	by inserting the regularity $u' \in W^{1,p}$ into the main equation to establish higher regularity in time-direction. Higher order regularity can now be proven by iteration. The result of Proposition \ref{exreg:proplocalreg} therefore follows. \qed \\

	Let us observe that Proposition \ref{exreg:proplocalreg} actually proves existence of solutions to the fractional harmonic gradient flow for sufficiently small times for all closed target manifolds $N$, provided the initial datum is smooth. The next section shall remove the regularity assumption on the boundary data by following \cite{struwe1} as in \cite{wettstein}.
	
	\subsubsection{Global Regularity by Approximation, Existence as a Byproduct}
	
	To prove existence and regularity of solutions in the case of general initial data, we first have to be able to approximate the boundary data sufficiently well by smooth functions, The following result follows precisely as in \cite{wettstein} and the proof is therefore omitted:
	
	\begin{lem}
	\label{approximationlemmauhlenbeckgeneral}
		Assume $N$ is an arbitrary closed manifold. Let $u \in H^{1/2}(S^1;N)$. Then there exists a sequence $u_k \in C^{\infty}(S^1) \cap H^{1/2}(S^1;N)$ such that:
		$$\| u_k - u \|_{H^{1/2}(S^1)} \to 0, \quad k \to \infty.$$
	\end{lem}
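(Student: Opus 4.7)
\textit{Proof proposal.} The plan is to follow the classical Schoen--Uhlenbeck / Brezis--Nirenberg approximation strategy, adapted to the non-local Sobolev space $H^{1/2}(S^1)$. Since $u \in H^{1/2}(S^1;N)$ with $N$ compact, $u$ is bounded and the values sit almost everywhere in the compact set $N$; the obstruction to naive mollification is that the mollified function need not take values on $N$, so a post-composition with the closest-point projection $\pi$ is needed. The whole argument thus splits into three pieces: a mollification step, a step showing that the mollifications end up inside the tubular neighbourhood where $\pi$ maps back to $N$, and a convergence step for the composed maps.

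First, let $\rho_\varepsilon$ be a standard smooth mollifier on $S^1$ and set $v_\varepsilon := \rho_\varepsilon \ast u \in C^\infty(S^1;\R^n)$. By standard properties of convolution on $S^1$ in terms of Fourier series (equivalently, Littlewood--Paley), one has $v_\varepsilon \to u$ in $H^{1/2}(S^1)$ and in $L^2(S^1)$, together with $\|v_\varepsilon\|_{L^\infty} \leq \|u\|_{L^\infty} \lesssim \operatorname{diam}(N)$. Next, I would invoke the well-known embedding $H^{1/2}(S^1) \hookrightarrow \mathrm{VMO}(S^1)$ together with the Brezis--Nirenberg-type observation that for any $u \in \mathrm{VMO}(S^1; N)$ with $N$ compact, the mollified averages satisfy
\begin{equation*}
\sup_{x \in S^1} \operatorname{dist}(v_\varepsilon(x), N) \longrightarrow 0, \qquad \varepsilon \to 0,
\end{equation*}
simply because the $L^1$-oscillation of $u$ on each small arc vanishes uniformly, so the averages $v_\varepsilon(x)$ are uniformly close to some value of $u$ on that arc, hence to $N$. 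Consequently, for all sufficiently small $\varepsilon > 0$, $v_\varepsilon(x) \in B_{\delta/2}(N)$ for every $x \in S^1$, where $\delta$ is the width of the tubular neighbourhood on which $\pi$ from Section 4.1 is the smooth closest-point projection. Define then $u_\varepsilon := \pi \circ v_\varepsilon \in C^\infty(S^1; N)$.

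It remains to prove $u_\varepsilon \to u$ in $H^{1/2}(S^1)$. Since $u(x) \in N$ for a.e.\ $x$, we have $\pi(u) = u$ a.e., so $u_\varepsilon - u = \pi(v_\varepsilon) - \pi(u)$. The $L^2$-convergence follows from the global Lipschitz property of the extended $\pi$:
\begin{equation*}
\|u_\varepsilon - u\|_{L^2(S^1)} \leq \operatorname{Lip}(\pi) \, \|v_\varepsilon - u\|_{L^2(S^1)} \longrightarrow 0.
\end{equation*}
For the Gagliardo seminorm, the Lipschitz bound yields the pointwise inequality
\begin{equation*}
|d_{1/2}(\pi \circ v_\varepsilon - \pi \circ u)(x,y)|^2 \lesssim |d_{1/2} v_\varepsilon(x,y)|^2 + |d_{1/2} u(x,y)|^2,
\end{equation*}
where the right-hand side is integrable with respect to $dx\,dy/|x-y|$ uniformly in $\varepsilon$ (because $v_\varepsilon \to u$ in $H^{1/2}$, the bracketed Gagliardo seminorms are uniformly bounded and in fact convergent, giving equi-integrability by Vitali). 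Combining this equi-integrability with the a.e.\ pointwise convergence $d_{1/2}(\pi \circ v_\varepsilon)(x,y) \to d_{1/2}(\pi \circ u)(x,y) = d_{1/2} u(x,y)$, which itself follows from $v_\varepsilon \to u$ a.e.\ (up to a subsequence) and continuity of $\pi$, a dominated convergence / Vitali argument yields $[\pi \circ v_\varepsilon - u]_{\dot H^{1/2}} \to 0$.

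The main technical obstacle is the last step, since composition with a Lipschitz (but not $C^2$) map is not automatically continuous on $H^{1/2}$. The resolution relies on the fact that $\pi$ is actually smooth (in particular $C^1$) with bounded derivatives on the compact range of $v_\varepsilon$ for small $\varepsilon$, which supplies both the Lipschitz-type domination and the pointwise continuity needed to run Vitali's convergence theorem. Once this is in hand, extracting a subsequence if necessary and relabelling $u_k := u_{\varepsilon_k}$ for a sequence $\varepsilon_k \to 0$ concludes the proof.
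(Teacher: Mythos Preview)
Your argument is correct and complete: mollify, use the embedding $H^{1/2}(S^1)\hookrightarrow\mathrm{VMO}(S^1)$ to force the mollifications into the tubular neighbourhood, project back via $\pi$, and conclude convergence in $H^{1/2}$ by a Vitali argument with the Lipschitz domination $|d_{1/2}(\pi\circ v_\varepsilon-\pi\circ u)|^2\lesssim |d_{1/2}v_\varepsilon|^2+|d_{1/2}u|^2$ (the right-hand side being equi-integrable since $d_{1/2}v_\varepsilon\to d_{1/2}u$ in $L^2_{od}$). The only cosmetic point is that the a.e.\ convergence is obtained along a subsequence, so one should invoke the usual sub-subsequence trick to upgrade to the full family; this is routine.

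As for comparison with the paper: the paper does not supply a proof here at all---it simply states that the result ``follows precisely as in \cite{wettstein} and the proof is therefore omitted.'' Your construction is the classical Schoen--Uhlenbeck\,/\,Brezis--Nirenberg scheme for approximating manifold-valued critical Sobolev maps, which is exactly the method one expects in the cited reference, so there is no meaningful difference in approach to report.
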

	
	The next lemma proven in \cite{wettstein} continues to hold, as its proof relies on general properties of the Triebel-Lizorkin spaces on the unit circle, while the target manifold is irrelevant:
	
	\begin{lem}
	\label{struwelemma3.1general}
		There exist $C >  0$ not depending on $R, u, T$, such that for any smooth $u$ on $[0,T] \times S^1$ and $0 < R < 1$, the following estimate holds for all $x_0 \in S^1$:
		\begin{align}
		\label{struweest01}
			\int_{0}^{T} \int_{B_{\frac{3R}{4}}(x_0)} | (-\Delta)^{1/4} u |^4 dx dt 	&\leq C \sup_{0\leq t \leq T} \int_{B_{R}(x_0)} | (-\Delta)^{1/4} u(t) |^2 dx \notag \\
																&\cdot \left( \int_{0}^{T} \int_{B_{R}(x_0)} | (-\Delta)^{1/2} u |^2 dx dt + \frac{1}{R^2} \int_{0}^{T} \int_{S^1} | (-\Delta)^{1/4}u |^2 dx dt \right),
		\end{align}
		by density the same result applies for all $u \in H^{1}([0,T] \times S^1)$ with bounded $1/2$-Dirichlet energy. Similarily, we have:
		\begin{align}
		\label{struweest02}
			\int_{0}^{T} \int_{S^1} | (-\Delta)^{1/4} u |^4 dx dt 	&\lesssim \sup_{0\leq t \leq T, x \in S^1} \int_{B_{R}(x)} | (-\Delta)^{1/4} u(t) |^2 dx \notag \\
															&\cdot \left( \int_{0}^{T} \int_{S^1} | (-\Delta)^{1/2} u |^2 dx dt + \frac{1}{R^3} \int_{0}^{T} \int_{S^1} | (-\Delta)^{1/4}u |^2 dx dt \right).
		\end{align}
	\end{lem}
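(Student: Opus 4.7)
The plan is to reduce both estimates to the one-dimensional Gagliardo-Nirenberg inequality
\[
\|f\|_{L^4(S^1)}^4 \lesssim \|f\|_{L^2(S^1)}^2 \|f\|_{\dot{H}^{1/2}(S^1)}^2 + \|f\|_{L^2(S^1)}^4,
\]
which is the endpoint Sobolev embedding $\dot H^{1/4}(S^1) \hookrightarrow L^4(S^1)$ combined with the interpolation $\|f\|_{\dot H^{1/4}} \lesssim \|f\|_{L^2}^{1/2}\|(-\Delta)^{1/4}f\|_{L^2}^{1/2}$; both are standard consequences of the Triebel-Lizorkin theory recalled in Section~2 and transfer to $S^1$ by \cite{schmeitrieb}. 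I would apply this inequality at each fixed time $t$ to $f=\eta\cdot(-\Delta)^{1/4}u(t,\cdot)$, where $\eta\in C^\infty(S^1)$ is a cutoff with $\eta\equiv 1$ on $B_{3R/4}(x_0)$, $\operatorname{supp}\eta\subset B_R(x_0)$, $0\leq\eta\leq 1$ and $\|\eta'\|_\infty\lesssim 1/R$.

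Writing $g(t):=(-\Delta)^{1/4}u(t)$ and using $\int_{B_{3R/4}(x_0)}|g|^4\leq \int(\eta g)^4$, Gagliardo-Nirenberg yields pointwise in $t$
\[
\int_{B_{3R/4}(x_0)}|g(t)|^4\,dx\lesssim \|\eta g(t)\|_{L^2(S^1)}^2\bigl(\|\eta g(t)\|_{\dot H^{1/2}(S^1)}^2+\|\eta g(t)\|_{L^2(S^1)}^2\bigr).
\]
The $L^2$-factor is controlled by $\int_{B_R(x_0)}|(-\Delta)^{1/4}u(t)|^2$, and the supremum in $t$ produces the first factor of \eqref{struweest01}. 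For the seminorm I would use the Gagliardo representation together with the Leibniz-type identity $(\eta g)(x)-(\eta g)(y)=\eta(x)(g(x)-g(y))+g(y)(\eta(x)-\eta(y))$ to split
\[
\|\eta g\|_{\dot H^{1/2}}^2\lesssim \int_{B_R(x_0)}|d_{1/2}g|^2(x)\,dx+\int_{S^1}g(y)^2\int_{S^1}\frac{|\eta(x)-\eta(y)|^2}{|x-y|^2}\,dx\,dy.
\]
The first summand, after splitting the $y$-integral in $|d_{1/2}g|^2(x)$ according to whether $y\in B_R$ or not and invoking $(-\Delta)^{1/2}=\div_{1/2}d_{1/2}$, is bounded by $\int_{B_R(x_0)}|(-\Delta)^{1/2}u|^2$ plus an $R^{-2}\|g\|_{L^2(S^1)}^2$ tail. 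For the second, splitting the inner $x$-integral at $|x-y|=R$ and using $|\eta(x)-\eta(y)|\leq\min(\|\eta'\|_\infty|x-y|,\,2)$ bounds the inner integral by $O(R^{-1})$, hence by $O(R^{-2})$ for $R\leq 1$, giving the claimed $R^{-2}\int_{S^1}|(-\Delta)^{1/4}u|^2$ remainder.

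Integrating in $t$ over $[0,T]$ then yields \eqref{struweest01}. For \eqref{struweest02} I would cover $S^1$ by $\mathcal O(1/R)$ balls of radius $R$, apply \eqref{struweest01} on each, and sum: the principal $\int_{B_R}|(-\Delta)^{1/2}u|^2$ terms aggregate with bounded overlap to the global integral, whereas the $R^{-2}$ error terms, already global, pick up an additional factor $1/R$ from the cardinality of the cover and combine into $R^{-3}\int_0^T\int_{S^1}|(-\Delta)^{1/4}u|^2$. Extending from smooth $u$ to the density class is routine. The main technical obstacle is the non-local $\dot H^{1/2}$-localization: a direct commutator bound $\|[(-\Delta)^{1/4},\eta]\|_{L^2\to L^2}$ via Schur's test only yields $O(R^{-1/2})$ and is therefore too lossy to close the estimate. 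Working instead with the Gagliardo double integral and exploiting the symmetric Leibniz split above is what extracts the correct $\|\eta'\|_\infty^2$-type scaling; apart from this point, everything reduces to standard Triebel-Lizorkin technology on $S^1$, in line with the author's remark that only general properties of these function spaces enter the proof.
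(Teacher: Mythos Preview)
Your overall strategy --- Gagliardo--Nirenberg applied to $\eta\,(-\Delta)^{1/4}u$ at fixed time, then integrate in $t$, then cover $S^1$ for the global estimate --- is the right one and matches what the paper (via \cite{wettstein}) has in mind. The covering argument for \eqref{struweest02} is fine. But there is a genuine gap in how you localise the $\dot H^{1/2}$-seminorm.

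After the Leibniz split you arrive at the main term $\int_{B_R(x_0)}\eta(x)^2\,|d_{1/2}g|^2(x)\,dx$ with $g=(-\Delta)^{1/4}u$, and you then assert that this is controlled by $\int_{B_R(x_0)}|(-\Delta)^{1/2}u|^2\,dx$ plus a tail. This step does not follow: $|d_{1/2}g|^2(x)$ is the Gagliardo density of $g$, whereas $|(-\Delta)^{1/2}u(x)|^2=|(-\Delta)^{1/4}g(x)|^2$ is the pointwise square of the fractional Laplacian of $g$. These agree only after integration over the \emph{whole} circle; the identity $(-\Delta)^{1/2}=\div_{1/2}d_{1/2}$ is a global (weak) identity and does not localise to $B_R$. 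A function $g$ that is $1/4$-harmonic on $B_R$ makes the right-hand side vanish while the left-hand side need not.

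The fix is precisely the route you discarded. Write $\|\eta g\|_{\dot H^{1/2}}=\|(-\Delta)^{1/4}(\eta g)\|_{L^2}$ on the Fourier side and commute:
\[
(-\Delta)^{1/4}(\eta g)=\eta\,(-\Delta)^{1/4}g+[(-\Delta)^{1/4},\eta]\,g=\eta\,(-\Delta)^{1/2}u+[(-\Delta)^{1/4},\eta]\,g.
\]
The first piece gives exactly $\|\eta(-\Delta)^{1/2}u\|_{L^2}^2\le\int_{B_R(x_0)}|(-\Delta)^{1/2}u|^2$, which is the correct local term. Your own Schur estimate $\|[(-\Delta)^{1/4},\eta]\|_{L^2\to L^2}\lesssim R^{-1/2}$ then yields $\|[(-\Delta)^{1/4},\eta]g\|_{L^2}^2\lesssim R^{-1}\|g\|_{L^2(S^1)}^2$, and since $0<R<1$ this is \emph{stronger} than the $R^{-2}\|g\|_{L^2}^2$ you are aiming for --- it is not too lossy at all. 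With this modification the rest of your argument goes through unchanged.
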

	
	Furthermore, due to the orthogonality of the RHS of \eqref{gradflowuvgeneral} with respect to the tangent space of $N$, we also may generalise the following lemmas found in \cite{wettstein}, as the orthogonality is the only property used:
	
	\begin{lem}
	\label{monotone1/2energydecayinglemmageneral}
		Let $u$ be a sufficiently regular solution of the $1/2$-harmonic gradient flow in $N$ as previously defined with $u(0, \cdot) = u_0$ taking values in $N$. Then the following holds for all $T \geq 0$:
		$$\frac{1}{2} \| (-\Delta)^{1/4} u(T) \|_{L^2(S^1)}^{2} \leq \frac{1}{2} \| (-\Delta)^{1/4} u_0 \|_{L^2(S^1)}^{2}$$
		In fact, the energy $T \mapsto \| (-\Delta)^{1/4} u(T) \|_{L^2(S^1)}$ monotonically decreases in $T$.
	\end{lem}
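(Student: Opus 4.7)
The plan is to argue precisely as in Lemma 3.3 of \cite{wettstein}, which only exploits the orthogonality relation $u_t + (-\Delta)^{1/2} u \perp T_u N$ together with the fact that $u_t \in T_u N$ almost everywhere. First I would differentiate the $1/2$-energy in time, justified by the assumed regularity of $u$: this gives
\begin{equation*}
	\frac{d}{dt} E_{1/2}(u(t)) = \int_{S^1} (-\Delta)^{1/4} u \cdot (-\Delta)^{1/4} u_t \, dx = \int_{S^1} (-\Delta)^{1/2} u \cdot u_t \, dx,
\end{equation*}
after integration by parts in the non-local setting (valid since the solution is taken regular enough, e.g.\ in the spirit of \eqref{improvedregularityinh1forgeneral}).

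Next I would exploit that $u(t, \cdot)$ takes values in $N$ for all $t$, which forces $u_t(t,x) \in T_{u(t,x)} N$ at almost every $(t,x)$. Indeed, differentiating $\pi(u(t,x)) = u(t,x)$ in $t$ yields $u_t = d\pi(u) u_t$, confirming that $u_t$ lies in the tangent space. Combining this with the flow equation $u_t + (-\Delta)^{1/2} u \perp T_u N$ then gives
\begin{equation*}
	0 = \int_{S^1} \bigl( u_t + (-\Delta)^{1/2} u \bigr) \cdot u_t \, dx = \int_{S^1} | u_t |^2 \, dx + \int_{S^1} (-\Delta)^{1/2} u \cdot u_t \, dx,
\end{equation*}
so that
\begin{equation*}
	\frac{d}{dt} E_{1/2}(u(t)) = - \int_{S^1} | u_t |^2 \, dx \leq 0.
\end{equation*}
Monotonicity of $T \mapsto \| (-\Delta)^{1/4} u(T) \|_{L^2(S^1)}$ follows, and integration in $t$ from $0$ to $T$ yields the claimed energy inequality.

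I do not expect a genuine obstacle here: the entire argument is driven by orthogonality, exactly as flagged in the paragraph introducing the lemma. The only delicate point is to justify the interchange of $d/dt$ with the integral and the integration by parts producing $(-\Delta)^{1/2} u \cdot u_t$, which is permitted under the ``sufficiently regular'' hypothesis (e.g.\ $u \in L^2_{loc}(\R_+; H^1(S^1))$ together with $u_t \in L^2(\R_+; L^2(S^1))$ from \eqref{regassgeneral}). Once this is in place, the identification of $u_t$ as a tangential vector field and the use of the orthogonality condition close the argument immediately.
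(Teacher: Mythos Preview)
Your argument is correct and is precisely the approach the paper intends: it does not give its own proof but defers to Lemma~3.3 of \cite{wettstein}, stressing that only the orthogonality $u_t + (-\Delta)^{1/2} u \perp T_u N$ together with $u_t \in T_u N$ is used. Your derivation of $\frac{d}{dt} E_{1/2}(u(t)) = -\int_{S^1} |u_t|^2\,dx \le 0$ via these two facts is exactly that argument.
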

	
	As in \cite{struwe1}, we may introduce for $0 < R < 1$ and $t \in [0,T]$:
	\begin{equation}
		E_{R}(u;x,t) := \frac{1}{2} \int_{B_{R}(x)} | (-\Delta)^{1/4} u(t) |^2 dx,
	\end{equation}
	for the local energy and also:
	\begin{equation}
		\varepsilon(R) = \varepsilon(R;u,T) := \sup_{x \in S^1, t \in [0,T]} E_{R}(u;x,t)
	\end{equation}
	The local energy estimate from \cite{struwe1} and \cite{wettstein} continues to hold by the same proof:
	
	\begin{lem}
	\label{struwelemma3.6general}
		There exists a constant $C > 0$ such that for every $u: [0,T] \times S^1 \to N$ in $H^{1}([0,T] \times S^1) \cap L^{\infty}([0,T]; \dot{H}^{1/2}(S^1))$ solving the half-harmonic flow equation \eqref{gradflowuvgeneral} and satisfying the energy decrease property as in Lemma \ref{monotone1/2energydecayinglemmageneral}, any $0 < R < 1/2$ and $(t,x_0) \in [0,T] \times S^1$, the following estimate holds:
		\begin{align}
		\label{struweest03}
			E_{R}(u;x_0,t) 	&\leq E_{2R}(u;x_0,0) + C \left( \frac{t}{R^2} E(u_0) + \frac{\sqrt{t}}{R} \sqrt{\varepsilon(2R) E(u_0)} \right) \notag \\
						&\leq E_{2R}(u;x_0,0) + C \left( \frac{t}{R^2} + \frac{\sqrt{t}}{R} \right) E(u_0),
		\end{align}
		where $E(u_0) = E_{1/2}(u_0)$. In the second inequality, we used the trivial estimate between the local energy and the global one under the energy decay.
	\end{lem}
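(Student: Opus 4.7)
The strategy is a non-local adaptation of Struwe's local energy monotonicity argument from \cite{struwe1}, already carried out for $N = S^{n-1}$ in \cite{wettstein}; as hinted in the statement, the only geometric input is the orthogonality of the right-hand side of \eqref{gradflowuvgeneral} to $T_u N$, so the proof is insensitive to the specific target. Pick a cutoff $\varphi \in C^{\infty}(S^1)$ with $0 \leq \varphi \leq 1$, $\varphi \equiv 1$ on $B_R(x_0)$, $\operatorname{supp} \varphi \subset B_{3R/2}(x_0)$, and $\|\nabla \varphi\|_{L^\infty} \lesssim R^{-1}$, and track the localized fractional energy
\[
\mathcal{E}(t) := \frac{1}{2}\int_{S^1} \varphi^2 \, |(-\Delta)^{1/4} u(t)|^2 \, dx,
\]
which bounds $E_R(u;x_0,t)$ from below and, at $t = 0$, is bounded from above by $E_{2R}(u;x_0,0)$ plus a non-local tail.

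Differentiating in $t$ and using the duality between $(-\Delta)^{1/2}$ and $d_{1/2}$, I would rewrite $\frac{d}{dt}\mathcal{E}$ via the fractional Leibniz rule from Section 2 as the sum of a diagonal pairing of $\varphi^2 u_t$ against $(-\Delta)^{1/2} u$ and a commutator-type piece involving $d_{1/2}(\varphi^2)$. Substituting the flow equation $u_t + (-\Delta)^{1/2} u = N_u$ into the first one and using that $\varphi^2 u_t \in T_u N$ a.e.\ while $N_u \perp T_u N$, the contribution of the nonlinearity vanishes, leaving only the non-positive dissipation $-\int \varphi^2 |u_t|^2 \, dx$, which is discarded in the upper bound.

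The commutator piece is the technical core. Using $|d_{1/2}(\varphi^2)(x,y)| \lesssim R^{-1} |x-y|^{1/2}$ on its effective support and the global dissipation bound $\int_0^t \|u_t\|_{L^2}^2 \, d\tau \lesssim E(u_0)$ provided by Lemma \ref{monotone1/2energydecayinglemmageneral}, two distinct couplings arise. A plain Cauchy-Schwarz pairing of $\varphi u_t$ with a weighted $L^2$-norm of $d_{1/2} u$ yields, after Cauchy-Schwarz in time, the $\frac{t}{R^2} E(u_0)$ remainder. A finer Hölder pairing of $\varphi u_t$ against an $L^4$-norm of $d_{1/2} u$ restricted to $B_{2R}(x_0)$, combined with the local $L^4$ estimate of Lemma \ref{struwelemma3.1general} for $(-\Delta)^{1/4} u$ on $B_{2R}(x_0)$ (producing a factor proportional to $\varepsilon(2R) \, E(u_0)$), delivers the mixed term $\frac{\sqrt{t}}{R}\sqrt{\varepsilon(2R) E(u_0)}$.

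The main obstacle is the non-locality of $(-\Delta)^{1/4}$: neither $\varphi^2$ nor the ball $B_R(x_0)$ can be handled by pointwise integration by parts, and the localization inevitably produces long-range tail terms that must be carefully absorbed into the stated $\frac{t}{R^2} E(u_0)$ remainder. Once these tails are controlled and $\mathcal{E}(t)$ is compared to $E_R(u;x_0,t)$ at time $t$ and $\mathcal{E}(0)$ to $E_{2R}(u;x_0,0)$ at $t=0$, integration in time yields \eqref{struweest03}; the second (simpler) inequality then follows from the trivial bound $\varepsilon(2R) \leq E(u_0)$ granted by the global monotonicity of the energy.
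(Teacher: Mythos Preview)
Your proposal is essentially correct and follows the same approach that the paper has in mind: the paper itself gives no details and simply refers to \cite{wettstein}, noting only that ``the orthogonality of the RHS in \eqref{gradflowuvgeneral} to the tangent space of $N$ removes the non-linearity in the computations,'' which is exactly the mechanism you isolate when pairing $\varphi^2 u_t \in T_u N$ against $N_u \perp T_u N$.

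One small simplification: you do not actually need Lemma~\ref{struwelemma3.1general} to produce the mixed term $\frac{\sqrt{t}}{R}\sqrt{\varepsilon(2R)E(u_0)}$. After the commutator piece is reduced to something of size $R^{-1}$ times a pairing of $u_t$ against $(-\Delta)^{1/4}u$ localized to $B_{2R}(x_0)$, a direct Cauchy--Schwarz in space followed by Cauchy--Schwarz in time already gives
\[
R^{-1}\Big(\int_0^t \|u_t\|_{L^2}^2\Big)^{1/2}\Big(\int_0^t\!\!\int_{B_{2R}}|(-\Delta)^{1/4}u|^2\Big)^{1/2}
\;\lesssim\; R^{-1}\sqrt{E(u_0)}\cdot\sqrt{t\,\varepsilon(2R)},
\]
so the $L^4$ Ladyzhenskaya-type estimate is not needed at this stage. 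Also, since $\operatorname{supp}\varphi\subset B_{3R/2}(x_0)\subset B_{2R}(x_0)$, the bound $\mathcal{E}(0)\le E_{2R}(u;x_0,0)$ holds on the nose, without any non-local tail correction.
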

	
	Again, the proof is referred to \cite{wettstein}, there are no real differences as the orthogonality of the RHS in \eqref{gradflowuvgeneral} to the tangent space of $N$ removes the non-linearity in the computations.\\
	
	It therefore remains to verify the following results as in \cite{struwe1}:
	
	\begin{lem}
	The following generalisations of the results in \cite{struwe1} hold true:
		\begin{enumerate}
			\item Lemma 3.7 in \cite{struwe1}: There exists $\varepsilon_1 > 0$ such that for any $u \in H^{1}([0,T] \times S^1) \cap L^{\infty}([0,T]; H^{1/2}(S^1))$ solving \eqref{gradflowuvgeneral} with values in $N$ and any $R < 1/2$, there holds:
			\begin{equation}
				\int_{0}^{T} \int_{S^1} | \nabla u |^2 dx dt \leq C E(u_0) \left( 1 + \frac{T}{R^3} \right),
			\end{equation}
			with $C$ independent of $u, T, R$, provided $\varepsilon(R) < \varepsilon_{1}$. Here, $u(0, \cdot) = u_0$ is the initial value.
			
			\item Lemma 3.8, Remark 3.9 in \cite{struwe1}: For any numbers $\varepsilon, \tau, E_0 > 0$ and $R_1 < 1/2$, there is a $\delta > 0$ such that for any $u$, satisfying the conditions as in 1., solving \eqref{gradflowuvgeneral} with values in $N$ and any $I \subset [\tau, T]$ with measure $| I | < \delta$, there holds:
			\begin{equation}
				\int_{I} \int_{S^1} | (-\Delta)^{1/4} u |^2 dx dt < \varepsilon,
			\end{equation}
			provided $\varepsilon(R_1) < \varepsilon_1, E(u_0) \leq E_0$. The same holds with $\tau = 0$, if we consider a sequence $u_n$ associated with converging initial data $u_n(0)$ in $H^{1/2}(S^1)$.
			
			\item Lemma 3.10, Remark 3.11 in \cite{struwe1}: Let $u$ be, in addition to the assumptions in 1., a $C^2([\tau, T] \times S^1)$-solution to \eqref{gradflowuvgeneral}, then, for every $1 \leq p < +\infty$, there exists a $L^{p}([\tau, T] \times S^1)$-bound on $u_t + (-\Delta)^{1/2}u$ with a constant only depending on $E(u_0), \tau, T$ and $R$, provided $\varepsilon( R ) < \varepsilon_1$. Here, $\tau > 0$ in general and $\tau \geq 0$ in case $u_0$ is smooth.
		\end{enumerate}
	\end{lem}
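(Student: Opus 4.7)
The plan is to follow Struwe's arguments from \cite{struwe1}, adapted to the non-local setting as in \cite{wettstein} for $N = S^{n-1}$. The one genuinely new input compared with \cite{wettstein} is the integrated Triebel-Lizorkin estimate of Lemma \ref{refr1}, which allows the fractional divergence term in \eqref{gradflowuvgeneral} to be controlled in the same way as its hypersurface analogue from Section 3.

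For part 1, I would first test \eqref{gradflowuvgeneral} against $u_t$ and integrate over $[0,T] \times S^1$. The orthogonality of the right-hand side to $T_u N$, combined with $u_t \in T_u N$ a.e., yields the energy identity $\int_0^T \|u_t\|_{L^2}^2 dt + E_{1/2}(u(T)) = E_{1/2}(u_0)$, hence $\|u_t\|_{L^2([0,T] \times S^1)}^2 \leq 2 E(u_0)$. Calling $r(u)$ the right-hand side of \eqref{gradflowuvgeneral} and invoking Lemma \ref{refr1} pointwise in $t$ with $v = w = u$, one gets $\|r(u)(t)\|_{L^2}^2 \lesssim \|u(t)\|_{\dot{F}^{1/2}_{4,2}}^4 \sim \int_{S^1} |(-\Delta)^{1/4} u(t)|^4 dx$, so that
\[
\int_0^T \|(-\Delta)^{1/2} u\|_{L^2}^2 dt \leq 4 E(u_0) + C \int_0^T \int_{S^1} |(-\Delta)^{1/4} u|^4 dx dt.
\]
Feeding in the Struwe-type estimate \eqref{struweest02} produces a factor $\varepsilon(R)$ multiplying $\int_0^T \|(-\Delta)^{1/2} u\|_{L^2}^2 dt$ on the right-hand side; choosing $\varepsilon_1$ small enough to absorb it to the left yields the desired bound $CE(u_0)(1 + T/R^3)$, after converting $\|(-\Delta)^{1/2}u\|_{L^2}$ to $\|\nabla u\|_{L^2}$ up to a zero-order term.

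Part 2 then follows by H\"older's inequality combined with \eqref{struweest02}: for measurable $I \subset [\tau, T]$,
\[
\int_I \int_{S^1} |(-\Delta)^{1/4} u|^2 dx dt \leq (2\pi |I|)^{1/2} \left( \int_0^T \int_{S^1} |(-\Delta)^{1/4} u|^4 dx dt \right)^{1/2} \leq C(E_0, R_1, T) |I|^{1/2},
\]
so one chooses $\delta := (\varepsilon / C)^{2}$. For $\tau = 0$ with a sequence $u_n$ satisfying $u_n(0) \to u_0$ in $H^{1/2}$, the local energy estimate \eqref{struweest03} provides equicontinuity of $E_R(u_n; \cdot, t)$ uniformly in $n$, and the argument transfers without change. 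For part 3, the starting point is $r(u) \in L^2([\tau, T] \times S^1)$ already established; a standard parabolic bootstrap using Proposition \ref{lemmae.2daliopigati}, the commutator identity \eqref{alternativeformofcommutator}, and the $L^p$-maximal regularity result of \cite[Theorem 3.1]{hieber} upgrades this to $L^p$-integrability of $u_t + (-\Delta)^{1/2} u$ for every $1 \leq p < \infty$, very much as in the final bootstrap in the proof of Proposition \ref{exreg:proplocalreg}. The case $\tau = 0$ for smooth $u_0$ causes no additional trouble because the initial datum provides uniform smoothness at $t = 0$.

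The main obstacle, and the reason the argument does not reduce to a verbatim repetition of \cite{wettstein}, is the fractional divergence term in \eqref{gradflowuvgeneral}, for which no pointwise bound of the shape $|r(u)| \lesssim |d_{1/2} u|^2$ is available. Its treatment has to go through the integrated estimate \eqref{estr1reg} of Lemma \ref{refr1}, which is exactly what makes the absorption step in part 1 succeed; once part 1 is in hand, parts 2 and 3 are essentially formal consequences.
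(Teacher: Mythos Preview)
Your overall strategy is correct and would succeed, but it differs from the paper's in one important respect, and your final paragraph contains a misconception.

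You treat the fractional divergence term in \eqref{gradflowuvgeneral} via the integrated $L^2$-estimate \eqref{estr1reg} of Lemma~\ref{refr1}, asserting that no pointwise bound $|r(u)| \lesssim |d_{1/2}u|^2$ is available. The paper instead switches to the equivalent formulation \eqref{umformulierungenmitquadratischerform}, where the non-linearity is the principal-value integral with bounded kernel $P^{kl}(u(x),u(y))$, and this does give the pointwise quadratic bound
\[
\left| \sum_{k,l} P.V. \int_{S^1} P^{kl}(u(x),u(y))\, d_{1/2}u_k(x,y)\, d_{1/2}u_l(x,y)\, \frac{dy}{|x-y|} \right| \lesssim |d_{1/2}u|^2(x).
\]
With this in hand, the proofs of all three parts become verbatim transcriptions of the $S^{n-1}$ case in \cite{wettstein}, which is exactly what the paper does. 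So the ``main obstacle'' you identify is not actually present: the divergence term is an artefact of the particular formulation \eqref{gradflowuvgeneral}, and disappears once one passes to \eqref{umformulierungenmitquadratischerform}.

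Your route via Lemma~\ref{refr1} still works, since for part~1 only the $L^2(S^1)$-norm of $r(u)$ is needed and \eqref{estr1reg} supplies precisely that; the absorption argument then goes through as you describe. (Note, though, that Lemma~\ref{refr1} alone treats only the divergence piece; the term $d_{1/2}u \cdot d_{1/2}(d\pi^{\perp}(u))$ still requires the $B_u$ bound from the uniqueness proof, which is pointwise anyway.) The paper's approach is cleaner because it avoids splitting the non-linearity and appealing to the Triebel--Lizorkin machinery at this stage; your approach has the minor advantage of staying within a single formulation throughout. Parts~2 and~3 are handled essentially identically in both approaches.
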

	
	The proof is analogous to \cite{struwe1}, we merely rely on the quadratic estimate for the non-linearity given by:
	$$\left| \sum_{k,l = 1}^{n} P.V. \int_{S^1} a^{kl}(u(x),u(y)) d_{1/2} u_k (x,y) d_{1/2} u_l (x,y) \frac{dy}{| x-y |} \right| \lesssim | d_{1/2} u |^2(x)$$
	Therefore, as in \cite{wettstein}, we refer to \cite{struwe1}, as the proofs are obvious modifications of Struwe's techniques and the previously presented bootstrap procedure for solutions to fractional heat-type equations. Arguing as in Theorem 4.1 in \cite{struwe1}, we may also deduce that for sufficiently small energy at time $t=0$, global existence is ensured. Otherwise, blow-ups may occur.

	\subsection{Convergence of Solutions as $t \to + \infty$}
	
	If we look at the proof of \cite[Theorem 3.4]{wettstein}, it is clear that the arguments immediately generalises to the following Theorem by the same proof:
	
	\begin{thm}
		Let $u \in L^{2}(\R_{+}; H^{1/2}(S^1))$ and $u_t \in L^{2}(\R_{+}; L^{2}(S^1))$ be a solution of the fractional harmonic gradient flow \eqref{gradflowuvgeneral} with values in a closed manifold $N \subset \R^{n}$ and with initial data $u_0 \in H^{1/2}(S^{1};N)$. Assume that:
		$$\| (-\Delta)^{1/4} u(t) \|_{L^2} \leq \| (-\Delta)^{1/4} u_0 \|_{L^2} \leq \varepsilon, \quad \forall t \in \R_{+},$$ 
		for $\varepsilon > 0$ sufficiently small. Then, for a suitably chosen subsequence $t_k \to +\infty$, the sequence of maps $(u(t_k, \cdot))_{k \in \mathbb{N}} \subset H^{1}(S^1;N)$ converges weakly in $H^{1}(S^1)$ to a $1/2$-harmonic map in $N$.
	\end{thm}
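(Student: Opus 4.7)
The plan is to imitate the argument of \cite[Theorem 3.4]{wettstein} essentially verbatim, since no feature specific to $S^{n-1}$ was used there beyond the structural properties of the right-hand side (orthogonality to $T_u N$ and pointwise quadratic bound by $|d_{1/2}u|^2$), both of which hold for the nonlinearity in \eqref{gradflowuvgeneral}.

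First, I would exploit the integrability $u_t \in L^2(\R_+; L^2(S^1))$: by Fubini, $\int_0^\infty \|u_t(t)\|_{L^2}^2\,dt < \infty$, so we can select a sequence $t_k \to \infty$ with $\|u_t(t_k)\|_{L^2(S^1)} \to 0$. At each such $t_k$, fixing the time variable, the map $u(t_k)$ satisfies an equation of the form considered in Corollary \ref{mainreggeneral} with inhomogeneity $f_k := -u_t(t_k)$ bounded in $L^2$, and takes values in the compact set $N$, hence $u(t_k) \in L^\infty(S^1)$ with a bound independent of $k$.

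Second, I would invoke the regularity chain Lemma \ref{casepe2}-Lemma \ref{casepg2}-Corollary \ref{mainreggeneral} at each time $t_k$ to obtain $u(t_k) \in \dot F^{1/2}_{4,2}(S^1)$, and then rerun the absorption argument carried out just before \eqref{improvedregularityinh1forgeneral}: using the smallness of the energy $\|(-\Delta)^{1/4} u(t_k)\|_{L^2} \le \varepsilon$ we absorb the quadratic fractional-gradient contribution on the left and obtain the uniform estimate
\begin{equation*}
\|u(t_k)\|_{H^1(S^1)} \;\lesssim\; 1 + \|u_t(t_k)\|_{L^2(S^1)} \;\lesssim\; 1,
\end{equation*}
provided $\varepsilon>0$ is chosen sufficiently small. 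By weak compactness we may pass, up to extracting a further subsequence, to a limit $u(t_k) \rightharpoonup u_\infty$ weakly in $H^1(S^1)$, and by Rellich the convergence is strong in $C^0(S^1)$ and in $H^s(S^1)$ for every $s<1$. The uniform convergence implies $u_\infty(x) \in N$ for every $x$.

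Finally, I would pass to the limit in the equation. The term $u_t(t_k) \to 0$ strongly in $L^2$ by construction, while $(-\Delta)^{1/2} u(t_k) \rightharpoonup (-\Delta)^{1/2} u_\infty$ weakly in $L^2$. For the nonlinear terms $d_{1/2} u(t_k) \cdot d_{1/2}(d\pi^\perp(u(t_k)))$ and the fractional divergence summand, the continuity estimates of Lemma \ref{refr1} and Lemma \ref{refr2} together with the strong convergence of $u(t_k)$ in $H^s$ for $s<1$ close to $1$ (and the smoothness of $\pi$) allow one to identify the limit as the corresponding non-linear expression evaluated at $u_\infty$. Thus $u_\infty$ satisfies the $1/2$-harmonic map equation in $N$. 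The main obstacle is precisely this last passage to the limit in the non-local bilinear and divergence structures, but it is resolved exactly by the Triebel-Lizorkin continuity provided by Lemmas \ref{refr1}-\ref{refr2} together with the $H^1$ compactness, so no new analytic input beyond what is already developed in Section 4.1 and in \cite{wettstein} is needed.
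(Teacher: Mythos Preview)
Your proposal is correct and follows exactly the approach the paper intends: the paper gives no independent proof here but simply refers to \cite[Theorem 3.4]{wettstein}, and you have faithfully reconstructed that argument (selection of $t_k$ with $\|u_t(t_k)\|_{L^2}\to 0$, uniform $H^1$-bound via Corollary \ref{mainreggeneral} and the small-energy absorption leading to \eqref{improvedregularityinh1forgeneral}, weak $H^1$-compactness, and passage to the limit in the nonlinearity using the Triebel--Lizorkin continuity of Lemmas \ref{refr1}--\ref{refr2}). Nothing beyond what you outline is needed.
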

	
	We refer to the proof in \cite{wettstein} for details. Again, for sufficiently small $\varepsilon > 0$, we may even deduce that the limit function is a constant map to some point in $N$.

	\begin{appendices}

	\section{Morrey Regularity and Increased Integrability as in \cite{daliopigati}}
	
	In this appendix, we briefly go into some more details of the proof of Lemma \ref{casepe2}. We recall that in the paper, we referred to \cite{daliopigati}, in particular Theorem D.7 and Corollary D.8. Let us expand upon this:\\
	
	We assume that $u$ solves the following equation:
	\begin{equation}
	\label{maineqforappendixc}
		(-\Delta)^{1/2} u = d_{1/2} u \cdot d_{1/2} \left( d\pi^{\perp}(u) \right) + \div_{1/2} \left( \frac{A^{i}_{u}(du, du)(x,y)}{| x-y |^{1/2}} d\pi^{\perp}(u(y))_{ij} \right) + f,
	\end{equation}
	then we notice that:
	$$\int_{S^1} d_{1/2} u \cdot d_{1/2} \left( d \pi(u) \varphi \right) dx = \int_{S^{1}} f \varphi dx,$$
	by arguing as in Section 5.1.2. Therefore, this shows:
	$$d\pi(u) (-\Delta)^{1/2} u = d\pi(u) f$$
	We shall sometimes write $d\pi$ instead of $d\pi(u)$ and $d\pi^{\perp}$ instead of $d\pi^{\perp}(u)$, implying the appropriate functions to be inserted.\\

	If we define $w = u \circ \Pi^{-1}$ using the stereographic projection as in \cite{daliopigati}, this becomes:
	$$d\pi(w) (-\Delta)^{1/2} w = d\pi(w) \tilde{f},$$
	and therefore:
	\begin{equation}
		(-\Delta)^{1/2} w = d\pi(w) \tilde{f} + d\pi^{\perp} (w) (-\Delta)^{1/2}w,
	\end{equation}
	where:
	$$\tilde{f} = \frac{2}{1+x^2} f \circ \Pi^{-1}.$$
	We let:
	\begin{equation}
		v := (-\Delta)^{1/4} w,
	\end{equation}
	and by following precisely the arguments as in \cite{daliopigati} on p.31-32, we find:
	\begin{equation}
	\label{appendixceq01}
		(-\Delta)^{1/4} v = \Omega_{0} v + \Omega_1 v + (-\Delta)^{1/4} \left( d\pi^{\perp} v \right) + 2 (-\Delta)^{1/4} d\pi^{\perp} \cdot d\pi^{\perp} v - T(d\pi^{\perp}, v) + \tilde{f}
	\end{equation}
	Here, $\Omega_0 := d\pi^{\perp} (-\Delta)^{1/4} d\pi^{\perp} - (-\Delta)^{1/4} d\pi^{\perp} d\pi^{\perp}, \Omega_1 := T^{\ast}(d\pi^{\perp}, d\pi)$ and $T$ are the same objects as defined in \cite{daliopigati}, in particular $T$ and $T^{\ast}$ are the following commutators:
	\begin{align}
		T(Q,v) 		&:= (-\Delta)^{1/4} (Qv) + (-\Delta)^{1/4} Q \cdot v - Q (-\Delta)^{1/4} v \\
		T^{\ast}(P,Q) 	&:= (-\Delta)^{1/4} (PQ) - (-\Delta)^{1/4} P \cdot Q - P (-\Delta)^{1/4} Q,
	\end{align}
	satisfying the estimates due to compensation properties of $T, T^{\ast}$:
	\begin{align}
		\| T(Q, v ) \|_{\mathcal{H}^{1}(\R;\R^{m})} 			&\lesssim \| Q \|_{\dot{H}^{1/2}(\R;\R^{m\times m})} \| v \|_{L^2(\R;\R^{m})} \\
		\| T^{\ast}(P,Q) \|_{L^{2,1}(\R, \R^{m \times m})} 	&\lesssim \| P \|_{\dot{H}^{1/2}(\R, \R^{m \times m})} \| Q \|_{\dot{H}^{1/2}(\R, \R^{m \times m})},
	\end{align}
	for $P, Q \in \dot{H}^{1/2}(\R; \R^{m \times m}) \cap L^{\infty}(\R)$ and $v \in L^{2}(\R;\R^m)$. See Appendix C in \cite{daliopigati} for further details. Due to the similar structure of the equation, it is not surprising that the following holds:
	
	\begin{thm}[Theorem D.7, \cite{daliopigati}]
	\label{appendixcthm1}
		The map $v = (-\Delta)^{1/4} w$ has $(-\Delta)^{1/4}(d\pi v), \mathcal{R}(-\Delta)^{1/4}(d\pi^{\perp} v) \in L^{1}(\R)$ and there exists $\alpha > 0$, such that:
		$$\| (-\Delta)^{1/4}(d\pi v ) \|_{L^{1}(B_{r}(x_0))} + \| \mathcal{R} (-\Delta)^{1/4}(d\pi^{\perp} v ) \|_{L^{1}(B_{r}(x_0))} \lesssim r^{\alpha},$$
		for all $r > 0$ and uniformly in $x_0 \in \R$.
	\end{thm}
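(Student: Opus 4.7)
The plan is to follow the approach of \cite[Theorem D.7]{daliopigati} closely, tracking the effect of the additional perturbation $\tilde{f}$ compared to the source-free case treated there. The starting point is the equation \eqref{appendixceq01} satisfied by $v = (-\Delta)^{1/4} w$. Since $f \in L^2(S^1)$, a direct computation using the stereographic projection shows that $\tilde{f} \in L^q(\R)$ for every $q \in [1,2]$, and in particular $\| \tilde{f} \|_{L^1(B_r(x_0))} \leq \| \tilde{f} \|_{L^2(\R)} \cdot r^{1/2}$ uniformly in $x_0 \in \R$. Thus the new term already possesses Morrey decay of the required form and will not be responsible for any real difficulty.

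The core analytic mechanism is the compensation structure built into \eqref{appendixceq01}. The commutator $T(d\pi^\perp, v)$ lies in the Hardy space $\mathcal{H}^1(\R)$, while the anti-symmetric potential $\Omega_0$ together with $T^\ast(d\pi^\perp, d\pi)$ produces $\Omega_0 v + \Omega_1 v$ lying in the Lorentz space $L^{2,1}(\R)$, in each case using the small energy assumption to absorb the $\dot{H}^{1/2}$ norms of $d\pi^\perp$ by a small constant. I would next project the equation by applying $d\pi$ and $d\pi^\perp$ separately, using the orthogonality $d\pi \cdot d\pi^\perp = 0$; this yields equations for $(-\Delta)^{1/4}(d\pi v)$ and $(-\Delta)^{1/4}(d\pi^\perp v)$ modulo commutator errors that are controlled by the aforementioned compensation estimates plus the harmless contribution of $\tilde{f}$.

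To produce the Morrey decay, I would run the hole-filling iteration of \cite{daliopigati}: localize on a ball $B_r(x_0)$ by means of a smooth cutoff, use the embeddings $\mathcal{H}^1 \hookrightarrow L^1$ locally and $L^{2,1} \hookrightarrow L^2$ to turn the compensation estimates into bounds on the relevant Morrey norm over $B_r(x_0)$, and estimate the bilinear contributions by $C\varepsilon$ times the Morrey norm on a slightly larger ball $B_{2r}(x_0)$, plus a term of size $r^\alpha$ coming from $\tilde{f}$ and from the non-local tails produced when commuting the fractional Laplacian with the cutoff. Provided $\varepsilon > 0$ is small enough so that $C\varepsilon < 1$, iterating this estimate over a dyadic sequence of scales produces the claimed decay $r^\alpha$ for some $\alpha > 0$ depending on the constants. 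Applying $\mathcal{R}$, which is bounded on the relevant Hardy and Lorentz spaces, to the estimate for the $d\pi^\perp$ component then yields the stated bound on $\mathcal{R}(-\Delta)^{1/4}(d\pi^\perp v)$.

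The main obstacle is ensuring that the Morrey iteration actually closes in the presence of both the non-local dyadic tails generated by $(-\Delta)^{1/4}$ acting on cutoffs and the perturbation $\tilde{f}$. The tail bounds are handled by the standard kernel estimates for $(-\Delta)^{1/4}$ combined with H\"older's inequality, while the perturbation only contributes the harmless $r^{1/2}$ factor noted above; the delicate point is to select the cutoff, the scale ratios, and the exponent $\alpha$ compatibly so that the compensation estimates can be iterated without loss, exactly as in \cite[Theorem D.7]{daliopigati}.
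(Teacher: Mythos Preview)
Your outline captures the spirit of the argument---track the extra source $\tilde f$ through the Morrey iteration of \cite[Theorem~D.7]{daliopigati}---but it omits the step that actually makes the iteration close. In \eqref{appendixceq01} the term $(-\Delta)^{1/4}(d\pi^{\perp} v)$ on the right-hand side is of the \emph{same} order as $(-\Delta)^{1/4} v$ on the left; it is not a commutator and cannot be absorbed by smallness. The paper (following \cite{daliopigati}) removes this term via the \emph{change of gauge}: one conjugates by an invertible matrix field $Q$ produced from the anti-symmetric potential, and only after this Step~1 does the equation take a form in which every right-hand side contribution is either a genuine commutator ($T$, $T^{\ast}$, $\Omega_0$, $\Omega_1$) or the source. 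Your ``project by $d\pi$ and $d\pi^{\perp}$'' step does not accomplish this: projecting by $d\pi^{\perp}$ makes the leading terms on both sides cancel and leaves only an identity between commutators, so no useful equation for $(-\Delta)^{1/4}(d\pi^{\perp} v)$ remains. Relatedly, you invoke a ``small energy assumption'' to make $C\varepsilon<1$, but no such hypothesis is present in the statement; in \cite{daliopigati} smallness is obtained only \emph{locally} after the gauge change and a localisation, and this is precisely what the paper means by ``the change of gauge argument and localisation estimates work equally well.''

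There is also a difference in how the source is handled. Rather than estimating $\tilde f$ directly in $L^1(B_r)$, the paper first solves $(-\Delta)^{1/2} w_0 = Q\circ\Pi\cdot f$ on $S^1$, pulls back to $\tilde w_0$ on $\R$, and replaces the source by $(-\Delta)^{1/4}\tilde w_0$. Because $w_0\in H^1(S^1)$, one gets $(-\Delta)^{1/4}\tilde w_0\in L^q(\R)$ for every $q\in[2,\infty)$, which yields $\|(-\Delta)^{1/4}\tilde w_0\|_{L^2(B_r(x_0))}\lesssim r^{\beta}$ for any $\beta<1/2$. This slots directly into Step~2 of \cite{daliopigati} and produces the $L^{2,\infty}$-Morrey decay of $v$, from which Steps~3--4 (Adams embedding, then the $L^1$-Morrey estimate via commutators) follow unchanged. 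Your direct bound $\|\tilde f\|_{L^1(B_r)}\lesssim r^{1/2}$ is correct but enters at the wrong stage: the iteration in \cite{daliopigati} is run on $\|v\|_{L^{2,\infty}(B_r)}$ first, not on the final $L^1$ quantity, so the source needs to be packaged at the $L^2$ level after the gauge.
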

	
	\begin{proof}
		The change of gauge argument and localisation estimates work equally well in the case of our new equation \eqref{appendixceq01}. Thus, Step 1 carries over word by word. In Step 2, we just need to change the estimate slightly to account for $\tilde{f}$. Namely, we replace $(-\Delta)^{1/4}(Qh)$ in \cite{daliopigati} immediately by $(-\Delta)^{1/4} \tilde{w}_{0}$ with $\tilde{w}_{0}$ being the pullback under the stereographic projection of $w_0$ which solves:
		$$(-\Delta)^{1/2} w_0 = Q\circ \Pi \cdot f,$$
		with $Q$ the gauge from \cite{daliopigati}. The argument proceeds as outlined in section 5 and shows that $(-\Delta)^{1/4} \tilde{w}_{0}$ lies in all $L^{q}(\R)$, for $2 \leq q < \infty$, by Hardy-Littlewood-Sobolev inequality. Therefore, we insert $(-\Delta)^{1/4} \tilde{w}_{0}$, obtaining the same expression as on the bottom of p.32 of \cite{daliopigati} and by using H\"older's inequality to estimate:
		$$\| (-\Delta)^{1/4} \tilde{w}_{0} \|_{L^{2}(B_{r}(x_0))} \lesssim r^{\beta},$$
		for any $\beta \in ]0, 1/2[$ and the estimates on p.33-34 in \cite{daliopigati}, one may deduce completely analogous for some $0 < \gamma < 1/4$:
		$$\| v \|_{L^{2,\infty}(B_{r}(x_{0}))} \lesssim r^{\gamma},$$
		for all $r > 0$ and $x_0$. Thus, Step 2 of the proof of Theorem D.7 still applies.\\
		
		The remainder of the proof of Theorem D.7 in \cite{daliopigati} can now be generalized as well. The application of Adams' embedding is immediate, the $L^{2}$-Morrey decay of $v$ can be obtained by the same trick and due to this, Step 3 holds. Finally, Step 4 and thus the conclusion of the proof of Theorem \ref{appendixcthm1} follow completely analogous by commutator estimates.
	\end{proof}
	
	Looking at Corollary D.8 in \cite{daliopigati} and its proof reveals that the local integrability $(-\Delta)^{1/4} w \in L^{p}_{loc}(\R)$ follows immediately by the same arguments as given there. Therefore, the remainder of the argument in section 5 can be applied and provides the desired gain in integrability.
	
	\end{appendices}

\newpage

\end{document}